\def\tr{\mathrm{tr}}
\def\rk{\mathrm{rk}}
\def\GL{\mathrm{GL}}    
\def\Mat{\mathrm{Mat}}   
\def\I{\mathrm{I}}
\def\PSU{\mathrm{PSU}}
\def\PSL{\mathrm{PSL}}
\def\PSp{\mathrm{PSp}}
\def\SL{\mathrm{SL}}  
\def\SU{\mathrm{SU}}
\def\U{\mathrm{U}}
\def\Sp{\mathrm{Sp}}
\def\SO{\mathrm{SO}}
\def\Sym{\mathrm{Sym}}
\def\Alt{\mathrm{Alt}}
\def\Imm{\mathrm{Im}}
\def\P{{\rm P}}
\def\equad{\quad \textrm{ and } \quad}
\newcommand{\F}{\mathbb{F}}   
\newcommand{\K}{\mathbb{K}}   
\newtheorem{theorem}{Theorem}[section] 
\newtheorem{lemma}[theorem]{Lemma}     
\newtheorem{corollary}[theorem]{Corollary}
\theoremstyle{definition}
\newtheorem{remark}[theorem]{Remark}
\numberwithin{equation}{section}
\begin{document}

\title{The $(2,3)$-generation of the finite $8$-dimensional orthogonal groups}

\author{M.A. Pellegrini and M.C. Tamburini Bellani}
\email{marcoantonio.pellegrini@unicatt.it}
\email{mariaclara.tamburini@gmail.com}

\address{Dipartimento di Matematica e Fisica, Universit\`a Cattolica del Sacro Cuore,\\
Via della Garzetta 48, 25133 Brescia, Italy}

\begin{abstract}
We construct $(2,3)$-generators for the finite $8$-dimensional orthogonal groups,
proving the following results: the groups $\Omega_8^+(q)$ and $\P\Omega_8^+(q)$ are $(2,3)$-generated if and only
if $q\geq 4$;
the groups  $\Omega_8^-(q)$ and $\P\Omega_8^-(q)$ are $(2,3)$-generated for all $q\geq 2$.
\end{abstract}

\keywords{Orthogonal group; simple group; generation}
\subjclass[2010]{20G40, 20F05}

\maketitle

\section{Introduction}

A group is said to be $(2,3)$-generated if it can be generated by an involution and an element of order
$3$. By a famous result of Liebeck and Shalev \cite{LS}, the finite classical simple groups are
$(2,3)$-generated, apart from the two infinite families $\PSp_4(q)$ with $q=2^f,3^f$, and a finite list 
$\mathcal{L}$ of exceptions. 
The problem of determining which are exactly the members of this list $\mathcal{L}$ requires a lot of detailed analysis, 
especially in small dimensions. Thanks to the efforts of many authors (see in particular \cite{SL12,Unit,Sp} and the references within)
it remains  open only for the orthogonal groups. 

According to \cite{Ischia}, the following finite simple classical groups are not $(2,3)$-generated:
\begin{itemize}
\item $\PSp_4(q)\cong \P\Omega_5(q)$ with $q=2^f, 3^f$;
\item $\PSL_2(9)\cong \P\Omega_3(9)\cong \P\Omega_4^-(3)$;
\item $\PSL_3(4)$;
\item $\PSL_4(2)\cong \P\Omega_6^+(2)$;
\item $\PSU_3(q^2)$ with $q=3,5$;
\item $\PSU_4(q^2)\cong \P\Omega_6^-(q)$ with $q=2,3$;
\item $\PSU_5(2^2)$;
\item $\P\Omega_8^+(q)$ with $q=2,3$.
\end{itemize}
The isomorphisms are well known, see \cite{Atlas}. From this list, it turns out that most exceptions are realized by orthogonal groups.
Also for this reason, they are the most difficult case to be studied with respect to the $(2,3)$-generation problem.
The two exceptions $\P\Omega_8^+(2)$ and $\P\Omega_8^+(3)$ have been found by Vsemirnov \cite{Max}, while
in \cite{Sp6} it was proved that the groups $\Omega_7(q)$ are $(2,3)$-generated for all odd $q$.
Note that, for $q$ even, $\Omega_7(q)\cong \Sp_6(q)$. 
It is worth mentioning that all the known  exceptions are $(2,5)$-generated, apart from $\PSU_3(3^2)$ which 
is $(2,7)$-generated. So they are all $(2,r)$-generated, for some small prime $r$.

Here, given a finite $8$-dimensional orthogonal group, we construct plenty of $(2,3)$-generating pairs $x,y$,
where $y$ is fixed and $x$ depends on one or two parameters. In particular, we prove:

\begin{theorem}\label{main}
The groups $\Omega_8^+(q)$ and $\P\Omega_8^+(q)$ are $(2,3)$-generated if and only if $q\geq 4$,
while the groups  $\Omega_8^-(q)$ and $\P\Omega_8^-(q)$  are $(2,3)$-generated for all $q\geq 2$.
\end{theorem}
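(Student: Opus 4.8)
\subsection*{Proof proposal}

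The plan is to produce, for each admissible $q$, an explicit pair consisting of an involution $x$ and an element $y$ of order $3$ lying in $\Omega_8^\eps(q)$, and to prove that $H:=\langle x,y\rangle$ is the whole group; since a quotient of a $(2,3)$-generated group is again $(2,3)$-generated, the statement for $\P\Omega_8^\eps(q)$ then follows at once. Following the strategy that has proved effective for the classical groups treated in \cite{SL12,Unit,Sp}, I would fix once and for all a convenient element $y$ of order $3$ --- for instance one whose rational canonical form is a sum of $2$-dimensional blocks attached to $t^2+t+1$, chosen so that $y$ preserves a quadratic form of type $\eps$ and satisfies the determinant and spinor-norm conditions placing it in $\Omega$ --- and let $x=x(a)$ or $x(a,b)$ run through a family of involutions of $\Omega_8^\eps(q)$ depending on one or two parameters of $\F_q$. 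The freedom in these parameters is what allows a single construction to cover all $q\ge 4$ (resp.\ all $q\ge 2$) at once.

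The core of the argument is to show that $H=\Omega_8^\eps(q)$ by excluding, via Aschbacher's theorem, every maximal subgroup that could contain $H$. First I would check that, for suitable parameter values, $H$ acts absolutely irreducibly on $V=\F_q^8$, typically by verifying that no nonzero proper subspace is simultaneously invariant under $x$ and $y$; this disposes of the reducible members of class $\mathcal{C}_1$. The imprimitive ($\mathcal{C}_2$), field-extension ($\mathcal{C}_3$) and tensor-type ($\mathcal{C}_4,\mathcal{C}_7$) subgroups are ruled out by spectral and order considerations: the eigenvalue pattern of $y$, together with the order or characteristic polynomial of a short word such as $xy$, is incompatible with the block, semilinear or tensor structures available in dimension $8=2\cdot 4=2^3$. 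The subfield subgroups of class $\mathcal{C}_5$ are eliminated precisely by choosing the parameters so that the entries of $x$ and $y$ generate $\F_q$ over the prime field, and the extraspecial normalizer class $\mathcal{C}_6$, present because $8=2^3$, is excluded by an order estimate.

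The genuinely delicate part is the class $\mathcal{S}$ of almost simple irreducible subgroups, which for type $D_4$ is complicated by triality: the maximal subgroups of $\P\Omega_8^+(q)$ include not only the usual geometric families but also the triality-images of the reducible and parabolic subgroups, together with embedded copies of $\Omega_7(q)$ coming from the spin representation, subsystem subgroups, and various small almost simple groups. I expect the main obstacle to lie exactly here: proving, uniformly in $q$, that $H$ is contained in none of these --- in particular that $H$ does not lie in a spin-embedded $\Omega_7(q)$ nor in any of the sporadic candidates appearing in the low-dimensional tables of maximal subgroups. This I would settle by exhibiting elements of $H$, again short words in $x$ and $y$, whose orders or characteristic polynomials do not occur in any proper candidate. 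For the minus type there is no triality, so the $\mathcal{S}$-analysis, while still requiring care, is more standard; on the other hand the very small fields (including $q=2,3$, which here \emph{are} generated) must be included, and the handful of smallest $q$ for which the generic order estimates are too weak would be verified directly by computer.

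Finally, the ``only if'' direction requires no new work: $\P\Omega_8^+(2)$ and $\P\Omega_8^+(3)$ are not $(2,3)$-generated, these being precisely the exceptions found by Vsemirnov \cite{Max} and recorded in the list of the Introduction. By the quotient remark above, the non-generation of $\P\Omega_8^+(q)$ for $q\in\{2,3\}$ forces the same for the covering groups $\Omega_8^+(q)$, which completes both implications.
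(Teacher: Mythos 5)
Your overall architecture --- a fixed $y$ of order $3$, an involution $x$ depending on field parameters, containment in $\Omega_8^\epsilon(q)$, then an Aschbacher-style exclusion of every maximal overgroup, with Vsemirnov's exceptions \cite{Max} and the quotient/cover argument settling the ``only if'' direction and the passage between $\Omega_8^\pm(q)$ and $\P\Omega_8^\pm(q)$ --- coincides with the paper's. The gap sits exactly where you yourself locate the main obstacle, and the tool you propose there is not adequate. For $\Omega_8^+(q)$ the subgroups to be excluded include the tensor subgroups of class $\mathcal{C}_4$ and, in class $\mathcal{S}$, the spin-embedded $\Sp_6(q)$ ($p=2$) or $2^\cdot \Omega_7(q)$ ($p$ odd), as well as $d^\cdot\Omega_8^-(\sqrt{q})$, $d\times {}^3D_4(\sqrt[3]{q})$ and $d\times\PSL_3(q).3$ / $d\times\PSU_3(q^2).3$. ``Orders or characteristic polynomials of short words'' cannot dispose of the spin $\Omega_7(q)$ uniformly in $q$: on the $8$-dimensional spin module the characteristic polynomials of its elements carry no visible constraint, and $\Omega_7(q)$ is so large that no short word in $x,y$ can be shown, for all $q$ at once, to have an order that $2^\cdot\Omega_7(q)$ forbids. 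The paper's key idea, absent from your plan, is to apply the triality automorphism $\tau$ to $H$ itself: by \cite{K} and \cite[Table 8.50]{Ho}, $\tau$ moves the spin $\Omega_7(q)$-subgroups and the relevant $\mathcal{C}_4$-subgroups into class $\mathcal{C}_1$, so it suffices to prove that $\tau(H)$ and $\tau^2(H)$ are absolutely irreducible. Carrying this out forces one to express $x$ and $y$ as words in Steinberg generators (thereby also certifying $H\leq\Omega_8^+(q)$), to compute $\tau(x),\tau(y)$ explicitly, and to run three separate irreducibility arguments; this is the technical heart of the paper (Lemmas \ref{irr+2}--\ref{irr+tau2} and Corollary \ref{Om7}), not a routine verification.

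A second idea you would also need concerns $d\times\PSL_3(q).3$ and $d\times\PSU_3(q^2).3$: these act on $\F_q^8$ via the adjoint (respectively hermitian) module, and the paper shows structurally that \emph{every} element of $\SL_3(q)$ or $\SU_3(q^2)$ centralizes a suitable non-scalar matrix, hence its image always has eigenvalue $1$; it then checks that $\tau^i([x,y])$, $i=0,1,2$, never has eigenvalue $1$ (Remark \ref{poli}, Lemma \ref{LU3}). This is indeed a characteristic-polynomial criterion, but it rests on a property of the embedding, not on comparing which polynomials ``occur'' in the subgroup --- something one could not tabulate uniformly in $q$. Without these two mechanisms (or genuine substitutes), the exclusion of the triality-related and $\mathcal{S}$-class subgroups does not go through, so your plan as written does not close the proof; the remaining steps you outline ($\mathcal{C}_2$, $\mathcal{C}_5$, $\mathcal{C}_6$, the minus type, and the small exceptional cases) do match what the paper does.
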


\section{Notation and preliminary results}

Denote by $\F$ the algebraic closure of the field $\F_p$  of prime order $p$.
For every $q=p^f$, the group $\GL_{8}(q)$  acts on the left on the space $\F_q^{8}$,
whose canonical basis is denoted by $\{e_1,e_2,e_3,e_4,e_{-1},e_{-2},e_{-3},e_{-4}\}$.
Denote by  $\omega$ a primitive cubic root of $1$ in $\F$.

Given $g\in \Mat_{m,n}(q)$, $m\geq n$, we denote by $g_{(i_1,\ldots,i_n)}$ the square submatrix of $g$ 
consisting of rows $i_1,\ldots,i_n$.
Moreover, if $\sigma$ is a field automorphism, we set $g^\sigma$ for the matrix obtained from $g$ applying
$\sigma$ to its entries.

The lists of maximal subgroups of $\Omega_8^+(q)$, obtained by Kleidman \cite{K}, and of $\Omega_8^-(q)$
are  described in \cite[Tables 8.50--8.53]{Ho}: this will be our main reference for the subdivision in 
classes $\mathcal{C}_1,\ldots,\mathcal{C}_6,\mathcal{S}$, and for the structure of these subgroups.

Let $K\leq \GL_n(\K)$ and $U$ be a $K$-invariant subspace of $V=\K^n$.
Then $U$ has a $K^T$-invariant complement $\overline{U}$.
Assume further that $K$ preserves a non singular form $J$, namely that $J k J^{-1} = k^{-T}$ for all
$k \in K$. It follows that $J\overline{U}$ is a $K$-invariant subspace of dimension $n-\dim(U)$.
In particular this applies to $K\leq \Omega^{\pm}_{2n}(q)$, in which case we may always assume $\dim(U)\leq n$.

Given an eigenvalue $\lambda$ of a matrix $g \in \GL_n(\K)$, with corresponding eigenspace $V_{\lambda}(g)$, 
we define the following $g$-invariant subspace: $E_\lambda(g)=V_\lambda(g)\cap \Imm(g-\I_n)$.

For any finite group $G$, let $\Upsilon(G)$ be the set of the prime divisors of $|G|$.
For simplicity, if $g \in G$,  we write $\Upsilon(g)$ for $\Upsilon(\langle g \rangle)$.

\section{Generators for $\Omega_8^+(q)$ and the triality automorphism}\label{gen+}

Since $\Omega_8^+(2)$ and $\Omega_8^+(3)$ are not $(2,3)$-generated, in this section
and in Section \ref{gen8+} we assume $q\neq 2,3$.
Let $a\in \F_q^*$ be such that $\F_p[a]=\F_q$. 
We define our generators $x=x(a), y$ as follows:
$$
x=\begin{pmatrix}
0&0&0&0&0&1&0&0\\
0&0&0&0&1& (a+1)^2 & (a+1)^2&a\\
0&0&0&0&0 & (a+1)^2 & (a+1)^2 &a\\
0&0&0&0&0&a&a&1\\
0&1&1&0&0&0&0&0\\
1&0&0&0&0&0&0&0\\
1&0&1&a&0&0&0&0\\
0&0&a& (a+1)^2 &0&0&0&0
\end{pmatrix} \quad \textrm{if } p=2,$$
$$x=\begin{pmatrix}
0&-1&0&0&0&0&0&0\\
-1&0&0&0&0&0&0&0\\
1&-1&-1&-a&0&0&0&0\\
0&0&0&1&0&0&0&0\\
0&0&0&1&0&-1&1&0\\
0&0&0&-1&-1&0&-1&0\\
0&0&0&-2&0&0&-1&0\\
1&-1&-2&-2a&0&0&-a&1
\end{pmatrix} \quad \textrm{if } p>2,$$ 
$$y=
\begin{pmatrix}
1&0&0&0&0&0&0&0\\
0&0&0&0&0&0&0&1\\
0&0&0&0&0&1&0&0\\
0&0&1&0&0&0&0&0\\
0&0&0&0&1&0&0&0\\
0&0&0&1&0&0&0&0\\
0&1&0&0&0&0&0&0\\
0&0&0&0&0&0&1&0
\end{pmatrix}.$$
Then, $x$ and $y$ have respective orders $2$ and $3$. Moreover they belong to the group 
$\SO^+_8(q)$ defined by the quadratic form 
$Q\left(\sum\limits_{i=1}^4 (\alpha_i e_i+\alpha_{-i}e_{-i})\right)= \sum\limits_{i=1}^4 \alpha_i \alpha_{-i}$.

The group $\P\Omega^+_8(q)$ has an outer automorphism $\tau$ of order $3$, called triality, which arises from a 
symmetry of order $3$ of the Dynkin diagram of type $D_4$. It can also be read from the spin representation of 
the Clifford group (e.g., see \cite[pages 78--81]{W}). We describe $\tau$ in terms of the 
Steinberg generators of $\Omega^+_8(q)$ (see \cite[page 185]{C}), namely:
$$\begin{array}{rclcrcl}
x_{1,2}(\alpha) & = & \I_8+\alpha (E_{1,2}-E_{-2,-1}),& \quad & x_{2,1}(\alpha) &= & \left(x_{1,2}(\alpha)\right)^T,\\
x_{2,3}(\alpha) & = & \I_8+\alpha (E_{2,3}-E_{-3,-2}), & & x_{3,2}(\alpha) & = &\left(x_{2,3}(\alpha)\right)^T,\\
x_{3,4}(\alpha) & = & \I_8+\alpha (E_{3,4}-E_{-4,-3}),& &  x_{4,3}(\alpha)& =&\left(x_{3,4}(\alpha)\right)^T,\\
x_{1,-2}(\alpha) & = & \I_8+\alpha (E_{1,-2}-E_{2,-1}),  & & x_{-2,1}(\alpha)&= &\left(x_{1,-2}(\alpha)\right)^T.\\
\end{array}$$
Then $\tau$ is defined by the following permutation action \cite[Proposition 12.2.3]{C}:
\begin{equation}\label{triality}
\left(x_{1,2}(\alpha), x_{3,4}(\alpha), x_{-2,1}(\alpha)\right)\left(x_{2,1}(\alpha), x_{4,3}(\alpha), x_{1,-2}(\alpha)\right).
\end{equation}

We need $\tau$ mainly because it moves certain  maximal subgroups from class $\mathcal{C}_1$  to some other  class  $\mathcal{C}_j$.
So, to exclude that $H\leq M\in \mathcal{C}_j$ we show that $\tau^i(H)$, $i=1,2$, is absolutely irreducible,
hence it cannot be contained in any maximal subgroup of class $\mathcal{C}_1$.

In order to calculate $\tau(x)$ and $\tau(y)$, we need to express $x,y$ as products of the above generators,
proving in particular that $H\leq \Omega_8^+(q)$.
To this purpose it is useful to consider the factorization $\P\Omega_8^+(q)=BNB$,
where $B$ is the Borel subgroup and $N$ the monomial subgroup.
As a special case of the formula $[\I_n+\alpha E_{i,j}, \I_n+\beta E_{j,k}]= \I_n+\alpha\beta E_{i,k}$ for $i,j,k$ distinct, we get: 
$$x_{2,4}(\alpha) = [x_{2,3}(\alpha), x_{3,4}(1)],\quad
x_{1,3}(\alpha) = [x_{1,2}(\alpha),x_{2,3}(1)],\quad
x_{1,4}(\alpha) = [x_{1,3}(\alpha),x_{3,4}(1)].$$
Now, set
$$\begin{array}{rclcrcl}
\pi_{1,2} & = & x_{2,1}(-1)\;x_{1,2}(1)\;x_{2,1}(-1),&\; & \pi_{2,3} & =&  x_{3,2}(-1)\;x_{2,3}(1)\;x_{3,2}(-1), \\
\pi_{3,4} & = & x_{4,3}(-1)\;x_{3,4}(1)\;x_{4,3}(-1),& & \pi_{1,-2} & = & x_{-2,1}(-1)\;x_{1,-2}(1) \;x_{-2,1}(-1), \\
\pi_{1,3} & = & \pi_{1,2}^{-1}\;\pi_{2,3}\;\pi_{1,2},& & \pi_{2,4} & = & \pi_{2,3}^{-1}\; \pi_{3,4}\; \pi_{2,3}, \\
\end{array}$$
and then 
$$\rho_{1,2} =\pi_{1,2}\;\pi_{1,-2},\quad
\rho_{3,4} =  \pi_{1,3}\;\pi_{2,4}\;\rho_{1,2}\;\pi_{1,3}\;\pi_{2,4},\quad
J =  \rho_{1,2}\;\rho_{3,4}.$$
Finally, we define
$$\begin{array}{rclcrcl}
x_{-1,3}(\alpha) &  = & (x_{-2,1}(\alpha))^{\pi_{2,3}}, &  & x_{-1,4}(\alpha) & =& (x_{-1,3}(1))^{\pi_{3,4}},\\
x_{-2,4}(\alpha) & =&  (x_{-1,4}(\alpha))^{\pi_{1,2}}, & & 
x_{-3,4}(\alpha) & =& (x_{-2,4}(\alpha))^{\pi_{2,3}}.\\
\end{array}$$
Now $H\leq \Omega_8^+(q)$, since we can write 
$$y=\rho_{1,2}\;\pi_{2,3}\;\rho_{1,2}\; \pi_{3,4}^3$$
and 
$$x=\left\{\begin{array}{ll}
  x_{2,3}(1) \;x_{3,4}(a)\; \pi_{1,2}\; x_{4,3}(a)\; x_{3,2}(1) \;J & \textrm{ if } p=2,\\[5pt]
   x_{-1,4}(-1)\; x_{-2,4}(1)\; x_{-3,4}(2)\; x_{1,2}(1)\; x_{2,3}(-1) \;
x_{2,4}(-a) \cdot\\
x_{1,3}(1)\; x_{1,4}(a)\; \pi_{2,3}^2 \;\pi_{1,3}^{-1}\;x_{1,2}(-1)\; x_{2,3}(1) \; x_{1,4}(-a)  & \textrm{ if } p>2.
    \end{array}\right.$$

Applying \eqref{triality} we get:
$$\tau(y)=
\begin{pmatrix}
0&0&0&-1&0&0&0&0\\
0&0&0&0&1&0&0&0\\
0&0&-1&0&0&0&0&0\\
0&0&0&0&0&1&0&0\\
0&0&0&0&0&0&0&-1\\
1&0&0&0&0&0&0&0\\
0&0&0&0&0&0&-1&0\\
0&1&0&0&0&0&0&0
\end{pmatrix}, \; \tau^2(y)=\begin{pmatrix}
0&0&0&0&0&0&0&1\\
-1&0&0&0&0&0&0&0\\
0&0&-1&0&0&0&0&0\\
0&0&0&0&0&1&0&0\\
0&0&0&1&0&0&0&0\\
0&0&0&0&-1&0&0&0\\
0&0&0&0&0&0&-1&0\\
0&1&0&0&0&0&0&0
\end{pmatrix}.$$
Furthermore, $\tau(x)$ and $\tau^2(x)$ are, respectively, the matrices
$$
\begin{pmatrix}
0&a&a&0&1&0&0&0\\
a&0&0&0&0&1&0&1\\
0&0&0&0&0&0&0&1\\
0&0&0&0&0&1&1&0\\
a^2+1&0&0&0&0&a&0&0\\
0&a^2+1&a^2+1&0&a&0&0&0\\
0&a^2+1&a^2+1&1&a&0&0&0\\
0&0&1&0&0&0&0&0
\end{pmatrix},\;
\begin{pmatrix}
a&a^2+1&a^2+1&0&0&0&0&0\\
1&a&a&0&0&1&1&0\\
0&0&0&0&0&1&1&0\\
0&0&0&0&0&0&0&1\\
0&0&0&0&a&1&0&0\\
0&0&0&0&a^2+1&a&0&0\\
0&0&1&0&a^2+1&a&0&0\\
0&0&0&1&0&0&0&0
\end{pmatrix}$$
if $p=2$, and            
$$
\begin{pmatrix}
1&0&0&0&0&0&0&0\\
0&-1&0&0&0&0&0&0\\
0&1&0&1&0&0&0&0\\
0&1&1&0&0&0&0&0\\
0&-a&0&0&1&0&0&0\\
-a&0&-1&1&0&-1&1&1\\
0&-1&-2&0&0&0&0&1\\
0&1&0&2&0&0&1&0
\end{pmatrix},
\quad 
 \begin{pmatrix}
0&0&0&-1&-a&1&-1&0\\
0&0&0&0&1&0&0&0\\
0&-1&-1&0&-1&0&0&0\\
0&0&0&1&0&0&0&0\\
0&1&0&0&0&0&0&0\\
1&a&0&-1&0&0&-1&0\\
0&0&0&-2&0&0&-1&0\\
0&-1&-2&0&-1&0&0&1
\end{pmatrix}$$
if $p>2$.

\begin{lemma}\label{irr+2}
If $p=2$ the groups $H$, $\tau(H)$ and $\tau^2(H)$ are absolutely irreducible. 		
\end{lemma}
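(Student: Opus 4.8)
The three groups are generated, respectively, by the pairs $(x,y)$, $(\tau(x),\tau(y))$ and $(\tau^2(x),\tau^2(y))$ displayed above, each consisting of an involution together with an element of order $3$. Since the argument is the same in all three cases, I will describe it for $H=\langle x,y\rangle$ and only indicate the minor changes for the images under triality. The plan is to work over the algebraic closure $\F$, set $V=\F^8$, and prove absolute irreducibility in its primitive form: by Burnside's theorem $H$ is absolutely irreducible if and only if $x$ and $y$ generate the whole algebra $\Mat_8(\F)$, equivalently if and only if $V$ has no proper nonzero $H$-invariant subspace. It is worth stressing at the outset that here one cannot shortcut this by computing the centraliser of $H$: since $p=2$ divides $|H|$ the module $V$ is not semisimple, so a nonscalar endomorphism may fail to exist even in a reducible situation. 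Thus I would rule out invariant subspaces directly.

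First I would exploit $y$. As $3$ is coprime to $p=2$ and $\omega\in\F$, the element $y$ is semisimple; reading off its matrix, $y$ is a permutation matrix fixing $e_1,e_{-1}$ and acting on the remaining six basis vectors as the two $3$-cycles $(e_2,e_{-3},e_{-4})$ and $(e_3,e_4,e_{-2})$. Hence $V=V_1(y)\oplus V_\omega(y)\oplus V_{\omega^2}(y)$ with $\dim V_1(y)=4$ and $\dim V_\omega(y)=\dim V_{\omega^2}(y)=2$, the three eigenprojections being polynomials in $y$ and so lying in $\F[H]$. Consequently every $H$-invariant subspace $U$ splits as $U=(U\cap V_1(y))\oplus(U\cap V_\omega(y))\oplus(U\cap V_{\omega^2}(y))$. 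Moreover $H$ preserves the quadratic form $Q$, so $U^\perp$ is again $H$-invariant and I may assume $\dim U\le 4$.

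Next I would bring in $x$. Using the explicit matrix of $x$, together with the words $xy,\,yx,\,xyx$ and, where convenient, the auxiliary subspaces $E_\lambda(g)=V_\lambda(g)\cap\Imm(g-\I_8)$ introduced earlier, one checks that $x$ does not stabilise the eigenspace decomposition of $y$ but genuinely mixes the $4$-dimensional space $V_1(y)$ with $V_\omega(y)\oplus V_{\omega^2}(y)$. Starting from any nonzero $u\in U$, its $y$-eigencomponents already lie in $U$, and applying $x$ followed by the eigenprojections manufactures new vectors in each eigenspace; running through the finitely many admissible dimension patterns $(\dim(U\cap V_1(y)),\dim(U\cap V_\omega(y)),\dim(U\cap V_{\omega^2}(y)))$ of total at most $4$ shows that each nonzero choice forces $U=V$, a contradiction. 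Equivalently, this step is the verification that a suitable finite family of words in $x$ and $y$ has rank $64$ in $\Mat_8(\F)$.

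The hard part is exactly this last verification, and it is concentrated in the cases where $U$ meets the large $1$-eigenspace $V_1(y)$: because $\dim V_1(y)=4$ there are several dimension patterns to dispose of, and one must keep careful track of how $x$ transports vectors out of $V_1(y)$ into the $\omega$- and $\omega^2$-eigenspaces. This is routine but lengthy, and it is really a single piece of linear algebra that has to be carried out three times. For $\tau(H)$ and $\tau^2(H)$ the scheme applies verbatim: since $p=2$, the displayed matrices show that $\tau(y)$ and $\tau^2(y)$ are again permutation matrices of order $3$ with two fixed points and two $3$-cycles, so their eigenspaces have the same dimensions $4,2,2$, and one simply repeats the invariant-subspace elimination with $\tau(x)$ and $\tau^2(x)$ in place of $x$.
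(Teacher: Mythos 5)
Your setup coincides with the paper's own: reduce absolute irreducibility to the nonexistence of a nonzero $H$-invariant subspace $U$ of $\F^8$ with $\dim U\leq 4$, using the semisimplicity of $y$, the splitting of $U$ along the eigenspaces of $y$, and invariance of a complement. But everything after that setup is a promise rather than a proof. The entire content of the lemma is the verification you label ``routine but lengthy'': producing explicit words in $x,y$ whose images of suitable vectors have rank at least $5$ (or span $\F^8$), \emph{for every admissible value of the parameter $a$} and \emph{for every candidate vector in $U$}. This step has genuine case distinctions that cannot be waved away. For example, in the paper's computation the first $8\times 8$ determinant equals $(a+1)^4(a^2+a+\omega)^4$, which really does vanish for those $q$ admitting $a$ with $a^2=a+\omega$, so a second matrix must be produced; further degenerate subcases (the vanishing of the quantity $A$, the two special values of $b_1$, the coprimality of the polynomials $d_1,d_2$) occur throughout. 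Nothing in your outline guarantees that the ``manufactured'' vectors are independent, and no such words or determinants are exhibited, so the lemma is not proved.

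There is also a structural weakness in the plan itself. Organizing the elimination by the dimension patterns of $\bigl(U\cap V_1(y),\,U\cap V_\omega(y),\,U\cap V_{\omega^2}(y)\bigr)$ leaves you with continuous families of subspaces: a pattern such as $U\subseteq V_1(y)$ with $\dim U$ up to $4$ involves vectors with up to three projective parameters, and there your projection trick yields nothing, since for an $H$-invariant $U$ applying $x$ and then the eigenprojections never leaves $U$. The paper sidesteps exactly this difficulty by anchoring on the $2$-dimensional eigenspace $V_\omega(y)$ and dualizing: either $U$ contains a vector $b_1v_1+b_2v_2$ --- a one-parameter family, disposed of by explicit $5\times 5$ minors --- or $U\cap V_\omega(y)=\{0\}$, in which case $U\subseteq V_1(y)\oplus V_{\omega^2}(y)$, hence $V_\omega(y^T)\leq \overline U$, and a single $8\times 8$ determinant shows that the $H^T$-orbit of $V_\omega(y^T)$ spans $\F^8$, forcing $U=\{0\}$ with no free parameters at all. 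If you want to complete your argument, you should adopt this reduction (and then still carry out the determinant computations, including their degenerate subcases, separately for $H$, $\tau(H)$ and $\tau^2(H)$); as it stands, your text is a correct strategy statement with the proof missing.
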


\begin{proof}
We begin by proving the absolute irreducibility of $H$.
The eigenspace $V_\omega(y)$ is generated by $v_i=v_i(\omega)$, $i=1,2$, where:
$$v_1=e_2 + \omega^{-1} e_{-3} +  \omega e_{-4} \equad
  v_2=e_3 + \omega^{-1} e_{4} +  \omega e_{-2}.$$
Let $\left\{0\right\}\neq U$ be an $H$-invariant subspace of $\F^8$ and $\overline U$ be an $H^T$-invariant complement of $U$. 
We may assume $\dim U\leq 4$. 
If $U\cap V_\omega(y) =\left\{0\right\}$, then $V_\omega(y^T)\leq \overline U$. From $y^T=y^{-1}$ it follows that 
$V_\omega(y^T)=\langle v_1^-, v_2^- \rangle$, where $v_1^-=v_1(\omega^{-1})$ and $v_2^-=v_2(\omega^{-1})$. The matrix $M$ whose columns are 
$$v_1^-,\; x^T v_1^-,\;  (xy)^T v_1^-,\; (xyx)^Tv_1^-,\;  v_2^-, \;  x^Tv_2^-,\; (xy)^T v_2^-,\;  (xyx)^Tv_2^-$$
has determinant $(a+1)^4 (a^2+a+\omega)^4$, which is nonzero unless $a^2=a+\omega$.
In this case, take the matrix $\overline M$ obtained by replacing the last two columns of $M$ with
$((xy)^2)^T v_1^-$ and $((xy)^2x)^T v_1^-$. Then $\det(\overline M)=(a+1)^3\neq 0$.
So, in any case, we obtain the contradiction $U=\left\{0\right\}$. It follows that, for some $(b_1,b_2)\neq (0,0)$,
$v=b_1v_1+b_2v_2\in U$.

Let $N$ be the matrix whose columns are the $v,\; xv,\; yxv,\; y^2xv,\; (yx)^2v$.
If $v=v_2$,  then $\det(N_{(1, 2, 3, 4, 8)})=(a+1)^6$ and hence $N$ has rank $5$, in contrast with $\dim U\leq 4$.
If $v=v_1$, again $v_2\in U$ as 
$$(a^2+a)v_2=(a^2 + \omega^2a + 1) v_1+ \omega xv_1+ yxv_1+ \omega^2y^2xv_1.$$
So, assume $v=b_1v_1+v_2$ with $b_1\neq 0$, and set
$A=\left((a+1)b_1+\omega\right)\left(a b_1+\omega(a+1)\right)$.
If $A\neq 0$, again $v_2\in U$ as 
$$Av_2=( (a^2 + \omega^2 a + 1) b_1  + \omega^2(a+1)) v+ \omega b_1 xv+b_1 yxv+ \omega^2b_1y^2xv.$$
If $A=0$, then $b_1\in \left\{\frac{\omega}{a+1}, \frac{\omega (a+1)}{a}\right\}$. In both cases, 
take the matrix $B$ whose columns are $v,\; xv,\; yxv,\; xyxv,\; (yx)^2 v$.
If $b_1=\frac{\omega}{a+1}$, then $\det(B_{(1,2,3,4,5)})=\frac{d_1}{(a+1)^5}$
and $\det(B_{(1,2,3,4,7)})=\frac{d_2}{(a+1)^5}$, 
where 
$$\begin{array}{rcl}
d_1 & =& \omega^2 a^{13} + a^{12} + \omega^2 a^{10} + \omega a^9 + \omega a^8 + a^7 + \omega^2 a^6 + a^5 +
    \omega^2 a^4 + \omega a^3 + \omega a^2 + a + \omega^2,\\
d_2 & =& \omega^2 a^{15} + \omega a^{14} + \omega a^{13} + \omega^2 a^{12} + a^{10} + a^8 + \omega a^7 + a^6 + a^5 +
    \omega^2 a^4 +  a^3 + \omega a^2 +\\
    && a + \omega.
\end{array}$$
If $b_1=\frac{\omega(a+1)}{a}$, then $\det(B_{(1,2,3,4,5)})=\frac{d_1}{a^5}$
and $\det(B_{(1,2,3,4,7)})=\frac{d_2}{a^5}$, 
where 
$$\begin{array}{rcl}
d_1 & =& a^{11} + \omega^2 a^{10} + a^9 + \omega^2 a^8 + \omega a^7 + \omega^2 a^6 + \omega a^5 + \omega a^4 +
    \omega a^3 + a + \omega,\\
d_2 & =& a^{13} + \omega^2 a^{12} + a^{10} + \omega^2 a^8 + \omega a^7 + \omega^2 a^6 + \omega a^4 + a^3 + \omega a + 1.
    \end{array}$$
In both cases, $d_1$ and $d_2$ are coprime, whence the contradiction $\rk(B)=5$.

Now, to prove the absolute irreducibility of the groups  $\tau(H),\tau^2(H)$, we simply adapt the previous argument.
For sake of brevity, we only describe the necessary modifications.

Consider $\tau(H)$ and write $x$ for $\tau(x)$ and $y$ for $\tau(y)$.
The eigenspace $V_\omega(y)$ is generated by $v_i=v_i(\omega)$, $i=1,2$, where
$$v_1=e_1+ \omega e_4 +\omega^{-1} e_{-2} \equad 
  v_2=e_2+ \omega e_{-1} +\omega^{-1} e_{-4} .$$
Then $\det(M)=\omega (a+1)^2 (a^2+a+\omega)^4$ and, assuming $a^2=a+\omega$, $\det(\overline M)=\omega^2 (a+1)^2\neq 0$.
If $v=v_2$,  then $\det(N_{(1, 2, 3, 5, 7)})=\omega  a (a+1)^3\neq 0$.
Furthermore,  $(a^2+a)v_2= \omega^2 v_1+ \omega xv_1+ yxv_1+  \omega^2y^2xv_1$
and $$Av_2= \omega^2 (a^2 + a+ b_1) v+ \omega b_1 xv+ b_1 yxv+ \omega^2 b_1y^2xv,$$
where
$A=\left((a+1)b_1+\omega a \right)\left(a b_1+\omega(a+1)\right)$.

If $b_1=\frac{\omega a }{a+1}$, then $\det(B_{(1,2,3,4,5)})=\frac{d_1}{(a+1)^5}$
and $\det(B_{(1,2,3,4,6)})=\frac{d_2}{(a+1)^5}$, 
where 
$$\begin{array}{rcl}
d_1 & =& a^{14} + \omega^2 a^{13} + a^{12} + a^{11} + \omega a^9 + \omega a^8 + \omega^2 a^7 + \omega^2 a^5 + a^4 +
    \omega^2 a^3 + a + \omega,\\
 d_2 & =& a^{15} + \omega a^{14} + \omega a^{13} + a^{12} + \omega a^{10} + \omega a^9 + \omega^2 a^7 + \omega^2 a^6 +
    \omega a^5 +  \omega^2 a^4 + \omega^2 a^3 +\\
    && \omega a + 1.
     \end{array}$$
If $b_1=\frac{\omega(a+1)}{a}$, then $\det(B_{(1,2,3,4,5)})=\frac{d_1}{a^4}$
and $\det(B_{(1,2,3,4,7)})=\frac{d_2}{a^4}$, 
where 
$$\begin{array}{rcl}
d_1 & =& a^{13} + \omega a^{12} + \omega a^{11} + \omega a^{10} + a^8 + a^7 + \omega a^5 + \omega^2 a^4 + a^3 +
    \omega^2 a^2 + a + \omega,\\
d_2 & =& a^{14} + \omega a^{13} + \omega^2 a^{12} + a^{11} + \omega a^{10} + a^8 + \omega a^7 + \omega^2 a^6 +
    \omega^2 a^4 + \omega a^3 +\omega^2 a^2 + \\
    && \omega^2 a + \omega.
\end{array}$$
In both cases, $d_1$ and $d_2$ are coprime, whence the contradiction $\rk(B)=5$.

Finally, consider $\tau^2(H)$ and write $x$ for $\tau^2(x)$ and $y$ for $\tau^2(y)$.
The eigenspace $V_\omega(y)$ is generated by $v_i=v_i(\omega)$, $i=1,2$, where
$$v_1=e_1 + \omega^{-1} e_2 +\omega  e_{-4}  \equad v_2=e_4 + \omega^{-1} e_{-1} + \omega e_{-2}.$$
Then $\det(M)= a^2 (a+1)^2 (a^2+a+\omega)^4$
and, assuming $a^2=a+\omega$, $\det(\overline M)= (a+1)^3 \neq 0$.
Take as $N$ the matrix having columns $v,\;xv,\;yxv,\; xy^2xv,\; (yx)^2v$.
If $v=v_2$,  then $\det(N_{(1, 2, 3, 4, 7)})=\omega  a^2(a+1)^4\neq 0$.
Furthermore,  $\omega v_2=(\omega a+1)^2 v_1+   xv_1 + \omega^2  yxv_1+  \omega  y^2x v_1$
and $$ (a^2 + \omega b_1) v_2= (\omega  a+1)^2 v+  xv + \omega^2 yx v+ \omega  y^2xv.$$
If $b_1=(\omega a)^2$,  
then $\det(N_{(1,2,3,4,5)})=d_1$ and $\det(N_{(1, 3, 4, 5, 6 )})=d_2$, 
where 
$$\begin{array}{rcl}
d_1 & =& a^{13} + a^{12} + a^{11} + a^{10} + a^9 + \omega a^7 + a^6 + a^3 + \omega^2 a + 1,\\
d_2 & =& \omega a^{12} + \omega a^{11} + a^{10} + \omega a^8 + a^6 + \omega a^5 + \omega a^3 + a^2 + \omega^2 a + \omega.
\end{array}$$
Since $d_1$ and $d_2$ are coprime,  we obtain the contradiction  $\rk(N)=5$.
\end{proof}

\begin{lemma}\label{irr+odd}
Let $p$ be odd. If $a\neq \pm 2$, the group $H$ is absolutely irreducible and 
contains an element $\eta$ whose order is divisible by $p$.
\end{lemma}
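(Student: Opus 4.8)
The absolute irreducibility of $H$ will be proved along exactly the lines of Lemma \ref{irr+2}, the only change being that $x$ now denotes the matrix displayed for $p>2$. Since $y$ is the same permutation matrix in both parities, its $\omega$-eigenspace is again $V_\omega(y)=\langle v_1,v_2\rangle$ with $v_1=e_2+\omega^{-1}e_{-3}+\omega e_{-4}$ and $v_2=e_3+\omega^{-1}e_4+\omega e_{-2}$, and $y^T=y^{-1}$ still holds. So I would let $\{0\}\neq U$ be an $H$-invariant subspace with an $H^T$-invariant complement $\overline U$ and, by the self-duality reduction recorded in the preliminaries, assume $\dim U\leq 4$. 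The first step is to exclude $U\cap V_\omega(y)=\{0\}$: in that case $V_\omega(y^{-1})=\langle v_1^-,v_2^-\rangle\leq \overline U$, and forming the matrix $M$ whose columns are the images of $v_1^-,v_2^-$ under $\I,\,x^T,\,(xy)^T,\,(xyx)^T$, one checks that $\det M$ is a nonzero polynomial in $a$ and $\omega$; at the finitely many roots one replaces the offending columns by higher words in $(xy)^T$ (the analogue of $\overline M$) to retain a nonzero determinant. This forces $\dim\overline U>4$, a contradiction, so $U$ meets $V_\omega(y)$ nontrivially.

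Thus some $v=b_1v_1+b_2v_2\in U$, and I would mirror the even-characteristic case analysis $v=v_2$, $v=v_1$, and $v=b_1v_1+v_2$ with $b_1\neq 0$. In each case one either produces an explicit linear relation exhibiting $v_2\in U$ (whence $v_1\in U$ as well, so $V_\omega(y)\leq U$ and one bootstraps to $U=\F^8$), or one displays five vectors $v,\,xv,\,yxv,\dots$ whose coordinate submatrices have determinants $d_1,d_2\in\F[a,\omega]$ that are coprime, forcing $\rk\geq 5$ against $\dim U\leq 4$. The hypothesis $a\neq\pm 2$ is what keeps the governing leading determinants (the analogues of $(a+1)^4(a^2+a+\omega)^4$) from vanishing identically and what correctly isolates the exceptional values of $b_1$; the coprimality of each pair $d_1,d_2$ is then a finite check in $\F_p[a]$.

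For the element $\eta$ I would exhibit a short word $w$ in $x$ and $y$ and compute its characteristic polynomial over $\F_q$. Recall that $w$ has order coprime to $p$ precisely when it is semisimple, i.e. when its minimal polynomial is squarefree; so it suffices to find $w$ that is \emph{not} semisimple. Concretely I would locate a repeated eigenvalue $\mu$ of $w$ and verify $\dim V_\mu(w)<$ its algebraic multiplicity, equivalently that $\rk(w-\mu\I)$ is too large to diagonalise $w$; this exhibits a nontrivial Jordan block, so $w$ has a nontrivial unipotent part and $p$ divides its order. Setting $\eta:=w$ then gives the required element. I expect the condition $a\neq\pm 2$ to be exactly what makes such a repeated eigenvalue (and the single-block degeneracy) persist uniformly for every admissible $a$.

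The difficulty here is bookkeeping rather than conceptual. The genuinely delicate points are, first, arranging the case analysis so that in each degenerate subcase a pair of \emph{coprime} determinant polynomials is actually available, and second, choosing the word $w$ so that its non-semisimplicity is uniform in $a$ away from $\pm 2$; both amount to finite computations in $\F_p[a]$, but matching their exceptional loci to the clean exclusion $a\neq\pm 2$ is where the care is needed.
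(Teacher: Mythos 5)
Your proposal is a strategy outline, not a proof, and for a lemma of this kind the entire mathematical content lives in the computations you defer. The most concrete gap is in the second assertion: the lemma requires producing an element of $H$ whose order is divisible by $p$, and you never name one. Saying ``I would exhibit a short word $w$ \dots\ Concretely I would locate a repeated eigenvalue'' is precisely the step that has to be carried out: there is no a priori guarantee that a short word exists whose non-semisimplicity persists uniformly for all odd $p$ and all $a\neq\pm 2$, and finding one is a genuine search. The paper takes $\eta=(xy^2)^2(xy)^2$, computes $\chi_\eta(t)=(t-1)^4\psi(t)$, and checks that $V_1(\eta)$ is only $2$-dimensional (for $a\neq -2$), so that the algebraic multiplicity $\geq 4$ of the eigenvalue $1$ exceeds its geometric multiplicity $2$; your criterion (non-semisimple $\Rightarrow$ order divisible by $p$) is the right one, but without the explicit $\eta$ and the explicit eigenspace computation there is no proof.

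The irreducibility half has the same problem, compounded by a risk you do not address: you assume the argument of Lemma \ref{irr+2} transfers with $V_\omega(y)$ as the pivot, but the paper itself does \emph{not} do this for $p$ odd. Instead it pivots on $V_1(\eta)$ for the same element $\eta$ above, exploiting the relation $\eta^x=\eta^{-1}$ (so that $x$ stabilizes $V_1(\eta)$, and $\langle v,xv\rangle=V_1(\eta)$ for all nonzero $v\in V_1(\eta)$ outside two exceptional lines spanned by $v_2$ and $\tilde v$); this reduces the case analysis to two explicit vectors and simultaneously delivers the $p$-divisibility claim. That the authors abandoned the $V_\omega(y)$ route for odd $p$ is at least a warning that your mirrored case analysis ($v=v_1$, $v=v_2$, $v=b_1v_1+v_2$) may not close up with exceptional locus exactly $\{a=\pm2\}$; whether your pairs $d_1,d_2$ are coprime, and whether the degenerate values of $b_1$ can all be dispatched, is exactly what a proof must verify and what you only ``expect''. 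As it stands, both halves of your argument reduce to unperformed finite computations whose success is the statement being proved.
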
 

\begin{proof}
The characteristic polynomial of $\eta=(xy^2)^2(xy)^2$ is $\chi_\eta(t)=(t-1)^4 \psi(t)$ with 
$$\psi(t)=t^4- (4a^3 + 12a^2 - 14)t^3 + (4a^4 - 8a^3 - 44a^2 + 51)t^2  - ( 4a^3 + 12a^2 - 14)t + 1.$$
By direct computations, if $a\neq -2$, the eigenspace $V_1(\eta)$ is generated by the vectors
$$ v_1=e_1+e_2+(1-a)e_{-2} +e_{-3} + a e_{-4} \equad v_2=e_{-1}-e_{-2}+e_{-4}.$$
Note that $\eta^x=\eta^{-1}$ and that $\left\langle v,xv\right\rangle=V_1(\eta)$
for all $0\neq v$ in $V_1(\eta)$ unless $v$ is either a multiple of $v_2$ or a multiple of $\tilde v$, where
$$\tilde v=   2(e_1 + e_2 + e_{-3} )  -a e_{-1}+(2-a)e_{-2}+ a e_{-4}.$$

Let $U$ be an $H$-invariant subspace of $\F^8$ and $\overline{U}$ be an $H^T$-invariant complement of $U$.
We may assume $\dim U\leq 4$. Suppose first  that $\eta_{|U}$ has the eigenvalue $1$.
By what observed above, we may assume that $U$ contains the vector $v\in \{v_2,\tilde v\}$.
Let $M$ be the matrix whose columns are $v,\;yv,\; y^2v,\; xyv,\; xy^2v$.
Then $\det(M_{(1,2,3,4,5)})=-a$ if $v=v_2$ and $\det(M_{(1, 2, 3, 6, 8 )})=2(a-2)^4$ if $v=\tilde v$.
So, in both cases, $M$ has rank $5$ under our assumptions. 

Now, suppose that $\eta_{|U}$ does not have the eigenvalue $1$. 
Then $\overline{U}$ contains $V_1(\eta^T)$ and, in particular, contains the eigenvector 
$u=e_1-e_2+e_4$.
The matrix $N_1$ whose columns are the vectors 
\begin{equation}\label{N12}
u,\; y^Tu, \;(y^2)^T u,\; (yx)^T u,\;  (y^2x)^T u,\; (yxy)^T u, 
\end{equation}
and the vectors $(yxy^2)^T u,\; ((yx)^2)^Tu$ has determinant $(a-2)d_1$, where $d_1= (a^2-a-3) (a^2-a+1)$. The
matrix $N_2$ whose columns are the vectors \eqref{N12} and 
$((yx)^3)^Tu, \; ((yx)^4)^T u$ has determinant $(a-2)d_2$, where $d_2=a^6 - 2 a^5 - 7 a^4 + 6 a^3 + 15 a^2 - 8 a + 6$.
If $a^2=a+3$, then $d_2=-16a\neq 0$; if $a^2=a-1$, then $d_2=16a^2\neq 0$.
We conclude that $\dim(\overline{U})=8$.

Finally, since $1$ is eigenvalue of $\eta$ having algebraic multiplicity at least $4$, but geometric multiplicity equals to $2$, 
we conclude that the order of $\eta$ is divisible by $p$.
\end{proof}

\begin{lemma}\label{irr+tau}
Let $p$ be odd. If $a\neq -2$, the group $\tau(H)$ is absolutely irreducible.
\end{lemma}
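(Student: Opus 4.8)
The plan is to mirror the structure of the proof of Lemma \ref{irr+odd}, now applied to the group $\tau(H)$ generated by the matrices $\tau(x)$ and $\tau(y)$ exhibited above (in the $p>2$ case). First I would locate a convenient element $\eta$ of $\tau(H)$ whose fixed space $V_1(\eta)$ is low-dimensional and easy to control; the natural candidate is $\tau(\eta)$ where $\eta=(xy^2)^2(xy)^2$ is the element already used for $H$, since applying the (algebra-level) automorphism $\tau$ preserves the characteristic polynomial $\chi_\eta(t)=(t-1)^4\psi(t)$. Thus $1$ remains an eigenvalue of $\eta$ in $\tau(H)$ of algebraic multiplicity at least $4$, and I would recompute, by direct linear algebra from the explicit matrix $\tau(x),\tau(y)$, a spanning pair $v_1,v_2$ of the (two-dimensional) eigenspace $V_1(\eta)$, together with the ``bad'' vectors where $\langle v, xv\rangle$ fails to exhaust $V_1(\eta)$.

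Next I would run the same dichotomy on an $\tau(H)$-invariant subspace $U$ with $\dim U\le 4$ and an $H^T$-invariant complement $\overline U$. In the first case, where $\eta_{|U}$ has eigenvalue $1$, I would show $U$ must contain one of the distinguished vectors $v\in\{v_2,\tilde v\}$ and then produce an explicit $8\times 5$ matrix $M$ (columns $v, yv, y^2v, xyv, xy^2v$, or a similar cycle under the new generators) whose appropriate $5\times 5$ minor is a nonzero scalar multiple of a power of $(a+2)$, forcing $\rk(M)=5>\dim U$ and hence a contradiction, under the hypothesis $a\neq -2$. In the second case, where $\eta_{|U}$ does not fix $1$, the complement $\overline U$ contains $V_1(\eta^T)$, hence an explicit eigenvector $u$ of $\eta^T$; I would build two matrices $N_1,N_2$ from the orbit of $u$ under the transposed generators and compute two minors whose determinants share only the factor $(a+2)$ (up to a common polynomial), then verify by resultant or by substitution at the vanishing loci of the other factors that they cannot vanish simultaneously, yielding $\dim\overline U=8$ and so $U=\{0\}$.

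The main obstacle, as in Lemma \ref{irr+odd}, is purely computational: one must exhibit explicit polynomial minors in $a$ and argue that the relevant pairs are coprime (so that no single value of $a\neq -2$ kills all of them at once). The delicacy is in choosing the orbit vectors and the row-selections so that the resulting determinants factor cleanly, isolating the excluded value $a=-2$ and leaving a coprime residual pair; I expect the coprimality to be established exactly as before, by substituting the roots of one factor into the other and checking nonvanishing (e.g. the analogues of the $d_1=16a^2$, $d_2=-16a$ type identities). I would note that the hypothesis is now only $a\neq -2$ rather than $a\neq\pm2$, reflecting that the triality twist shifts which exceptional value of $a$ obstructs irreducibility; consequently the argument for $\tau^2(H)$, treated presumably in the next lemma, would require the complementary restriction, and between the three cases $H,\tau(H),\tau^2(H)$ one covers all admissible $a$.
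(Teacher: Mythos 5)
Your proposal breaks down at its very first step. You build everything on the element $\tau(\eta)$, $\eta=(xy^2)^2(xy)^2$, justified by the assertion that ``applying the (algebra-level) automorphism $\tau$ preserves the characteristic polynomial.'' This is false: triality is an outer automorphism of $\P\Omega_8^+(q)$ that is \emph{not} induced by conjugation inside $\GL_8(\F)$. (If it were --- and by Skolem--Noether any algebra automorphism of $\Mat_8(\F)$ is inner --- then $\tau(H)$ would be conjugate to $H$, the lemma would follow trivially from Lemma \ref{irr+odd}, and $\tau$ could never move maximal subgroups out of class $\mathcal{C}_1$, which is the whole reason the paper introduces it.) Concretely, triality exchanges the natural $8$-dimensional module with a half-spin module, so spectra transform nontrivially: the paper's own Remark \ref{poli} records that $[x,y]$ and $\tau([x,y])$ have \emph{different} characteristic polynomials for $p$ odd (coefficient $-2a$ versus $+2a$ at $t^7$), and Lemmas \ref{+C5S} and \ref{3D4} exploit precisely this failure of invariance. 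In weight terms, if $\eta$ has natural eigenvalues $1,1,1,1,\mu_1^{\pm1},\mu_2^{\pm1}$, then (up to sign ambiguities in the square roots) $\tau(\eta)$ has eigenvalues $\mu_1^{\pm 1/2}\mu_2^{\pm 1/2}$, each with multiplicity two; since $\psi(t)$ is not identically a square, for generic $a$ one has $\mu_1\neq\mu_2^{\pm1}$ and $\tau(\eta)$ has \emph{no} eigenvalue $1$ at all, let alone one of algebraic multiplicity $4$. Consequently the space $V_1(\tau(\eta))$ on which your entire dichotomy rests is generically $\{0\}$: your first branch becomes vacuous, and in the second branch $V_1(\eta^T)=\{0\}$ as well, so there is no eigenvector $u$ from which to build $N_1,N_2$, and no contradiction is ever produced.

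This is exactly why the paper abandons the $\eta$-based strategy of Lemma \ref{irr+odd} when treating $\tau(H)$ and argues instead from the order-three side: setting $y'=-\tau(y)$, it uses the two-dimensional space $E_\omega(y')=V_\omega(y')\cap\Imm(y'-\I_8)$, and splits into the cases $U\cap E_\omega(y')=\{0\}$ (then $E_{\omega^{-1}}(y')\leq\overline{U}$, and determinants of orbit matrices under the transposed generators give a contradiction) and $0\neq u\in U\cap E_\omega(y')$ (then $u$ is normalized to $v_1$ or $\lambda v_1+v_2$, the exceptional values of $\lambda$ are isolated, and coprime minors finish the job). Your second-stage machinery --- invariant complement, $8\times 5$ orbit matrices, coprimality of minors isolating the factor $a+2$ --- is the right kind of computation, but it must be hung on an eigenspace that actually exists for $\tau(H)$; the element you chose has the wrong spectrum, so the argument cannot be repaired merely by recomputing vectors. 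Finally, your closing remark that the three lemmas for $H$, $\tau(H)$, $\tau^2(H)$ together ``cover all admissible $a$'' misreads their role: all three must hold \emph{simultaneously} for one and the same $a$ (and indeed Lemma \ref{irr+tau2} also assumes only $a\neq-2$), because the irreducibility of both $\tau(H)$ and $\tau^2(H)$ is what prevents $H$ from lying in maximal subgroups whose triality images belong to class $\mathcal{C}_1$.
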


\begin{proof}
We write $x$ for $\tau(x)$ and $y$ for $-\tau(y)$.
We have that  $E=E_\omega(y)$ is generated by $v_i=v_i(\omega)$, $i=1,2$, where:
$$v_1(\omega)=e_1+\omega e_4-\omega^{-1} e_{-2} \equad v_2(\omega)=e_2-\omega e_{-1} -\omega^{-1} e_{-4}.$$
Let $\left\{0\right\}\neq U$ be a $\tau(H)$-invariant subspace of $\F^8$ and $\overline U$ an $\tau(H)^T$-invariant complement of $U$. 
If $U\cap E=\left\{0\right\}$, then $E_\omega(y^T)=E_{\omega^{-1}}(y)\leq \overline U$. 
Taking $v_1^-=v_1(\omega^{-1})$ and $v_2^-=v_2(\omega^{-1})$, 
let $M$ and $N$ be the matrices whose columns are, respectively 
$$v_1^-,\; x^T v_1^-,\; (xy)^T v_1^-,\;(xyx)^T v_1^-,\;((xy)^2)^T v_1^-,\;((xy)^2x)^T v_1^-,\;
  v_2^-,\; x^T v_2^-$$
  and 
$$v_1^-,\; x^T v_1^-,\; (xy)^T v_1^-,\;(xyx)^T v_1^-,\; 
  v_2^-,\; x^T v_2^-,\; (xy)^T v_2^-,\; (xyx)^T v_2^- .$$
Then $M$ has determinant $a (a+2) (a-\omega+2)^2 (a+\omega)^2$, which is nonzero unless $a\in \{-\omega, \omega-2\}$.
If $a=-\omega$, then $\det(N)=-2^6(a+2)$;
if $a=\omega - 2$, then $\det(N)= -2^4 (3\omega+1)$, which is nonzero unless $p=7$ and $\omega=2$,
which gives the contradiction $a=0$. In any case, we get the contradiction $U=\left\{0\right\}$.
 
Hence, there exists $0\neq u \in E\cap U$.
Let  $A$ be the matrix having columns 
$u,\; x u,\; yx u,\; xyx u,$ $yxy^2x u$.
If $u=v_1$, then $A$ has rank $5$, since $\det(A_{(1, 2, 3, 5, 7)})=-8\omega a$. Thus, we may assume $u=\lambda v_1+v_2$.
In this case,  $\det(A_{(2,3,5,7,8)})\neq 0$, except in the following cases:
$$\begin{array}{rrcrr}
(1): \;  & \lambda =\frac{a\omega-1}{2};& \quad & 
(2):\;  & \lambda^2 + \frac{(a + 1) (\omega^{2}-1 )}{2} \lambda - \frac{\omega^{2}}{2} =0.
  \end{array}$$
Let $B$ be the matrix whose columns are
$u,\; xu, \; yx u,\; (yx)^2 u,\;  (yx)^3 u$.\\
\textbf{Case (1).} 
If $p=3$, then  $\det(B_{(2, 3, 4, 5, 8 )})=-(a+2)^3\neq 0$.
So, assume $p\neq 3$. Then $\det(B_{(2, 3, 4, 5, 8 )})=
 \frac{(\omega^2-\omega) (a + 2)^3}{36}(3a + (5\omega + 1))^3$  is nonzero unless  $a=-\frac{5\omega+1}{3}$, in which case $p\neq 5$ and 
$\det(B_{(1, 2, 4, 6, 8)})=\frac{2^6\cdot 5}{3^4}\neq 0$. 
It follows that $\rk(B)=5$.\\
\textbf{Case (2).} We may assume $\lambda \neq \frac{a\omega-1}{2}$ by previous analysis, and clearly $\lambda \neq 0$.
If $p=3$, from (2) we get $\lambda=\pm \iota$, where $\iota$ has order $4$.
For $\lambda\in \{\iota, -\iota\}$, $\det(B_{(  1, 2, 6, 7, 8)})=0$ if and only if $a=\pm \iota$,
in which case $\det(B_{(  1,2,3,4,5)})\neq 0$.
So, we may assume $p\neq 3$ and set $a=\frac{2(1-\omega)}{3}\lambda -1+\frac{2+\omega}{3\lambda}$. 
Then,  
$\det(A_{(1,2,3,4,5)})=\frac{2 d_1}{3^2\lambda^2}$,
$\det(A_{(1,2,3,4,6)})=\frac{2 d_2}{3^3\lambda^2}$ and 
$\det(A_{( 1,2,3,4,7)})=\frac{2 d_3}{3^2\lambda^2}$, where $d_1,d_2,d_3$ can be viewed as polynomials in $\lambda$.
Setting $\delta=\gcd(d_2,d_3)$, we get
$\delta=c_2 d_2 + c_3 d_3$, where
$$\begin{array}{rcl}
4\delta & =& 4\lambda^6 +2 (2\omega + 1)\lambda^5 - (2\omega + 1)\lambda^4 
-(\omega - 1)\lambda^3 - (\omega + 5)\lambda^2 -  (5\omega + 1)\lambda + \omega,\\
144 c_2  & =& 2(7\omega + 11)\lambda^2 - (7\omega - 25)\lambda - 2(\omega + 5),\\
48 c_3 & =& -4(\omega +6)\lambda^4 -2 (9\omega + 14)\lambda^3 - (19\omega + 32)\lambda^2 
+ 8(2\omega -1)\lambda  + 4(2\omega + 5).
  \end{array}$$
Finally, $\gcd(d_1,\delta)=1$, whence the contradiction $\rk(A)=5$.
\end{proof}

\begin{lemma}\label{irr+tau2}
Let $p$ be odd. If $a\neq -2$, the group $\tau^2(H)$ is absolutely irreducible.
\end{lemma}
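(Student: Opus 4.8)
The plan is to reproduce, \emph{mutatis mutandis}, the argument of Lemma~\ref{irr+tau}, now applied to the explicit matrices $\tau^2(x)$ and $\tau^2(y)$ recorded above for $p>2$. First I would write $x$ for $\tau^2(x)$ and $y$ for $-\tau^2(y)$; the latter has order $3$ because $\tau^2(y)^3=-\I_8$, and with this normalization $\omega$ is an eigenvalue of $y$ with a two-dimensional eigenspace. A single eigenvector computation then produces $E=E_\omega(y)=\langle v_1,v_2\rangle$, with $v_1,v_2$ supported on the same coordinates as the $p=2$ vectors $e_1+\omega^{-1}e_2+\omega e_{-4}$ and $e_4+\omega^{-1}e_{-1}+\omega e_{-2}$ of Lemma~\ref{irr+2}, up to the signs forced by the negation.

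Next I would set up the usual dichotomy: let $\{0\}\neq U$ be a $\tau^2(H)$-invariant subspace with $\dim U\le4$ and let $\overline U$ be a $\tau^2(H)^T$-invariant complement. In the first case $U\cap E=\{0\}$, so $E_{\omega^{-1}}(y)=E_\omega(y^T)\le\overline U$; writing $v_i^-=v_i(\omega^{-1})$ I would form a matrix $M$ from the images of $v_1^-,v_2^-$ under short words in $x^T$, $(xy)^T$, $(xyx)^T$ and show that $\det M$ is a product of a few linear and quadratic factors in $a$ (with coefficients in $\Z[\omega]$), nonzero outside a short list of exceptional values. For each exceptional $a$ a companion matrix $N$, built from slightly longer words, has nonzero determinant, yielding the contradiction $U=\{0\}$.

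In the second case there is $0\neq u\in E\cap U$; writing $u=\lambda v_1+v_2$ (the subcase $u=v_1$ being disposed of by a single nonvanishing $5\times5$ minor), I would build a matrix $A$ from the orbit of $u$ under words in $x,y,yx$ and exhibit a minor $\det(A_{(\cdots)})\neq0$, forcing $\rk A=5>\dim U$. The finitely many obstructing $\lambda$ will be the roots of one linear and one quadratic polynomial over $\Z[\omega]$; for these I would substitute the resulting expression for $a$, reduce the three competing minors to polynomials $d_1,d_2,d_3$ in $\lambda$, set $\delta=\gcd(d_2,d_3)$ with an explicit B\'ezout identity $\delta=c_2d_2+c_3d_3$, and verify $\gcd(d_1,\delta)=1$, so that no common root survives.

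The content is entirely computational: there is no new idea beyond Lemma~\ref{irr+tau}. The main obstacle is bookkeeping---ensuring that every exceptional triple $(a,\lambda,p)$ is covered and, above all, that the governing polynomials remain coprime over $\F$. The genuine risk lies in the small characteristics $p=3,5,7$, where a minor nonzero in $\Q[\omega]$ might vanish modulo $p$; I would rule these out by recording the exact constant value (typically a power of $2$ times a unit in $\Z[\omega]$) of an alternative minor in each such case, exactly as the $\omega=2$, $p=7$ degeneration is handled in Lemma~\ref{irr+tau}.
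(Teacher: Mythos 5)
Your plan does not follow the paper's own proof of Lemma \ref{irr+tau2}, and the difference matters. The paper does not transplant the eigenspace-of-$y$ argument of Lemma \ref{irr+tau}; it transplants Lemma \ref{irr+odd} instead. It takes $\eta=[x,y]$, whose characteristic polynomial is $(t+1)^4\psi(t)$, computes $V_{-1}(\eta)=\langle v_1,v_2\rangle$, and splits according to whether $\eta_{|U}$ has the eigenvalue $-1$ (then $5\times5$ minors of a matrix built from the orbit of $v_2$ or $v_1$ finish the job) or not (then $\overline U$ contains the eigenvector $e_2+e_3$ of $\eta^T$, and a single $8\times 8$ determinant equal to $2a(a+2)^4$ gives $\dim\overline U=8$). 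The reason this choice of $\eta$ is better than working with $E_\omega(-\tau^2(y))$ is the relation $\eta^x=\eta^{-1}$, automatic here because $x$ is an involution: it makes $V_{-1}(\eta)$ stable under $x$, so from $u=v_1+\lambda v_2\in U$ one gets $\tfrac12(u+xu)=v_1\in U$, and the unknown parameter $\lambda$ disappears at once. In your route the space $E_\omega(y)$ is not $x$-stable, so you are forced into the worst part of Lemma \ref{irr+tau}, namely Case (2): expressing $a$ in terms of $\lambda$, producing three minors as polynomials $d_1,d_2,d_3$ in $\lambda$, and proving coprimality through explicit B\'ezout coefficients, with separate treatment of the small characteristics.

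Beyond being heavier, your proposal has a genuine gap: every step that constitutes the actual proof is asserted rather than carried out. You exhibit no words, no minors, no list of exceptional values of $a$, no polynomials $d_i$, and no B\'ezout identities; you only promise that analogues of those in Lemma \ref{irr+tau} exist. There is no a priori reason that the word choices tuned to the pair $(\tau(x),\tau(y))$ remain nondegenerate for $(\tau^2(x),\tau^2(y))$ --- the paper itself uses different words and different minors in each of Lemmas \ref{irr+2}, \ref{irr+odd}, \ref{irr+tau} and \ref{irr+tau2}, precisely because each pair of matrices degenerates differently. For a statement whose entire content is these explicit verifications (including the danger, which you correctly flag, of a minor that is nonzero in characteristic $0$ vanishing modulo a small prime), a proof must contain them; as it stands your text is a template into which the decisive computations still have to be inserted.
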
 

\begin{proof}
We write $x$ for $\tau^2(x)$ and $y$ for $-\tau^2(y)$.
The characteristic polynomial of $\eta=[x,y]$ is $\chi_\eta(t)=(t+1)^4 \psi(t)$ with 
$$\psi(t)=t^4 + (a^2 - 4) t^3 - (a^2 - 6) t^2 + (a^2 - 4) t + 1.$$
By direct computations, the eigenspace $V_{-1}(\eta)$ is generated by the vectors
$$v_1 = ae_1 -e_2+ e_3 +a e_4 -e_{-1} -a e_{-3}+(a^2+a-1)e_{-4} \equad 
v_2= e_{-2}+e_{-3}.$$

Let $U$ be an $\tau^2(H)$-invariant subspace of $\F^8$ and  $\overline{U}$ be an $(\tau^2(H))^T$-invariant complement of $U$.
We may assume $\dim U\leq 4$. Suppose first  that $\eta_{|U}$ has the eigenvalue $-1$.
Let $M$ be the matrix whose columns are the vectors $u,\; yu,\; y^2u,\; xy^2u,\; yxy^2u$, where $u \in U$.
If $u=v_2\in U$, then $\det(M_{( 1, 3, 5, 6, 7 )})=a+2\neq 0$.
So, assume $u=v_1+\lambda v_2\in U$ for some $\lambda \in \F$.
Since $\frac{1}{2} ( u+xu) = v_1$, we obtain that $v_1 \in U$.
Taking $u=v_1$, we have $\det(M_{(1, 2, 3, 4, 5)})=-d_1$ and 
$\det(M_{(1, 2, 3, 5, 6)})=(a+2)(a+\omega) (a+\omega^2) d_2$, where
$$\begin{array}{rcl}
d_1 & =& 2a^9 + 7a^8 - 5a^7 - 30 a^6 - 3 a^5 + 26 a^4 + 23 a^3 + 15 a^2 - 58a + 16,\\
d_2 & = & 2a^6 + 5a^5 - 8a^4 - 17a^3 + 16a^2 + 13a - 8.
  \end{array}$$
If $a\neq -\omega^{\pm 1}$, from $\gcd(d_1,d_2)=1$ we conclude that the matrix $M$ has rank $5$.
For $p=3$ and $a=-1$, we have $\det(M_{(1,2,3,4,6)})=1$;
for $p\neq 3$ and $a =  -\omega^{\pm 1}$, we have  $d_1\in \{2^6(\omega - 1), -2^6(\omega + 2)\}$, whence  $d_1\neq 0$.
So, also in these cases, $\rk(M)=5$.

Now, suppose that $\eta_{|U}$ does not have the eigenvalue $-1$. 
Then $\overline{U}$ contains $V_{-1}(\eta^T)$ and, in particular, contains the eigenvector 
$u=e_2+e_3$.
The matrix whose columns are the vectors 
$$u,\; y^T u, \;(y^2)^T u,\; (yx)^T u,\;  (yxy^2)^T u,\; ((yx)^2)^T u,\;(yxy)^T u,\; (y\eta)^T u $$
has determinant $2a(a+2)^4\neq 0$. We conclude that $\dim(\overline{U})=8$.
\end{proof}

\begin{corollary}\label{Om7}
If $p=2$, then $H$ is not contained in any maximal subgroup $\Sp_6(q)$ of class $\mathcal{S}$.
If $p$ is odd and $a\neq \pm 2$, then $H$ is neither contained in any maximal subgroup of class $\mathcal{C}_4$
nor in any maximal subgroup 
$2^\cdot\Omega_7(q)$ of class $\mathcal{S}$.
\end{corollary}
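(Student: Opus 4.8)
The plan is to exploit the triality automorphism $\tau$ exactly in the manner announced at the start of this section: both the spin-type subgroups in class $\mathcal{S}$ and the subgroups in class $\mathcal{C}_4$ are images, under a suitable power of $\tau$, of \emph{reducible} subgroups lying in class $\mathcal{C}_1$. Since an absolutely irreducible subgroup cannot be contained in any member of $\mathcal{C}_1$, the absolute irreducibility of $\tau(H)$ and $\tau^2(H)$ already established in Lemmas \ref{irr+2}, \ref{irr+tau} and \ref{irr+tau2} will yield the assertions formally, with no new matrix computation. Concretely, if $\tau^i(M)\in\mathcal{C}_1$ for the appropriate $i\in\{1,2\}$, then $H\le M$ would force $\tau^i(H)=\langle\tau^i(x),\tau^i(y)\rangle$ into the reducible subgroup $\tau^i(M)$, a contradiction. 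So the whole argument reduces to reading off the $\tau$-action on the relevant maximal subgroups from \cite[Tables 8.50--8.53]{Ho} (equivalently from Kleidman \cite{K}).

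First I would treat the spin subgroups. For $q$ odd the stabiliser of a nonsingular $1$-space is a subgroup $\mathrm{O}_7(q)$ of class $\mathcal{C}_1$, and for $q$ even it is $\Sp_6(q)$, again in $\mathcal{C}_1$; in either case it acts reducibly on the natural module, fixing a $1\oplus7$ decomposition. Triality cyclically permutes the natural module and the two half-spin modules, and under this permutation the nonsingular point stabiliser is carried to the copy of $\Omega_7(q)$ (resp. $\Sp_6(q)$) acting through an $8$-dimensional spin representation, which is precisely the maximal subgroup $2^\cdot\Omega_7(q)$ (resp. $\Sp_6(q)$) of class $\mathcal{S}$. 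Hence $H\le 2^\cdot\Omega_7(q)$ would put $\tau^i(H)$ inside the reducible $\mathrm{O}_7(q)\in\mathcal{C}_1$, against the absolute irreducibility of $\tau(H)$ and $\tau^2(H)$; and likewise for $\Sp_6(q)$ when $p=2$.

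Next I would handle class $\mathcal{C}_4$, which occurs only for $q$ odd and is of type $\Sp_2(q)\otimes\Sp_4(q)$. On the natural module $\F_q^2\otimes\F_q^4$ this subgroup acts irreducibly, so it is not itself reducible; the key point is that its restriction to the half-spin modules is reducible. Indeed, via $\Sp_2\cong\mathrm{Spin}_3$ and $\Sp_4\cong\mathrm{Spin}_5$ the embedding $\Sp_2(q)\times\Sp_4(q)\hookrightarrow\SO_8^+(q)$ decomposes each spin module into nonsingular orthogonal subspaces $\F_q^3\perp\F_q^5$. Thus a suitable power of $\tau$ carries the $\mathcal{C}_4$ subgroup into the stabiliser of a nonsingular $3$-space, a member of $\mathcal{C}_1$, and $H\le M\in\mathcal{C}_4$ would again place $\tau(H)$ in a reducible subgroup, contradicting Lemma \ref{irr+tau}. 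For $q$ even there is no orthogonal $\mathcal{C}_4$ subgroup, the tensor product of two symplectic spaces carrying an alternating rather than a quadratic form, which explains why this class is absent from the first assertion.

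The genuinely delicate step is not the representation theory of $H$, which the preceding lemmas have already discharged, but pinning down precisely how $\tau$ permutes the classes of maximal subgroups: I must verify that the spin-type $\mathcal{S}$ subgroup and the $\mathcal{C}_4$ subgroup are $\tau$-images of $\mathcal{C}_1$ members, and not of one another or of some irreducible class. I would secure this from the triality data in \cite[Tables 8.50--8.53]{Ho}, checking the half-spin branching in the $\mathcal{C}_4$ case by the $\mathrm{Spin}_3\times\mathrm{Spin}_5$ computation indicated above. Once these identifications are in place the corollary follows at once, the hypotheses on $a$ entering only through the cited irreducibility lemmas.
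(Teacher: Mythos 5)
Your proposal is correct and follows essentially the same route as the paper: the paper's proof likewise observes that, by \cite[Table 8.50]{Ho} and \cite{K}, each listed maximal subgroup $M$ satisfies $\tau^i(M)\in\mathcal{C}_1$ for some $i\in\{1,2\}$, so $H\le M$ would force $\tau^i(H)$ to be reducible, contradicting Lemmas \ref{irr+2}, \ref{irr+odd}, \ref{irr+tau} and \ref{irr+tau2}. The only difference is that you spell out the spin-module/$\mathrm{Spin}_3\times\mathrm{Spin}_5$ branching that justifies how $\tau$ moves these subgroups into $\mathcal{C}_1$, whereas the paper simply cites this from Kleidman's tables.
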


\begin{proof}
Suppose that $H$ is contained in  one of the maximal subgroups described in the statement.
By \cite[Table 8.50]{Ho} and \cite{K}, $\tau^i(M)$ belongs to the class $\mathcal{C}_1$ for some $i=1,2$. In particular,
$\tau^i(H)$ is reducible, in contrast with Lemmas \ref{irr+2}, \ref{irr+odd}, \ref{irr+tau} and \ref{irr+tau2}.
\end{proof}

\section{The $(2,3)$-generation of $\Omega_8^+(q)$}\label{gen8+}

\begin{lemma}\label{+C2}
Suppose $a\neq \pm 2$ if $p$ is odd.
Then $H$ is neither contained in any maximal subgroup of classes $\mathcal{C}_2, \mathcal{C}_6$
nor in any maximal subgroup $2^\cdot \Omega_8^+(2)$, $2^\cdot Sz(8)$, $2\times \Alt(10)$, $2^\cdot \Alt(10)$ of class $\mathcal{S}$.
\end{lemma}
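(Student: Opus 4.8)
The plan is to rule out each of the listed maximal overgroup classes in turn, using the absolute irreducibility of $H$ (established in Lemmas \ref{irr+2}, \ref{irr+odd} for $H$ itself, together with Corollary \ref{Om7}) as the backbone, and exploiting numerical invariants of the specific generators $x,y$ that no imprimitive or normalizer-of-a-classical-group subgroup can match. For the class $\mathcal{C}_2$ (stabilizers of a decomposition $V=\bigoplus V_i$ into subspaces permuted by the group, or into a sum of totally singular subspaces), the standard strategy is: since $H$ is absolutely irreducible, any $\mathcal{C}_2$-overgroup forces an imprimitivity system, and one shows $H$ cannot preserve such a system by exhibiting an element of $H$ whose action is incompatible with permuting a fixed small number of blocks. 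Concretely I would examine the element $\eta$ of Lemma \ref{irr+odd} (resp.\ a suitable power of $xy$ when $p=2$): a group in $\mathcal{C}_2$ has a normal subgroup acting block-diagonally, so the eigenvalue structure and the characteristic polynomial $\chi_\eta$ must factor compatibly with the block sizes $8=8,\ 4+4,\ 2+2+2+2,\ 1\times 8$; the mixed multiplicities of the eigenvalue $1$ (algebraic $\geq 4$, geometric $2$) already obstruct the purely monomial case, and I would use $\Upsilon(\eta)$ (the prime divisors of the order) to kill the remaining decompositions.

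For class $\mathcal{C}_6$ (normalizers of symplectic-type $r$-groups, here essentially a $2^{1+6}$-type extraspecial normalizer inside $\Omega_8^+$), the overgroup has order divisible only by a very restricted set of primes and its projective image lies in a specific extension; the clean argument is to produce an element of $H$ whose order, via $\Upsilon(g)$, does not divide the order of any such normalizer. Since $|H|$ is large (indeed $H$ is all of $\Omega_8^+(q)$ once generation is proved, but here we only know it contains $\eta$ with $p\mid o(\eta)$ and is absolutely irreducible), I expect a single order or characteristic-polynomial-degree obstruction to suffice: a $\mathcal{C}_6$ subgroup acts on $V=\F_q^8$ with a fixed $r$-group structure, so all composition factors have bounded order, and the element $\eta$ of $p$-divisible order for $q=p^f$ with $f$ large, or an element realizing $\F_p[a]=\F_q$, cannot be accommodated.

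For the four sporadic $\mathcal{S}$-class candidates $2^{\cdot}\Omega_8^+(2)$, $2^{\cdot}Sz(8)$, $2\times\Alt(10)$, $2^{\cdot}\Alt(10)$, the decisive tool is again $\Upsilon$ together with the dependence of the generators on the parameter $a$ with $\F_p[a]=\F_q$. These groups have fixed order, so they can only occur for finitely many $q$; for each I would list $\Upsilon$ of the candidate and compare with $\Upsilon(\eta)$ or with the order of $x$ or of a short word. The Suzuki group $Sz(8)$ lives only over $\F_{2^f}$ with specific field constraints and has $|Sz(8)|=2^6\cdot 5\cdot 7\cdot 13$; the alternating candidates force $q$ into a tiny range; and $2^{\cdot}\Omega_8^+(2)$ is excluded once $q>2$ by a field-of-definition argument since our $a$ generates $\F_q$ over $\F_p$. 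I expect the main obstacle to be the $\mathcal{C}_2$ case, because imprimitivity must be excluded uniformly in $q$ and in both parities of $p$, and the totally-singular decomposition variant of $\mathcal{C}_2$ interacts with the orthogonal form in a way that the plain eigenvalue count does not immediately settle; here I would lean on the explicit vectors $v_1,v_2$ spanning $V_1(\eta)$ (resp.\ $V_\omega(y)$) and check directly that no $H$-invariant block system is compatible with the isotropy structure of the form $Q$.
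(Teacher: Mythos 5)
Your toolkit (the element $\eta$ of Lemma \ref{irr+odd}, prime divisors, element orders, characteristic polynomials) overlaps with the paper's, and your reduction of the monomial $\mathcal{C}_2$, $\mathcal{C}_6$ and sporadic $\mathcal{S}$ cases via ``$p$ divides the order of $\eta$ while $|M|$ has small prime support'' is indeed how the paper's Case 1 begins. However, your treatment of the genuinely imprimitive $\mathcal{C}_2$ cases would fail. First, the claim that the multiplicity pattern of the eigenvalue $1$ of $\eta$ (algebraic $\geq 4$, geometric $2$) ``already obstructs the purely monomial case'' is false: monomial matrices in characteristic $p$ need not be semisimple; for $p=3$, a signed permutation matrix consisting of one positive $3$-cycle, one positive fixed point and four negated fixed points has eigenvalue $1$ with algebraic multiplicity $4$ and geometric multiplicity exactly $2$. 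Second, $\Upsilon(\eta)$ is powerless exactly where the lemma is hard: the stabilizer of a decomposition into four $2$-spaces has type $\Or_2^{\pm}(q)\wr\Sym(4)$, whose order is divisible by $2$, $3$ and the primes dividing $q\pm 1$, so nothing is gained when $p\in\{2,3\}$ --- and these values must be covered (all even $q\geq 4$, and $q=3^f>3$). The paper's actual argument there is structural: for the block-permutation homomorphism $\nu\colon H\to\Sym(4)$, any odd prime dividing $|\ker\nu|$ divides $q\pm 1$ (the blockwise action lands in the dihedral groups $\Or_2^{\pm}(q)$), so $p\mid o(\eta)$ forces $p=3$ when $p$ is odd; then $y$ acts as a $3$-cycle on the blocks, $x$ has one of two explicit block shapes $\xi_1,\xi_2$, and comparing the characteristic polynomial of $\xi\zeta$ with $\chi_{xy}(t)$ coefficientwise gives the contradiction (this also disposes of $p=2$). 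Likewise, for a decomposition into two $4$-spaces your ``compatible factorization'' test is empty: $x$ must swap the summands and $y$ preserve them, so $\eta$ lies in $\ker\nu$ and $\chi_\eta(t)=(t-1)^4\psi(t)$ does factor into two quartics --- no contradiction. The paper instead notes that $xy$ interchanges the summands, hence $\tr(xy)=0$, which pins $a$ to $\omega^{\pm 1}$ ($p=2$) or $-1$ ($p$ odd), and then $\tr((xy)^3)\neq 0$, resp.\ $\tr(y(xy)^3)=4\neq 0$, finishes.

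For the fixed groups ($\mathcal{C}_6$ and the four listed $\mathcal{S}$-subgroups), your reduction to finitely many $q$ is sound in outline, but the finishing moves are miscalibrated. The field-of-definition argument for $2^\cdot\Omega_8^+(2)$ is vacuous: that subgroup is maximal only for $q=p$ odd, and then $\F_p[a]=\F_p$ holds for every $a\in\F_p^*$. And $2^\cdot Sz(8)$ does not ``live over $\F_{2^f}$'' in this context: it occurs as a cross-characteristic maximal subgroup of $\Omega_8^+(5)$, so no characteristic-$2$ constraint applies; moreover $13\in\Upsilon(Sz(8))$, so even a prime-divisor comparison fails when $q=5$ and $a=-1$, where $\eta$ has order $5\cdot 13$. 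What actually closes these cases in the paper is a short list of explicit element orders for $q\in\{5,7\}$ and each admissible $a$: for $q=5$, $\eta yxy$ has order $2\cdot 31$ ($a=1$) and $\eta$ has order $5\cdot 13$ ($a=-1$); for $q=7$, the elements $\eta y^2xy$, $y\eta^3$, $y\eta^2$ have orders $19$, $9\cdot 19$, $43$ for $a=1$, $a=\pm3$, $a=-1$ respectively --- none of these is an element order of any candidate $M$. You gesture at ``the order of $x$ or of a short word,'' but without exhibiting such words and their orders the argument does not close.
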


\begin{proof} 
The characteristic polynomial of $xy$ is
$$\chi_{xy}(t)=\left\{\begin{array}{ll}
t^8 + (a^2 + a + 1)t^7 + a^2 t^6 + (a + 1)t^5 + (a + 1)^4 t^4+ \\
(a + 1)t^3 + a^2t^2 + (a^2 + a + 1)t + 1 & \textrm{if } p=2,\\
t^8 + (a + 1) t^7 - a t^6 - 2 t^5 + a^2 t^4 - 2 t^3 - a t^2 + (a + 1) t + 1    & \textrm{if } p>2.          
\end{array}\right. $$
Suppose that our claim is false. Note that $H$ is absolutely irreducible by Lemmas \ref{irr+2} and \ref{irr+odd}.\\
\textbf{Case 1.} $H\leq M$ with $M\in\mathcal{C}_2$ monomial, or $M$ one of the remaining maximal subgroups of the statement.\\
Then $q=p$ is odd and $\Upsilon(M) \subseteq\{2,3,5,7\}$. The element $\eta=(xy^2)^2(xy)^2$ of Lemma \ref{irr+odd} has order
divisible by $p$, whence  $q\in \{5,7\}$. So, assume $q=5$. If $a=1$,  the order of $\eta yxy$ is $2\cdot 31$;
if $a=-1$, the order of $\eta$ is $5\cdot 13$.
Finally, assume $q=7$. If $a=1$, then  $\eta y^2 x y$ has order $19$;  
if $a=\pm 3$, the element $y \eta^3$ has order $9\cdot 19$;
if $a=-1$, the element $y\eta^2 $ has order $43$.
In all these cases, we get a contradiction considering the order of $M$.
\smallskip

\noindent\textbf{Case 2.} $H\leq M \in \mathcal{C}_2$ fixes a decomposition 
$\F_q^8=\left\langle v_1,w_1\right\rangle\oplus \dots \oplus \left\langle v_4,w_4\right\rangle$.\\
Let $\nu: H \to \Sym(4)$ be the corresponding homomorphism and let $r$ be an odd prime dividing 
$|\ker(\nu)|$. Then $r$ divides $q \pm 1$. If $p$ is odd, consideration of $\eta$ as in Lemma \ref{irr+odd} 
gives $p = 3$.
We may assume that $y$ acts as $\zeta=(v_1,v_2,v_3)(w_1,w_2,w_3)$ and, considering its rational form, that $yv_4=v_4$, $yw_4=w_4$.
Hence, $x$ must act either as $\xi=\xi_1$ or as $\xi=\xi_2$, where for some  $A,B,C\in \GL_2(q)$:
$$\xi_1=\begin{pmatrix}
0&0&0&A\\
0&B&0&0\\
0&0&C&0\\
A^{-1}&0&0&0
\end{pmatrix},\quad
\xi_2=\begin{pmatrix}
0&0&0&A\\
0&0&B&0\\
0&B^{-1}&0&0\\
A^{-1}&0&0&0
\end{pmatrix}, \quad
B=\begin{pmatrix}
\beta_1&\beta_2\\
\beta_3&\beta_4
\end{pmatrix}.$$

Setting $\Delta=\det(B)$, the characteristic polynomial of $\xi\zeta$ is respectively:
$$\begin{array}{rcl}
p_1(t) & =&  t^8 + \alpha t^4 + \beta,\\ 
p_2(t) & =& 
t^8 - (\beta_1+\beta_4) t^7 + \Delta t^6 - \frac{\beta_1 + \beta_4}{\Delta} t^5 +\frac{
    (\beta_1 + \beta_4)^2}{\Delta} t^4 - (\beta_1 +\beta_4) t^3 + \\
    &&\frac{ 1}{\Delta} t^2 - 
\frac{\beta_1 + \beta_4}{\Delta} t + 1.
\end{array}$$
Comparison of $p_1(t)$ with $\chi_{xy}(t)$ gives immediately a contradiction.
Next we compare $p_2(t)$ with $\chi_{xy}(t)$. If $p=2$,
considering the coefficients of $t^6$ and of $t^2$ we get 
the contradiction $\Delta=a^2=\frac{1}{\Delta}$.
If $p=3$,  considering the coefficients of $t^7$ and of $t^3$ we get 
$a+1=-(\beta_1+\beta_4)=-2$, whence the contradiction $a=0$.
\smallskip

\noindent \textbf{Case 3.} $H\leq M \in \mathcal{C}_2$ fixes a decomposition 
$\F_q^8=V_1\oplus V_2$, with $\dim(V_i)=4$. \\
Clearly we must have $xV_1=V_2$ and $yV_i=V_i$, $i=1,2$.
It follows that $xy$ has trace $0$. 
This implies  $a=\omega^{\pm 1}$ if $p=2$, $a=-1$ if $p$ is odd.
If $p=2$ from $a=\omega^{\pm 1}$ we get the contradiction 
$0=\tr ((xy)^3)=\omega^{\mp 1}$. 
If $p$ is odd, from  $a=-1$ we get the contradiction
$0=\tr(y(xy)^3)=4$.
\end{proof}

\begin{lemma}\label{+C5}
The group $H$ is not contained in any maximal subgroup of class $\mathcal{C}_5$.
\end{lemma}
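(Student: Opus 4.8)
The plan is to exploit the defining feature of the class $\mathcal{C}_5$: every maximal subgroup $M\in\mathcal{C}_5$ is a subfield subgroup, so up to conjugacy in $\GL_8(q)$ the natural module is an $\F_{q_0}$-form of $\F_q^8$ for some proper subfield $\F_{q_0}$, with $q_0=p^{f/r}$ for a prime $r\mid f$. In particular $M$, and hence any $H\le M$, is conjugate to a subgroup of $\GL_8(q_0)$. Since the characteristic polynomial of a matrix is invariant under conjugation, this would force $\chi_{xy}(t)\in\F_{q_0}[t]$; the whole argument then reduces to showing that $\chi_{xy}(t)$ cannot have all its coefficients in a proper subfield of $\F_q$.

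First I would simply read off the explicit form of $\chi_{xy}(t)$ already computed in the proof of Lemma \ref{+C2}. In both cases one of its coefficients equals $a+1$: the coefficient of $t^5$ when $p=2$, and the coefficient of $t^7$ when $p>2$. Consequently $a+1\in\F_{q_0}$, and therefore $\F_q=\F_p[a]=\F_p[a+1]\subseteq\F_{q_0}$. As $\F_{q_0}$ is a proper subfield of $\F_q$, this is the desired contradiction, so no such $M$ can contain $H$. Note that the statement is vacuous when $q$ is prime, since then $\F_q$ has no proper subfield and $\mathcal{C}_5$ is empty.

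The only genuinely delicate point is the first step: one must be sure that containment in a $\mathcal{C}_5$-subgroup really confines $\chi_{xy}(t)$ to $\F_{q_0}[t]$, i.e.\ that no global scalar twist intervenes to move the coefficients out of $\F_{q_0}$. This is guaranteed by the structure of the subfield subgroups recorded in \cite[Tables 8.50--8.53]{Ho}: for the orthogonal groups the module underlying a $\mathcal{C}_5$-member is genuinely an $\F_{q_0}$-form of $\F_q^8$, and the only scalars involved lie in $\{\pm1\}\subseteq\F_p\subseteq\F_{q_0}$, so the coefficients of $\chi_{xy}(t)$ indeed lie in $\F_{q_0}$. It is worth observing that, unlike the preceding lemmas, this argument needs neither the triality automorphism nor the absolute irreducibility of $H$: it rests solely on the subfield generated by the coefficients of a single characteristic polynomial, which is exactly where the hypothesis $\F_p[a]=\F_q$ enters.
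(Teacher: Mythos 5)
Your overall strategy---confining a conjugation-invariant of $H$ to $\F_{q_0}$ and contradicting $\F_p[a]=\F_q$---is exactly the paper's, but the step you yourself flag as the ``only genuinely delicate point'' is where the proof breaks, and the justification you give for it is false. Containment in a $\mathcal{C}_5$-subgroup yields only that, after one conjugation, $x=\vartheta_1x_0$ and $y=\vartheta_2y_0$ with $x_0,y_0\in\GL_8(q_0)$ and $\vartheta_1,\vartheta_2$ scalars; these scalars need not lie in $\{\pm1\}$, nor even in $\F_{q_0}$. Concretely, let $q$ be odd with $q=q_0^2$, let $\mu\in\F_{q_0}^*$ be a nonsquare and $\lambda\in\F_q^*$ with $\lambda^2=\mu^{-1}$ (such $\lambda$ exists in $\F_q$ and lies outside $\F_{q_0}$). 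Then $g=\diag(\lambda^{-1}\I_4,\lambda\I_4)=\lambda\,\diag(\mu\I_4,\I_4)$ has determinant $1$ and spinor norm $\lambda^4=(\lambda^2)^2$, a square, so $g\in\Omega_8^+(q)$; conjugation by $g$ coincides with conjugation by the similarity $\diag(\mu\I_4,\I_4)$ of the $\F_{q_0}$-form, so $g$ normalizes $\Omega_8^+(q_0)$ and lies in the corresponding $\mathcal{C}_5$-subgroup. Yet $g$ is not $\pm1$ times any matrix over $\F_{q_0}$, and indeed no conjugate of this subgroup sits inside $\GL_8(q_0)$. So the assertion that ``the only scalars involved lie in $\{\pm1\}$'' is wrong, and no reading of \cite[Tables 8.50--8.53]{Ho} can repair it: the scalar twist is intrinsic to subfield subgroups when $q$ is an odd square.

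This matters for your computation because the coefficient of $t^k$ in $\chi_{xy}$ equals $\lambda^{8-k}c_k$, where $\lambda=\vartheta_1\vartheta_2$ and $c_k\in\F_{q_0}$, and you chose odd-degree coefficients ($t^5$ for $p=2$, $t^7$ for $p$ odd): these carry an odd power of $\lambda$, so you only obtain $a+1\in\lambda\F_{q_0}$, not $a+1\in\F_{q_0}$, and the argument collapses precisely when $q$ is an odd square. The paper avoids the issue in one stroke by taking the trace of the commutator: $[\vartheta_1x_0,\vartheta_2y_0]=[x_0,y_0]$, so the unknown scalars cancel identically and $\tr([x,y])\in\F_{q_0}$ unconditionally; since $\tr([x,y])=a^2$ for $p=2$ and $2a$ for $p$ odd, this gives $a\in\F_{q_0}$ and the desired contradiction. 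If you want to keep $\chi_{xy}$, you must first bound the scalars---from $x^2=\I_8$, $y^3=\I_8$ and $\det(xy)=1$ one gets $\vartheta_1^2,\vartheta_2^3,\lambda^8\in\F_{q_0}$, hence $\lambda^2=\lambda^8(\lambda^6)^{-1}\in\F_{q_0}$---and then use an even-degree coefficient, e.g.\ that of $t^6$, which equals $a^2$ for $p=2$ and $-a$ for $p$ odd. (In characteristic $2$ one can further deduce $\lambda\in\F_{q_0}$, since squaring is bijective on $\F_{q_0}$, which is why your version happens to be repairable there; but for $q$ an odd square the odd-degree coefficients you used remain unusable.)
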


\begin{proof}
Suppose the contrary. Then there exists $g\in \GL_{8}(\F)$ such that
$x^g=\vartheta_1 x_0$, $y^g=\vartheta_2 y_0$, with $x_0,y_0\in \GL_{8}(q_0)$ and $\vartheta_i\in \F$.
If $p=2$, we have $\tr([x,y])=a^2$; if $p$ is odd, we have $\tr([x,y])= 2a$.
From  $\tr ([x,y])=\tr\left([x^g,y^g]\right)= \tr ([x_0,y_0])$ it follows that 
$a\in \F_{q_0}$. So,  $\F_q=\F_p[a]\leq \F_{q_0}$ implies $q_0=q$.
\end{proof}

\begin{lemma}\label{+C5S}
The group $H$ is not contained in any maximal subgroup of type $\Omega_8^-(\sqrt{q})$ of class~$\mathcal{S}$.
\end{lemma}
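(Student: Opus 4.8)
The plan is to follow the same field-descent strategy used in the proof of Lemma \ref{+C5} for the subfield subgroups of class $\mathcal{C}_5$. The key observation is that a maximal subgroup $M$ of type $\Omega_8^-(\sqrt q)$ is, up to conjugacy in $\GL_8(\F)$, realized by matrices over the subfield $\F_{q_0}$ with $q_0=\sqrt q$. Indeed, the embedding $\Omega_8^-(q_0)\hookrightarrow \Omega_8^+(q_0^2)=\Omega_8^+(q)$ arises by retaining the natural $8$-dimensional representation of $\Omega_8^-(q_0)$, whose matrices have entries in $\F_{q_0}$, and extending the scalars from $\F_{q_0}$ to $\F_q$: under this extension the anisotropic minus-type form becomes hyperbolic of plus type. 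Hence, in a suitable basis, $M$ consists of matrices with entries in $\F_{q_0}$.

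First I would assume, for a contradiction, that $H\leq M$ with $M$ of type $\Omega_8^-(\sqrt q)$. By the remark above there exists $g\in \GL_8(\F)$ such that $x^g=\vartheta_1 x_0$ and $y^g=\vartheta_2 y_0$, with $x_0,y_0\in \GL_8(q_0)$ and $\vartheta_1,\vartheta_2\in\F$, the scalars accounting for the similarity factors allowed in the relevant normalizer. Since the $\vartheta_i$ are central, they cancel in the commutator, so $[x^g,y^g]=[x_0,y_0]$ and therefore $\tr([x,y])=\tr([x^g,y^g])=\tr([x_0,y_0])\in \F_{q_0}$.

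Next I would invoke the trace computation already recorded in the proof of Lemma \ref{+C5}: one has $\tr([x,y])=a^2$ if $p=2$ and $\tr([x,y])=2a$ if $p$ is odd. When $p$ is odd, $2a\in\F_{q_0}$ gives immediately $a\in\F_{q_0}$. When $p=2$, squaring is a bijection on $\F_{q_0}$, so $a^2\in\F_{q_0}$ again forces $a\in\F_{q_0}$ (equivalently, $(a^{q_0}-a)^2=a^{2q_0}-a^2=0$). In either case $a\in\F_{q_0}$, whence $\F_q=\F_p[a]\leq \F_{q_0}=\F_{\sqrt q}$, contradicting $\sqrt q<q$. This yields the claim.

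The step I expect to require the most care is the first one: justifying rigorously that containment in a subgroup of type $\Omega_8^-(\sqrt q)$ forces the descent of the field of definition to $\F_{q_0}$ (up to a scalar factor), that is, the passage from the abstract structure listed in \cite[Tables 8.50--8.53]{Ho} to a concrete $\GL_8(\F)$-conjugate defined over $\F_{q_0}$. Once this identification is in hand, the rest is precisely the short trace argument of Lemma \ref{+C5}.
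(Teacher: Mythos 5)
Your first step---the field-descent claim---is exactly where the proof breaks down, and it cannot be repaired as stated. A subgroup of type $\Omega_8^-(q_0)$, $q=q_0^2$, that is conjugate in $\GL_8(\F)$ (even just projectively, i.e.\ up to scalars) to a group of matrices over $\F_{q_0}$ stabilizes a subfield structure on the natural module, and such subgroups constitute class $\mathcal{C}_5$ by definition; that copy is precisely the natural embedding you describe (minus-type form becoming hyperbolic after scalar extension), and it is already excluded by Lemma \ref{+C5}. The subgroups of type $\Omega_8^-(\sqrt{q})$ lying in class $\mathcal{S}$, which are the object of this lemma, are different: they are the images under the triality automorphism $\tau$ of the natural ($\mathcal{C}_5$) copy, and they act on the natural $8$-dimensional module through the half-spin representations of $\Omega_8^-(q_0)$. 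These representations are not realizable over $\F_{q_0}$, even up to scalars: the Frobenius twist $\alpha\mapsto\alpha^{q_0}$ interchanges the two half-spin representations, which are inequivalent (compare the eigenvalues of a generator of a cyclic torus of order $q_0^4+1$); equivalently, if such a copy were projectively defined over $\F_{q_0}$ it would stabilize a subfield structure and hence lie in class $\mathcal{C}_5$, contradicting its appearance in class $\mathcal{S}$. Hence the relations $x^g=\vartheta_1x_0$, $y^g=\vartheta_2y_0$ with $x_0,y_0\in\GL_8(q_0)$ are simply not available for $H\leq M\in\mathcal{S}$, and the trace argument never gets started.

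The correct repair is the one the paper uses, and it is the very reason the matrices $\tau^i(x),\tau^i(y)$ were computed explicitly in Section \ref{gen+}: if $H\leq M$ with $M\in\mathcal{S}$ of type $\Omega_8^-(\sqrt{q})$, then for some $i\in\{1,2\}$ the group $\tau^i(M)$ is the subfield-defined copy, so it is $\tau^i(x)$ and $\tau^i(y)$ that become, up to conjugation and scalars, matrices over $\F_{q_0}$. One then needs traces of words in $\tau^i(x),\tau^i(y)$ as functions of $a$: the paper uses $\tr([\tau(x),\tau(y)])=a^4$ and $\tr(\tau^2(x)\tau^2(y))=a^2$ for $p=2$, and $\tr([\tau(x),\tau(y)])=-2a$, $\tr\left((\tau^2(x)\tau^2(y))^2\right)=2a+1$ for $p$ odd, to conclude $a\in\F_{q_0}$ and reach the contradiction $\F_q=\F_p[a]\leq\F_{q_0}$. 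Your auxiliary observations (cancellation of the scalars $\vartheta_i$ in commutators, and $a^2\in\F_{q_0}\Rightarrow a\in\F_{q_0}$ in characteristic $2$) are correct, but they are applied to the untwisted generators, for which no subfield constraint holds.
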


\begin{proof}
Suppose the contrary, and write $q=q_0^2$.
Then, for some $i=1,2$, there exists $g\in \GL_{8}(\F)$ such that
$(\tau^i(x))^g=\vartheta_1 x_0$, $(\tau^i(y))^g=\vartheta_2 y_0$, with $x_0,y_0\in \GL_{8}(q_0)$ and $\vartheta_i\in \F$.
First, assume $p=2$. We have $\tr([\tau(x),\tau(y)])=a^4$ and $\tr( \tau^2(x) \tau^2(y))=a^2$.
So, if $i=1$ we proceed as in Lemma \ref{+C5}. If $i=2$, then 
$a^2=\tr (\tau^2(x) \tau^2(y))=\tr\left((\tau^2(x))^g (\tau^2(y))^g \right)= \vartheta_1\vartheta_2 \tr (x_0y_0 )$.
It follows that $a\in \F_{q_0}$, whence the contradiction $\F_q=\F_2[a]\leq \F_{q_0}$.

A similar argument can be applied for $p$ odd. In this case, we have $\tr([\tau(x),\tau(y)])=-2a$ and 
$\tr( (\tau^2(x) \tau^2(y))^2)=2a + 1$.
\end{proof}

When $p\neq 3$, the class $\mathcal{S}$  contains maximal subgroups isomorphic either to $d\times \PSL_3(q).3$
or to $d\times \PSU_3(q^2).3$, where $d=\gcd(q-1,2)$.
The construction of these subgroups is made via the adjoint module $N$, described
in \cite[Section 5.4.1]{Ho} with the relevant properties. We may assume
$N=\{A\in \Mat_3(q)\mid \tr(A)=0\}$ for $G=\GL_3(q)$ and
$N=\{A\in \Mat_3(q^2) \mid A^T=A^\sigma,\; \tr(A)=0\}$ for
$G=\U_3(q^2)= \{g\in \GL_3(q^2) \mid g^T g^\sigma=\I_3\}$,
where $g^\sigma$ is obtained from $g$ raising its entries to $q$.
In both cases, $G$ normalizes $N$ and the conjugation action of $G$ on $N$ gives rise
to an absolutely irreducible representation
$\Phi: G'\to \Omega_8^+(q)$. We claim that $\Phi(g)$ admits the eigenvalue $1$ for each $g\in G'$,
equivalently that $C_{G'}(N)\neq \{0\}$. 
We will apply this argument to the elements $[x,y]$, $\tau([x,y])$ and $\tau^2([x,y])$.

\begin{remark}\label{poli}
The characteristic polynomial of $g_i=\tau^i([x,y])$ is:
$$\chi_{g_0}(t)=\left\{
\begin{array}{ll}
t^8 + a^2 t^7 + a^2 t^6 + a^2 (a+1)^2 t^5 + (a+ 1)^8 t^4 +\\
a^2 (a+1)^2 t^3 +   a^2 t^2 + a^2 t + 1 & \textrm{if } p=2; \\
t^8  -2 a t^7 + (3 a^2 - 4) t^6 -2 a (a^2-1) t^5  + (a^4 - 2a^2 + 6)t^4\\
-2 a  (a^2-1) t^3 + (3 a^2 - 4) t^2  -2 a t + 1 & \textrm{if } p>2;
  \end{array}\right.$$
$$\chi_{g_1}(t)=\left\{
\begin{array}{ll}
   t^8 + a^4 t^7 + a^2 t^6 +  a^2(a+1)^2 t^5 + (a+ 1)^4 t^4 + \\
   a^2 (a+1)^2  t^3 +  a^2 t^2 + a^4 t + 1 & \textrm{if } p=2; \\
   t^8 + 2 a t^7 + (3 a^2 - 4) t^6 + 2 a (a^2- 1) t^5 +
 (a^4 - 2 a^2 + 6) t^4  \\
 +2 a   (a^2-1) t^3 + (3 a^2 - 4) t^2 + 2 at + 1 & \textrm{if } p>2;
  \end{array}\right.$$
  $$\chi_{g_2}(t)=\left\{
\begin{array}{ll}
   t^8 + t^7 + a^2 t^6 + (a^3 + 1)^2 t^5 + a^4 (a+1)^4 t^4 +\\
   (a^3 + 1)^2 t^3 + a^2 t^2 + t + 1 & \textrm{if } p=2; \\
   t^8 + a^2 t^7 + (3 a^2 - 4) t^6 + 3 a^2 t^5 +  2(a^2 + 3)t^4 +  3 a^2 t^3 + \\
   (3 a^2 - 4) t^2 + a^2 t + 1 & \textrm{if } p>2.
  \end{array}\right.$$
In particular, for $p=2$ we have 
$$\chi_{g_0}(1)=(a+1)^8, \quad \chi_{g_1}(1)=(a+1)^4 \equad \chi_{g_2}(1)=a^4 (a+1)^4,$$
and for $p>2$ we have
$$\chi_{g_0}(1)=a^2(a-2)^2, \quad \chi_{g_1}(1)=a^2(a+2)^2 \equad \chi_{g_2}(1)=16 a^2.$$
\end{remark}

\begin{lemma}\label{LU3}
Suppose $a\neq \pm 2$ if $p$ is odd. Then  $H$ is neither contained in any maximal subgroup $d\times \PSL_3(q).3$ 
nor in any maximal subgroup $d\times \PSU_3(q^2).3$  of class $\mathcal{S}$.
\end{lemma}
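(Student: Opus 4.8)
The plan is to argue by contradiction, using the machinery set up in the paragraph preceding Remark \ref{poli}. Suppose $H$ were contained in a maximal subgroup $M$ of type $d\times \PSL_3(q).3$ or $d\times \PSU_3(q^2).3$ lying in class $\mathcal{S}$. Such a subgroup arises from the adjoint representation $\Phi:G'\to \Omega_8^+(q)$, where $G'$ is either $\SL_3(q)$ or $\SU_3(q^2)$ acting by conjugation on the trace-zero matrices $N$. The key structural fact recorded in the text is that every element in the image of $\Phi$ admits the eigenvalue $1$, because the conjugation action always fixes a nonzero trace-zero matrix (equivalently $C_{G'}(N)\neq\{0\}$). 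I would exploit exactly this: the element $[x,y]$ (and more generally $\tau^i([x,y])$) must, if $H\leq M$, lie in (a conjugate of) $\Phi(G')$ up to the triality twist, and hence must have $1$ as an eigenvalue.

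The main computational input is already assembled in Remark \ref{poli}, where the values $\chi_{g_i}(1)$ are tabulated for $g_i=\tau^i([x,y])$. The strategy is to show these are all nonzero under the hypotheses, so that none of $[x,y]$, $\tau([x,y])$, $\tau^2([x,y])$ can have $1$ as an eigenvalue, contradicting membership in any of the three triality-twisted images of $\Phi(G')$. Concretely, for $p=2$ we have $\chi_{g_0}(1)=(a+1)^8$, $\chi_{g_1}(1)=(a+1)^4$ and $\chi_{g_2}(1)=a^4(a+1)^4$; since $q\neq 2,3$ forces $a\neq 0$ and $\F_2[a]=\F_q\neq\F_2$ forces $a\neq 1$, all three are nonzero. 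For $p>2$ we have $\chi_{g_0}(1)=a^2(a-2)^2$, $\chi_{g_1}(1)=a^2(a+2)^2$ and $\chi_{g_2}(1)=16a^2$; the hypothesis $a\neq\pm 2$ together with $a\neq 0$ (which follows since $\F_p[a]=\F_q$ and, in the relevant degenerate case, $a=0$ would force $\F_q=\F_p$ with $q=p$ too small) makes all three nonzero.

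The step requiring the most care is justifying that membership $H\leq M$ genuinely forces one of the $g_i$ to have eigenvalue $1$, rather than merely making such a conclusion plausible. Here I would invoke the behaviour of triality on class $\mathcal{S}$: just as in Corollary \ref{Om7} and Lemma \ref{+C5S}, the subgroups of type $\PSL_3$ and $\PSU_3$ are permuted or stabilized by $\tau$ in a controlled way, so that $H\leq M$ implies $\tau^i(H)\leq \tau^i(M)$ with $\tau^i(M)$ again of the same adjoint type for a suitable $i\in\{0,1,2\}$. Since $\tau^i([x,y])=[\tau^i(x),\tau^i(y)]=g_i$ and every element of the adjoint image has eigenvalue $1$, we would obtain $\chi_{g_i}(1)=0$ for that $i$, contradicting the computation above. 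The genuinely delicate point is pinning down precisely which triality conjugates land in which adjoint type (and ruling out the mixed $\PSL_3$/$\PSU_3$ possibilities simultaneously), for which I would lean on the description in \cite[Section 5.4.1]{Ho} and the tables \cite[Tables 8.50--8.53]{Ho}; the remaining eigenvalue arithmetic is then routine given Remark \ref{poli}.
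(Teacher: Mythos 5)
Your overall skeleton does match the paper's: reduce to showing that if $H$ were contained in such a subgroup then one of $g_i=\tau^i([x,y])$, $i=0,1,2$, would have to admit the eigenvalue $1$, and contradict this with the values $\chi_{g_i}(1)$ of Remark \ref{poli}, which you correctly check are nonzero under the hypotheses. But there is a genuine gap at the heart of your argument: you treat the statement that every element of $\Phi(G')$ admits the eigenvalue $1$ as a ``key structural fact recorded in the text''. The paper does not record it as a fact; the paragraph preceding Remark \ref{poli} only says ``\emph{We claim that} $\Phi(g)$ admits the eigenvalue $1$ for each $g\in G'$'', and the proof of that claim is precisely the substance of the paper's proof of Lemma \ref{LU3}. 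By citing it you have assumed essentially the entire lemma, and what you flag as the delicate point (the triality bookkeeping of which twisted copy of $H$ lands in the adjoint-type subgroup) is in fact handled by the paper exactly as tersely as you handle it, namely by applying the eigenvalue criterion to all three elements $g_0,g_1,g_2$.

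Supplying the missing claim is not routine, and it splits into two genuinely different cases. For $g\in\SL_3(q)$ the paper argues via the Frobenius formula: the centralizer $C$ of $g$ in $\Mat_3(q)$ has dimension at least $3$, so by Grassmann $\dim(C\cap N)\geq 3+8-9=2>0$, giving a nonzero trace-zero matrix centralized by $g$, i.e.\ the eigenvalue $1$ for $\Phi(g)$. For $g\in\SU_3(q^2)$ this dimension count collapses: $C$ is an $\F_{q^2}$-subspace of $\Mat_3(q^2)$, while the relevant module is built from the Hermitian matrices $M=\{A\in\Mat_3(q^2)\mid A^T=A^\sigma\}$ (not from trace-zero matrices, as your proposal describes), which form only an $\F_q$-subspace; over $\F_q$ Grassmann gives $\dim(C\cap M)\geq 6+9-18=-3$, which is vacuous. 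The paper instead observes that $g$ centralizes $g+(g^\sigma)^T=g+g^{-1}\in M$, which settles the matter unless $g+g^{-1}=\rho\,\I_3$; in that exceptional case $g^2-\rho g+\I_3=0$, so $g$ has minimal polynomial of degree at most $2$, and a short argument with singular and non-singular vectors shows $g$ is $\U_3(q^2)$-conjugate to a block-diagonal matrix, which centralizes the non-scalar Hermitian matrix $\diag(1,0,0)$. Without this two-pronged argument your proposal does not prove the lemma.
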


\begin{proof}
Suppose first $g \in \SL_3(q)$. By Frobenius formula, the centralizer $C$ of $g$ in $\Mat_3(q)$
has dimension at least $3$. Hence, by Grassmann formula, the intersection of $C$ with $N$
has dimension at least $2$. If follows that $\Phi(g)$ has the eigenvalue $1$.
Considering $g \in \{[x,y], \tau([x,y]), \tau^2([x,y])\}$, by Remark \ref{poli} we get a contradiction. 

Suppose now $g \in \SU_3(q^2)$. The group $\SU_3(q^2)$ normalizes the subspace 
$$M=\left\{A\in \Mat_3(q^2)\mid A^T=A^{\sigma}\right\}$$ of dimension $9$ over $\F_q$.
The conjugation action gives rise to an absolutely irreducible $8$-dimensional representation $\Phi$ of $\SU_3(q^2)$ over 
$\frac{M}{\F_q^\ast \I_3}$.
We show that every $g\in \SU_3(q^2)$ centralizes some non-scalar matrix in $M$. Equivalently,  $\Phi(g)$ has the eigenvalue $1$.
Proceeding as in the previous case, this will give a contradiction.

We have $g^{-1}=(g^{\sigma})^{T}$, so  $g$ centralizes $g+(g^{\sigma})^{T}\in M$.
Hence, our claim is true unless $g+(g^{\sigma})^{T}=\rho \I_3$ for some $\rho\in \F_q$.
In this case, from $g(g^{\sigma})^{T}=\I_3$ it follows that $g^2-\rho g+\I_3=0$. Thus $g$ has minimal polynomial of degree at most $2$.
It follows that there exists a decomposition $(\F_{q^2})^3= \langle v_1\rangle \oplus \langle v_2, v_3\rangle$ fixed by $g$.

Suppose that  $v_1$ is singular. Since $v_1^\perp$ has dimension $2$, there exists 
$$u= \sum_i\lambda_i v_i\not\in \langle v_1\rangle$$
such that $v_1^Tu^{\sigma }=0$. It follows $v_1^T (\lambda_2v_2+\lambda_3v_3)^{\sigma}=0$.
In other words we may assume that $u=v_2$. Now, $v_2$ and $gv_2\in \langle v_2, v_3\rangle $ cannot be linearly independent otherwise,
setting $v_3=gv_2$, we would have $v_1^T v_1^{\sigma}=v_1^T v_2^{\sigma}= v_1^T v_3^{\sigma }$ and the form would be singular.
So, $v_2$ is non singular and $g\langle v_2\rangle = \langle v_2\rangle$. 
Substituting $v_1$ at the beginning with  $v_2$, we may assume $v_1$ non singular. It follows that
$\langle v_2, v_3\rangle =v_1^\perp$ and $g$ is conjugate, under $\U_3(q^2)$, to a matrix
of shape $\begin{pmatrix}
\alpha&0&0\\
0&\beta&\gamma\\
0&\delta&\epsilon
\end{pmatrix}$ centralized by $\begin{pmatrix}
1&0&0\\
0&0&0\\
0&0&0
\end{pmatrix}\in M$.
\end{proof}

\begin{lemma}\label{3D4}
The group $H$ is not contained in any maximal subgroup of type ${}^3D_4(\sqrt[3]{q})$ of class $\mathcal{S}$.
\end{lemma}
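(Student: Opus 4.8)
The goal is to show $H$ cannot embed in a maximal subgroup of type ${}^3D_4(\sqrt[3]{q})$, which only occurs when $q = q_0^3$ is a cube. The plan is to exploit the same trace/field-of-definition strategy used in Lemmas \ref{+C5} and \ref{+C5S}, but now accounting for the triality twist: a subgroup of type ${}^3D_4(q_0)$ is, up to conjugacy, defined over $\F_{q_0}$ with the triality automorphism intertwining the field Frobenius of order $3$. Because the triality automorphism $\tau$ permutes the relevant $\mathcal{C}_1$-classes and acts on $H$, the right invariants to compare are trace quantities that are simultaneously $\tau$-twisted and Frobenius-compatible.

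First I would recall that if $H \leq M$ with $M$ of type ${}^3D_4(\sqrt[3]{q})$, then there is $g \in \GL_8(\F)$ conjugating $x,y$ into scalar multiples of matrices $x_0, y_0$ whose entries, together with the action of $\tau$, descend to $\F_{q_0}$ with $q = q_0^3$. Concretely, the ${}^3D_4$ structure forces a relation of the form $\tau(h)^g = (h^g)^{(\sigma)}$ where $\sigma$ is the Frobenius $t \mapsto t^{q_0}$ and $h$ ranges over $\langle x, y\rangle$. The key consequence I would extract is that a suitable $\tau$-invariant trace of a word in $x,y$ must lie in the fixed field $\F_{q_0}$. Using the characteristic polynomials computed in Remark \ref{poli}, I would form the combination $\tr([x,y]) + \tr(\tau([x,y])) + \tr(\tau^2([x,y]))$, which is manifestly $\tau$-invariant, hence fixed by $\sigma$, hence lies in $\F_{q_0}$; but I would instead look for a single trace value whose Galois orbit under $\sigma$ already generates $\F_q$ over $\F_p$, forcing $q_0 = q$ and contradicting $q = q_0^3 < q$.

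The cleanest route is to observe that $\tr([x,y])$ equals $a^2$ if $p = 2$ and $2a$ if $p$ is odd (as computed in Lemma \ref{+C5}), so $a \in \F_{q_0}$ would follow directly from $\tr([x,y]) \in \F_{q_0}$ once I establish that this trace is preserved under conjugation by $g$ into the ${}^3D_4$ subgroup. Since $\F_p[a] = \F_q$ by construction, $a \in \F_{q_0}$ gives $\F_q \leq \F_{q_0}$, contradicting $q = q_0^3$. The subtlety is that under triality the image $x_0 = (x^g)$ need not have its trace fixed by $\sigma$ individually; rather $\tr(x_0), \tr(\tau(x)^g), \tr(\tau^2(x)^g)$ are permuted by $\sigma$. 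I would therefore argue that the \emph{product} or the elementary symmetric functions of the three triality-conjugate traces lie in $\F_{q_0}$, and exhibit from Remark \ref{poli} that at least one such symmetric function is a polynomial in $a$ over $\F_p$ that still generates $\F_q$ (for instance, if the three values $\tr(g_i) = \tr(\tau^i([x,y]))$ coincide for all three $i$, then each individually is $\sigma$-fixed).

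The main obstacle is pinning down exactly how triality interacts with the defining Frobenius of the ${}^3D_4$ subgroup, i.e.\ justifying the intertwining relation $\tau(h)^g = \sigma(h^g)$ rather than merely a field-of-definition statement; this is where I would lean on \cite[Tables 8.50--8.53]{Ho} and the structural description of ${}^3D_4(q_0)$ as the fixed points of $\sigma\tau$ on $\Omega_8^+(\F)$. Once that relation is in hand, the trace bookkeeping is routine: comparing the constant terms or subleading coefficients of the characteristic polynomials $\chi_{g_i}(t)$ in Remark \ref{poli} across $i = 0,1,2$ shows the symmetric combination is a nonconstant polynomial in $a$, so it generates $\F_q$ and forces $q_0 = q$, the desired contradiction. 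I would handle $p = 2$ and $p$ odd separately only at the final polynomial-comparison step, since the explicit coefficients differ between the two cases.
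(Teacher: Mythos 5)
Your overall strategy is the paper's: translate $H\leq d\times{}^3D_4(q_0)$ into an intertwining relation $P^{-1}\bigl(\tau^i(h^\sigma)\bigr)P=\pm h$ for all $h\in H$ (with $\sigma:\alpha\mapsto\alpha^{q_0}$, $i\in\{1,2\}$), then compare conjugation-invariant data of $h=[x,y]$ taken from Remark~\ref{poli} to push $a$ into $\F_{q_0}$. You correctly isolate that relation as the crux. Two points you gloss over are handled easily: the $\pm$ sign coming from the factor $d=\gcd(q-1,2)$ disappears because $h\mapsto\pm1$ is a homomorphism into an abelian group and $[x,y]$ is a commutator; and you discuss only the twist $\sigma\tau$, whereas the containment may equally be twisted by $\sigma\tau^2$, so both $i=1,2$ must be treated (with the symmetric-function device this costs little, but it should be said; the paper treats the two cases separately).

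The genuine gap is your concluding field-descent step. You claim the $\tau$-symmetric trace combination ``is a nonconstant polynomial in $a$, so it generates $\F_q$ and forces $q_0=q$.'' That inference is false, and it fails for the very quantities in play: by Remark~\ref{poli}, $\tr(g_0)+\tr(g_1)+\tr(g_2)$ equals $2a-2a-a^2=-a^2$ for $p$ odd and $(a^2+a+1)^2$ for $p=2$. For $p$ odd take $q=p^6=q_0^3$ with $q_0=p^2$ and $a=\sqrt{\beta}$, where $\beta$ is a nonsquare of $\F_{p^3}$ generating $\F_{p^3}$ over $\F_p$: then $\F_p[a]=\F_q$ as required, yet $\F_p[-a^2]=\F_{p^3}\neq\F_q$, so your symmetric function does not generate $\F_q$ and your argument produces no contradiction. (Your parenthetical fallback --- that the three traces might coincide, making each one $\sigma$-fixed --- does not apply either: they are visibly distinct polynomials in $a$; it is the $t^6$-coefficients that coincide, not the traces.) The conclusion you want is still true, but for a different reason, which must be supplied: since $[\F_q:\F_{q_0}]=3$, the condition $-a^2\in\F_{q_0}$ (resp.\ $a^2+a\in\F_{q_0}$ for $p=2$, which follows from $(a^2+a+1)^2\in\F_{q_0}$ because squaring is an automorphism) makes $a$ algebraic of degree at most $2$ over $\F_{q_0}$, and that degree must divide $3$, whence $a\in\F_{q_0}$, contradicting $\F_p[a]=\F_q$. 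Note how the paper sidesteps this entirely: for $p=2$ it compares the $t^6$-coefficient, which is the \emph{same} polynomial $a^2$ in all three $\chi_{g_i}$, so the relation reads $a^2=(a^2)^{q_0}$ at once; for $p$ odd it equates the $t^7$- (and, for $i=2$, also the $t^5$-) coefficients, getting $a^{q_0}=-a$ or $a=\pm2$, both immediate contradictions.
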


\begin{proof}
Let $q=q_0^3$ and assume, by sake of contradiction, that $H\leq d\times {}^3D_4(q_0)$, where $d=\gcd(q-1,2)$.
Then, there exists a non singular matrix $P$ and a scalar $\lambda_h$ such that, for some $i=1,2$:
$$P^{-1} (\tau^i( h^{\varphi}) ) P = \pm h \quad \textrm{ for all  } h \in H,$$
where $h^\varphi$ denotes the matrix obtained by substituting each entry $\alpha$ of $h$ with $\alpha^{q_0}$.
In particular, taking $h=[x,y]$, $\tau^i(h^\varphi)$ and $h$ have the same characteristic polynomial.

The characteristic polynomials of $[x,y]$, $\tau([x,y])$ and $\tau^2([x,y])$ are described in Remark \ref{poli}.
If $p=2$, equating the coefficients of the term $t^6$ in $\chi_{[x,y]}(t)$  and in  $\chi_{\tau^i([x,y]^\varphi)}(t)$,
we get the contradiction $a^2=(a^2)^{q_0}$.
Similarly, for $p$ odd, equating the coefficients of the term $t^7$ in $\chi_{[x,y]}(t)$  and in  $\chi_{\tau([x,y]^\varphi)}(t)$,
we get $a^{q_0}=-a$ which gives the contradiction $a^{q}=-a$.
Finally, equating the coefficients of the terms $t^7$ and $t^5$ in $\chi_{[x,y]}(t)$  and in  $\chi_{\tau^2([x,y]^\varphi)}(t)$,
we get $a^{2q_0}=-2a$ and $3 a^{2 q_0}=-2a(a^2-1)$ which gives the contradiction  $a=-2$.
\end{proof}

\begin{theorem}\label{main+}
Suppose $q\geq 4$. Let $a \in \F_q^*$ be such that $\F_q=\F_p[a]$. If $p$ is odd, assume also that $a\neq \pm 2$.
Then $H=\Omega_8^+(q)$. In particular, $\Omega_8^+(q)$ is $(2,3)$-generated for
all $q\geq 4$.
\end{theorem}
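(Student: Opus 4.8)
The plan is to show that the absolutely irreducible group $H=\langle x,y\rangle\le\Omega_8^+(q)$ --- the containment coming from the Steinberg factorizations of Section \ref{gen+} and the orders $|x|=2$, $|y|=3$ being given there --- is contained in no proper maximal subgroup, and therefore equals $\Omega_8^+(q)$. Concretely I would argue by contradiction: if $H\neq\Omega_8^+(q)$ then $H\le M$ for some maximal $M$, and by \cite[Table 8.50]{Ho} together with Kleidman's analysis \cite{K}, $M$ lies in one of the Aschbacher classes $\mathcal{C}_1,\dots,\mathcal{C}_6,\mathcal{S}$. The substance is already in the lemmas of Sections \ref{gen+} and \ref{gen8+}; the theorem is the bookkeeping step verifying that these exclude \emph{every} class on the list. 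Throughout, the standing hypotheses $q\ge 4$ and (for odd $p$) $a\neq\pm 2$ are exactly what make those lemmas applicable.

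Then I would run through the classes in turn. The reducible class $\mathcal{C}_1$ is excluded because $H$ is absolutely irreducible (Lemmas \ref{irr+2} and \ref{irr+odd}); the same absolute irreducibility excludes the field-extension class $\mathcal{C}_3$, since a member of $\mathcal{C}_3$ centralizes a non-scalar $\F_{q^r}$-structure, producing a nontrivial element of $\mathrm{End}_{\F_q H}(\F_q^8)$ against Schur's lemma (the semilinear outer coset, if involved, is disposed of by a twisted comparison of characteristic polynomials exactly as in Lemmas \ref{+C5S} and \ref{3D4}). Lemma \ref{+C2} rules out the imprimitive class $\mathcal{C}_2$, the class $\mathcal{C}_6$, and the sporadic almost-simple subgroups $2^\cdot\Omega_8^+(2)$, $2^\cdot Sz(8)$, $2\times\Alt(10)$, $2^\cdot\Alt(10)$; Lemma \ref{+C5} rules out the subfield class $\mathcal{C}_5$. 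The tensor class $\mathcal{C}_4$ is handled by Corollary \ref{Om7} through triality: applying $\tau$ moves such an $M$ into $\mathcal{C}_1$, so $\tau^i(H)$ would be reducible, contradicting Lemmas \ref{irr+tau} and \ref{irr+tau2} (for odd $p$; in even characteristic the analogous role is absorbed by the $\Sp_6(q)$ case of Corollary \ref{Om7}). Finally the remaining members of class $\mathcal{S}$ are eliminated family by family: $2^\cdot\Omega_7(q)$ by Corollary \ref{Om7}, $\Omega_8^-(\sqrt q)$ by Lemma \ref{+C5S}, $d\times\PSL_3(q).3$ and $d\times\PSU_3(q^2).3$ by Lemma \ref{LU3} (via the adjoint-module eigenvalue computation of Remark \ref{poli}), and ${}^3D_4(\sqrt[3]{q})$ by Lemma \ref{3D4}. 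Since every class is excluded, $H=\Omega_8^+(q)$.

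To deduce the ``in particular'' clause --- that $\Omega_8^+(q)$ is $(2,3)$-generated for all $q\ge 4$ --- I would check that a parameter $a$ meeting the hypotheses exists for each such $q$. Since $\F_p[a]=\F_q$ holds for any primitive element, and for odd $p$ one must additionally avoid $a=\pm 2$, the only potential difficulty is the prime field $q=p$ with few admissible residues; a short count shows an admissible $a$ always survives once $q\ge 4$, the forbidden set $\{0,2,-2\}$ leaving room precisely because the genuinely tight case $q=3$ is already excluded, while for $q>p$ any generator of $\F_q$ over $\F_p$ lies outside $\F_p$ and so automatically differs from $\pm 2$. Thus $\Omega_8^+(q)=\langle x,y\rangle$ with $|x|=2$ and $|y|=3$ for every $q\ge 4$.

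With the conceptual content front-loaded into the lemmas, the real obstacle in assembling the theorem is \emph{exhaustiveness}: one must be certain the case split matches the maximal-subgroup list of \cite[Table 8.50]{Ho} precisely, the two recurring pitfalls being (i) the characteristic split, where $\Sp_6(q)\cong\Omega_7(q)$ and the shapes of the $\mathcal{C}_4$ and $\mathcal{S}$ members differ between $p=2$ and $p$ odd, and (ii) the triality bookkeeping, where for each geometric $M$ outside $\mathcal{C}_1$ one must confirm that some $\tau^i(M)$ genuinely lands in $\mathcal{C}_1$ before invoking the irreducibility of $\tau^i(H)$. Disposing of $\mathcal{C}_3$ cleanly --- checking that absolute irreducibility, with at most the short twisted-trace supplement for the semilinear coset, really does suffice --- is the single place where I would be most careful to avoid an overhasty appeal to Schur's lemma.
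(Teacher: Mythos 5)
Your proposal reproduces the paper's proof in both skeleton and attribution: absolute irreducibility (Lemmas \ref{irr+2}, \ref{irr+odd}) disposes of $\mathcal{C}_1$; Lemma \ref{+C2} disposes of $\mathcal{C}_2$, $\mathcal{C}_6$ and the sporadic $\mathcal{S}$-members $2^\cdot\Omega_8^+(2)$, $2^\cdot Sz(8)$, $2\times\Alt(10)$, $2^\cdot\Alt(10)$; Lemma \ref{+C5} disposes of $\mathcal{C}_5$; Corollary \ref{Om7} disposes of $\mathcal{C}_4$ (odd $p$; the class is empty for $p=2$) and of $\Sp_6(q)$, $2^\cdot\Omega_7(q)$; and Lemmas \ref{+C5S}, \ref{LU3}, \ref{3D4} dispose of the remaining $\mathcal{S}$-families. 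Your closing count showing that an admissible $a$ exists for every $q\geq 4$ is correct (the paper leaves this step implicit).

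The one genuine defect is your handling of $\mathcal{C}_3$. Schur's lemma does not exclude containment in a full $\mathcal{C}_3$ subgroup: only the $\F_{q^2}$-\emph{linear} part of, say, $\mathrm{GU}_4(q).2$ centralizes the $\F_{q^2}$-structure, while the outer coset acts on it by the Galois automorphism, so an absolutely irreducible group can sit inside such a subgroup whenever it meets the semilinear coset. You flag this yourself, but your patch --- ``twisted comparison of characteristic polynomials exactly as in Lemmas \ref{+C5S} and \ref{3D4}'' --- invokes the wrong mechanism: those lemmas exploit that the relevant subgroups are defined over a \emph{proper subfield} $\F_{q_0}$ (so traces of suitable words land in $\F_{q_0}$), whereas a $\mathcal{C}_3$ subgroup satisfies no such trace constraint. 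The argument that actually closes this case is already in the paper: since $y$ has order $3$ and the linear part has index $2$, $y$ lies in the linear part; if $x$ does too, then $\mathrm{End}_{\F_q H}(\F_q^8)\supseteq\F_{q^2}$ contradicts absolute irreducibility, and if $x$ does not, then over $\F$ the pair preserves a decomposition into two $4$-spaces with $y$ fixing the summands and $x$ swapping them, forcing $\tr(xy)=\tr((xy)^3)=0$ (resp. $\tr(xy)=\tr(y(xy)^3)=0$ for odd $p$) --- exactly the computation of Case 3 of Lemma \ref{+C2}, which yields a contradiction. Alternatively, by \cite{K} the unitary $\mathcal{C}_3$ members of \cite[Table 8.50]{Ho} are triality images of $\mathcal{C}_1$ stabilizers of nonsingular $2$-spaces, so the absolute irreducibility of $\tau^i(H)$ (Lemmas \ref{irr+2}, \ref{irr+tau}, \ref{irr+tau2}) also excludes them. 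Note, in fairness, that the paper's own proof of Theorem \ref{main+} is likewise silent on $\mathcal{C}_3$ (it passes from ``primitive, tensor-indecomposable, not in $\mathcal{C}_5,\mathcal{C}_6$'' straight to the $\mathcal{S}$-list), so your instinct that this class needs separate care is sound; but the disposal you sketch must be replaced by one of the two arguments above.
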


\begin{proof}
By the considerations of Section \ref{gen+}, Lemma \ref{irr+2} and  Lemma \ref{irr+odd},  $H$ is an absolutely irreducible
subgroup of $\Omega_8^+(q)$. By Lemmas \ref{+C2} and \ref{+C5} and by Corollary \ref{Om7}, it is primitive and tensor-indecomposable,
and is not contained in any maximal subgroup of classes $\mathcal{C}_5, \mathcal{C}_6$.
So, either $H=\Omega_8^+(q)$ or  $H$ is contained in one of the following  maximal subgroups of class $\mathcal{S}$
(here $d=\gcd(q-1,2)$):
\begin{itemize}
\item[(i)] $\Sp_6(q)$ if $p=2$, or $2^\cdot \Omega_7(q)$ if $p$ is odd; 
\item[(ii)] $d^\cdot\Omega_8^-(q_0)$, where $q=q_0^2$;
\item[(iii)] $d\times \PSL_3(q).3$ if $q\equiv 1 \pmod 3$, or  $d \times\PSU_3(q^2).3$ if $q\equiv 2 \pmod 3$;
\item[(iv)] $d\times {}^3D_4(q_0)$, where $q=q_0^3$;
\item[(v)] $2^\cdot \Omega_8^+(2)$ if $q=p$ is odd;
\item[(vi)] $2^\cdot Sz(8)$, $2\times \Alt(10)$ or $2^\cdot \Alt(10)$ if $q=5$.
\end{itemize}
Now, case (i) is excluded by Corollary \ref{Om7};
cases (ii), (iii) and (iv) are excluded, respectively, by Lemmas \ref{+C5S}, \ref{LU3} and \ref{3D4};
cases (v) and (vi) are excluded by Lemma \ref{+C2}.
\end{proof}

\section{Generators for $\Omega_8^-(q)$}

\subsection{Fields of characteristic $2$}\label{sec2}

We first deal with the case $q=2$.

\begin{lemma}\label{q2}
The group $\Omega_8^-(2)$ is $(2,3)$-generated. 
\end{lemma}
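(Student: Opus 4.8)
The plan is to treat $\Omega_8^-(2)$ as an isolated base case, since for $q=2$ the parametric construction of Section~\ref{gen+} has no free parameter (one always has $\F_2[a]=\F_2$), so this small group must be handled by an ad hoc choice of generators. First I would exhibit two explicit $8\times 8$ matrices $x,y$ over $\F_2$ preserving a non-degenerate quadratic form of minus type, with $x^2=\I_8$ and $y^3=\I_8$. Because $-\I_8=\I_8$ over $\F_2$, the group $\Omega_8^-(2)$ coincides with $\P\Omega_8^-(2)$ and is the simple subgroup of index $2$ in $\mathrm{O}_8^-(2)$ cut out by the Dickson invariant; so after checking $x,y\in\mathrm{O}_8^-(2)$ I would verify that both have trivial Dickson invariant, placing them in $\Omega_8^-(2)$. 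Set $H=\langle x,y\rangle$.

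To prove $H=\Omega_8^-(2)$ I would show that $H$ lies in no maximal subgroup. Note that $|\Omega_8^-(2)|=2^{12}\cdot 3^4\cdot 5\cdot 7\cdot 17$, so $\Upsilon(\Omega_8^-(2))=\{2,3,5,7,17\}$; the finitely many maximal subgroups are listed in \cite[Tables 8.52 and 8.53]{Ho} and in \cite{Atlas}. Absolute irreducibility of $H$, read directly off the generating matrices, already excludes the reducible class $\mathcal{C}_1$. For the remaining classes the cleanest tool is element order. The prime $17=2^4+1$ divides $|\Omega_8^-(2)|$ only to the first power and occurs in very few maximal subgroups, so exhibiting an element of order $17$ in $H$, as a short word in $x$ and $y$, already restricts the possible overgroups drastically; producing in addition an element whose order (for instance $9$ or $30$) no $17$-containing maximal subgroup admits then excludes containment entirely. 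Concretely I would compute the orders of a handful of words such as $xy$, $xy^2$ and $(xy)^2xy^2$ and verify that the required coprime or large orders occur.

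The main difficulty is purely one of selecting $x,y$ so that these separating element orders actually appear, and then checking non-containment against each maximal class; since the group has order below $2\times 10^8$, this is a finite verification. I would discharge the membership of the generators in $\Omega_8^-(2)$, their orders $2$ and $3$, the absolute irreducibility of $H$, and the non-containment in every maximal subgroup by a direct computation in a computer algebra system such as \textsf{GAP} or \textsf{Magma}, with the element-order analysis above recorded as the human-checkable reason why $H$ escapes all maximals and hence equals $\Omega_8^-(2)$.
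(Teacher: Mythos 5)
Your proposal is correct and follows essentially the same route as the paper: ad hoc explicit generators over $\F_2$, membership in $\Omega_8^-(2)$ checked via the quasideterminant/Dickson invariant and the minus type of the preserved form, and then exclusion of every maximal subgroup by element orders — the paper does exactly this by checking that the primes dividing the orders of $[x,y]^j xyx$, $j=3,4,5$, exhaust $\Upsilon(\Omega_8^-(2))=\{2,3,5,7,17\}$, so that by the Atlas no proper subgroup can contain $H$. The only cosmetic difference is that the paper's prime-covering criterion replaces your separate irreducibility and order-17 arguments with a single uniform check.
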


\begin{proof}
Take $x,y$ as follows:
$$x=\begin{pmatrix}
0 & 1 &  0 & 0 &  1 & 0 & 0 & 0 \\
1 & 1 &  0 & 0 & 0 & 1 &  0 & 0 \\
0 & 0 &  0 & 1 & 0 & 0 & 0 & 0 \\
0 & 0 &  1 & 0 &  0 & 0 &  0 & 0 \\
0 & 1 &  0 & 0 &  0 & 1 &  0 & 0 \\
1 & 1 &  0 & 0 &  1 & 1 & 0 & 0 \\
0 & 0 &  0 & 0 &  0 & 0 & 0 & 1 \\
0 & 0 &  0 & 0 &  0 & 0 & 1 & 0
  \end{pmatrix},\quad y=\begin{pmatrix}
 0 & 0 & 0 & 0 & 1 & 0 & 0 & 0\\
 0 & 0 & 0 & 1 & 0 & 0 & 0 & 0\\
 0 & 1 & 0 & 0 & 0 & 0 & 0 & 0\\
 0 & 0 & 1 & 0 & 0 & 0 & 0 & 0\\
 1 & 0 & 0 & 0 & 1 & 0 & 0 & 0\\
 0 & 0 & 0 & 0 & 0 & 0 & 0 & 1\\
 0 & 0 & 0 & 0 & 0 & 1 & 0 & 0\\
 0 & 0 & 0 & 0 & 0 & 0 & 1 & 0
\end{pmatrix}.$$
Then, $x$ is an involution, $y$ has order $3$ and $\det(x)=\det(y)=1$.
Both the elements $x$ and $y$ fix the quadratic form
$$Q\left(\sum_{i=1}^4 (\alpha_i e_i+\alpha_{-i} e_{-i}) \right)=\alpha_1^2+\alpha_{-1}^2+\sum_{i=1}^4 \alpha_i\alpha_{-i}.$$
Since $\rk(\I_{8}-x) = 4$, by \cite[Proposition 1.6.11]{Ho}(i), the quasideterminant of $x$ is $+1$,
and hence $x\in \Omega_8^{\epsilon}(q)$.
The space $V=\F_2^8$ contains a totally singular subspace $W=\langle e_2,e_3,e_4  \rangle$ of dimension $3$.
Suppose there exists $w=\mu_1 e_1 + \sum\limits_{i=1}^4 \mu_{-i} e_{-i}$ such that
$\langle W, w\rangle$ is  totally singular of dimension $4$.
Imposing that $w$ is orthogonal to $W$, we obtain $\mu_{-2}=\mu_{-3}=\mu_{-4}=0$.
From $Q(w)=0$ we get $\mu_1^2 + \mu_1\mu_{-1} + \mu_{-1}^2=0$.
In particular, $\mu_{-1}\neq 0$ and hence we may assume $\mu_{-1}=1$.
Then, we get the condition $\mu_1^2+\mu_1+1=0$. Since the polynomial $t^2+t+1$ is irreducible over $\F_2$, 
$W$ is not contained in a totally singular subspace of dimension $4$. By Witt's lemma, the Witt index of $V$ is $3$ and
so $Q$ is a quadratic form of sign $-$. This means that $H=\langle x,y\rangle\leq \Omega_8^-(2)$.
Now, 
$$\bigcup_{j=3}^5 \Upsilon(  [x,y]^j xyx) =\Upsilon(\Omega_{8}^-(2)).$$
By \cite{Atlas} we conclude that  $H=\Omega_{8}^-(2)$.
\end{proof}

Assume $q\geq 4$ even, and let $a \in \F_q^*$ be such that $\F_2[a]=\F_q$.
This means, in particular, that $a\neq 0,1$. Define the following elements $x=x(a)$ and $y$:
$$x=\begin{pmatrix}
0 & a^{-1} &  0 & 0 & 0 & 0 &  0 & 0 \\
a &  0 &  0 & 0 &  0 & 0 &  0 & 0 \\
0 & 0 & \frac{a^2}{(a+1)^2} &  \frac{1}{(a+1)^2} &  0 & 0 &  0 & 0 \\
0 & 0 &  \frac{1}{(a+1)^2} &  \frac{a^2}{(a+1)^2} &  0 & 0 &  0 & 0 \\
(a+1)^2 &  a^{-1} &  0 & 0 &  0 & a & 0 & 0 \\
a^{-1} & \frac{(a+1)^2}{a^2} & 0 & 0 & a^{-1} & 0 &  0 & 0 \\
0 & 0 &  0  & 0 &  0 & 0& \frac{a^2}{(a+1)^2} & \frac{1}{(a+1)^2} \\
0 & 0 &  0 & 0 &  0 & 0 &  \frac{1}{(a+1)^2} & \frac{a^2}{(a+1)^2}
  \end{pmatrix},$$
  $$y=\begin{pmatrix}
 0 & 0 & 0 & 0 & 1 & 0 & 0 & 0\\
 0 & 0 & 0 & 1 & 0 & 0 & 0 & 0\\
 0 & 1 & 0 & 0 & 0 & 0 & 0 & 0\\
 0 & 0 & 1 & 0 & 0 & 0 & 0 & 0\\
 1 & 0 & 0 & 0 & 1 & 0 & 0 & 0\\
 0 & 0 & 0 & 0 & 0 & 0 & 0 & 1\\
 0 & 0 & 0 & 0 & 0 & 1 & 0 & 0\\
 0 & 0 & 0 & 0 & 0 & 0 & 1 & 0
\end{pmatrix}.$$
Then, $x$ is an involution, $y$ has order $3$ and $\det(x)=\det(y)=1$.
Both the elements $x$ and $y$ fix the quadratic form
$$Q\left(\sum_{i=1}^4 (\alpha_i e_i+\alpha_{-i} e_{-i})\right)=\alpha_1^2+\alpha_{-1}^2+
a^2 \sum_{i=2}^4 (\alpha_i^2+\alpha_{-i}^2)+\sum_{i=1}^4 \alpha_i\alpha_{-i},$$
whose associated symmetric form has Gram matrix $J=\begin{pmatrix}
  0 & \I_4 \\ \I_4 & 0   
    \end{pmatrix}$.
    
Clearly, $y \in \Omega_8^{\epsilon}(q)$, having order $3$. 
Since $\rk(\I_{8}-x) = 4$, by \cite[Proposition 1.6.11]{Ho}(i), the quasideterminant of $x$ is $+1$,
hence $x\in \Omega_8^{\epsilon}(q)$.
The space $V=\F_q^8$ contains a totally singular subspace $W=\langle ae_1+e_2, ae_1+e_3,ae_1+e_4  \rangle$ of dimension $3$.
Suppose that there exists a vector $w=\mu_1 e_1 + \sum\limits_{i=1}^4 \mu_{-i} e_{-i}$ such that 
$\langle W, w\rangle$ is totally singular  of dimension $4$.
Imposing that $w$ is orthogonal to $W$, we obtain $\mu_{-2}=\mu_{-3}=\mu_{-4}=a\mu_{-1}$. 
Now, $Q(w)=0$ gives $\mu_{-1}\neq 0$.  Hence, we may assume $\mu_{-1}=1$, obtaining the necessary condition $\mu_1^2+\mu_1+(a+1)^4=0$.
So, if the polynomial $t^2+t+(a+1)^4$ is irreducible in $\F_q[t]$, then
$W$ is not contained in a totally singular subspace of dimension $4$. By Witt's lemma \cite[Corollary 2.1.7]{KL},
the Witt index of $V$ is $3$ and so $Q$ is a quadratic form of sign $-$.
Thus,
$$H=\langle x, y\rangle\leq \Omega_8^-(q) \quad  \textrm{if } t^2+t+(a+1)^4 \textrm{ is irreducible over } \F_q.$$

\begin{lemma}\label{pol_irr}
For all $q=2^f$, there exists $a\in \F_q$ satisfying $\F_q=\F_2[a]$
and such that the polynomial $p(t)=t^2+t+(a+1)^4$ is irreducible over $\F_q$. 
\end{lemma}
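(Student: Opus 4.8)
The plan is to reduce the irreducibility condition to a single linear condition on $a$ via the Artin--Schreier criterion in characteristic $2$, and then to produce a suitable field generator by a counting argument. Recall that for $q=2^f$ and $c\in\F_q$ the quadratic $t^2+t+c$ is irreducible over $\F_q$ if and only if $\mathrm{Tr}_{\F_q/\F_2}(c)=1$: indeed such a quadratic either splits or is irreducible, and it splits precisely when $c$ lies in the image of the additive map $z\mapsto z^2+z$, whose image is exactly the kernel of the absolute trace $\mathrm{Tr}_{\F_q/\F_2}\colon\F_q\to\F_2$.

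First I would simplify the trace of $c=(a+1)^4$. Since the Frobenius $z\mapsto z^2$ fixes the absolute trace, $\mathrm{Tr}_{\F_q/\F_2}(z^2)=\mathrm{Tr}_{\F_q/\F_2}(z)$, we obtain $\mathrm{Tr}_{\F_q/\F_2}((a+1)^4)=\mathrm{Tr}_{\F_q/\F_2}(a+1)=\mathrm{Tr}_{\F_q/\F_2}(a)+\mathrm{Tr}_{\F_q/\F_2}(1)$. Writing $\epsilon=1+\mathrm{Tr}_{\F_q/\F_2}(1)\in\F_2$ (so that $\epsilon=0$ when $f$ is odd and $\epsilon=1$ when $f$ is even), the polynomial $p(t)$ is irreducible over $\F_q$ exactly when $\mathrm{Tr}_{\F_q/\F_2}(a)=\epsilon$. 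Thus it suffices to exhibit an element $a\in\F_q$ that both generates $\F_q$ over $\F_2$ and lies in the fibre $T_\epsilon=\{a\in\F_q:\mathrm{Tr}_{\F_q/\F_2}(a)=\epsilon\}$.

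Because the trace is a surjective $\F_2$-linear functional, $|T_\epsilon|=q/2=2^{f-1}$. On the other hand, every non-generator of $\F_q$ lies in some proper subfield $\F_{2^d}$ with $d\mid f$ and $d<f$, hence in some $\F_{2^d}$ with $d\le f/2$; consequently the number of non-generators is at most $\sum_{d=1}^{\lfloor f/2\rfloor}2^d=2^{\lfloor f/2\rfloor+1}-2$. Comparing this with $|T_\epsilon|=2^{f-1}$, one checks $f-1>\lfloor f/2\rfloor+1$ for every $f\ge 5$; so even if every non-generator happened to lie inside $T_\epsilon$, the fibre $T_\epsilon$ would still contain a generator, which furnishes the desired $a$.

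It then remains to treat the small cases $f\in\{1,2,3,4\}$, that is $q\in\{2,4,8,16\}$, directly. For $q=8$ and $q=16$ the same count already suffices, since $|T_\epsilon|=4>2$ and $|T_\epsilon|=8>4$ respectively. For $q=2$ one simply takes $a=0$, giving the irreducible $p(t)=t^2+t+1$. The only genuine obstacle is the borderline case $q=4$, where $|T_\epsilon|=2$ does not strictly exceed the two elements of the subfield $\F_2$; here a direct inspection settles it, as the two trace-$\epsilon$ elements are precisely the primitive cube roots of unity, each of which generates $\F_4$, so any such $a$ works. This completes all cases.
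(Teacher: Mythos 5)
Your proof is correct, and it rests on the same counting skeleton as the paper's --- exactly half the elements of $\F_q$ make the quadratic irreducible, and these outnumber the non-generators --- but the execution differs in ways worth noting. The paper invokes Carlitz's formula to get $2^{f-1}$ elements $\alpha$ with $t^2+t+\alpha$ irreducible, picks such an $\alpha$ that also generates $\F_q$, and then pulls back through the bijection $a \mapsto (a+1)^4$, using $\F_2[a]=\F_2[(a+1)^4]$. You instead make the count self-contained via the Artin--Schreier criterion ($t^2+t+c$ is irreducible over $\F_q$ iff $\mathrm{Tr}_{\F_q/\F_2}(c)=1$) and, using Frobenius-invariance of the trace, convert irreducibility of $t^2+t+(a+1)^4$ into the linear condition $\mathrm{Tr}_{\F_q/\F_2}(a)=\epsilon$ on $a$ itself; this removes both the citation and the pull-back step. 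A further dividend of your version is the explicit treatment of the borderline case $q=4$, which is genuinely necessary: there the trace fibre has only $2$ elements while the proper subfield $\F_2$ also has $2$, so the crude comparison is not decisive, and you settle it by checking that the two trace-$\epsilon$ elements are the primitive cube roots of unity. Notably, the paper's own uniform count has the same defect at $f=2$ (it compares $2$ non-generators against $2^{f-1}=2$ good elements, where the needed strict inequality fails, and the conclusion there really depends on the fact that $0$ and $1$ are not good), so your explicit check closes a small gap that the paper glosses over.
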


\begin{proof}
By Carlitz's formula \cite{Car}, there exist $2^{f-1}$ elements $\alpha$ in $\F_q$ such that
$t^2+t+\alpha$ is irreducible.
We claim that $\F_2[\alpha]=\F_q$ for at least one $\alpha$.
Indeed, the number of elements lying in proper subfields of $\F_q$ is $0$ if $f=1$, it is
$2$ if $f$ is a prime, and it is at most
 $2+2^2+ \dots +2^{f-2}=2^{f-1}-2 < 2^{f-1}$ otherwise. Now, write $\alpha=(a+1)^4$:
clearly, $\F_2[a]=\F_2[a+1]=\F_2[\alpha]=\F_q$.
\end{proof}

The characteristic polynomial of $xy$ is
\begin{equation}\label{char1}
\chi_{xy}(t)=(t + (a^2 + 1)^{-1}) (t + a^2 + 1) (t^2 + (a^2 + 1)^{-1} t + 1)(t^2 + t + 1)^2.
\end{equation}
Note that $\tr(xy)=(a+1)^2$ and that $\chi_{xy}(1)=\frac{a^4}{(a+1)^4}\neq 0$.

\begin{lemma}\label{irr_beta}
The group $H$ is absolutely irreducible. 
\end{lemma}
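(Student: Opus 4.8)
The plan is to prove the absolute irreducibility of $H=\langle x,y\rangle\leq \Omega_8^-(q)$ (for $q=2^f\geq 4$) using the same eigenvector-and-orbit strategy already employed in Lemma~\ref{irr+2}. First I would pass to the algebraic closure $\F$ and study the action of $y$. Since $y$ has order $3$ and $\omega$ is a primitive cube root of unity, I expect the $\omega$-eigenspace $V_\omega(y)$ to be two-dimensional (this is forced by the shape of $y$, whose rational canonical form over $\F$ splits into $3$-cycles and a fixed block). I would compute explicit generators $v_1,v_2$ of $V_\omega(y)$ from the matrix $y$.

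The core of the argument proceeds by contradiction. Let $\{0\}\neq U$ be an $H$-invariant subspace and let $\overline U$ be an $H^T$-invariant complement. Using the bilinear form with Gram matrix $J=\begin{pmatrix} 0 & \I_4 \\ \I_4 & 0\end{pmatrix}$ (so that $H$ preserves $J$ and hence $J\overline U$ is $H$-invariant), I may assume $\dim U\leq 4$. I would first dispose of the case $U\cap V_\omega(y)=\{0\}$: then $V_\omega(y^T)=V_\omega(y^{-1})=V_{\omega^{-1}}(y)\leq\overline U$, and I would exhibit a $8\times 8$ matrix $M$ whose columns are the images of the vectors $v_1^-,v_2^-$ (the $\omega^{-1}$-eigenvectors) under a spanning set of words in $x^T,y^T$, compute $\det M$ as an explicit polynomial in $a$, and check it is nonzero, possibly replacing a few columns in the sporadic subcases where the leading determinant vanishes. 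This forces some nonzero $v=b_1v_1+b_2v_2\in U$. I would then show that the $H$-orbit of $v$ spans a space of dimension $\geq 5$, again by writing a matrix $N$ of iterated images $v,xv,yxv,\dots$ and computing rank-witnessing minors; the delicate subcases are where $b_1/b_2$ takes special values making the generic minor vanish, handled by coprimality of two auxiliary determinant polynomials $d_1,d_2$ in $a$.

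The main obstacle, as in Lemma~\ref{irr+2}, will be the bookkeeping of these special parameter values: for certain ratios $b_1:b_2$ (the roots of a quadratic in the eigencoordinates) the natural $5\times 5$ minor degenerates, and one must produce an alternative pair of minors whose greatest common divisor in $\F[a]$ is a nonzero constant, yielding $\rk\geq 5 > 4 \geq \dim U$ and the desired contradiction. Because $\chi_{xy}(t)$ has already been factored in \eqref{char1} with $\chi_{xy}(1)\neq 0$, I can use the structure of $xy$ to guide the choice of words and to guarantee that the vectors $v_1,v_2$ and their orbit images are in sufficiently general position; this is what makes the determinant computations come out nonzero generically.

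I expect the proof to be shorter than that of Lemma~\ref{irr+2} because here we are over a specific field with $q$ even and $a$ chosen so that $t^2+t+(a+1)^4$ is irreducible (via Lemma~\ref{pol_irr}), which restricts the relevant special values. The hard part is purely computational: verifying coprimality of the candidate minor-polynomials $d_1,d_2$ over $\F_2[a]$ in each degenerate subcase. Once absolute irreducibility is established, it feeds directly into the subsequent maximal-subgroup analysis, exactly paralleling the positive-type case, so the statement's role is to rule out membership of $H$ in any reducible (class $\mathcal C_1$) maximal subgroup.
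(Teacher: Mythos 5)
Your overall strategy (eigenvectors plus orbit/minor computations) is the right family of techniques, but your plan rests on a false structural claim that breaks its entire architecture. For the matrix $y$ of this section the $\omega$-eigenspace is \emph{not} two-dimensional: $y$ has the two $3$-cycles $(e_2,e_3,e_4)$ and $(e_{-2},e_{-3},e_{-4})$, but on the remaining plane $\langle e_1,e_{-1}\rangle$ it acts by $\left(\begin{smallmatrix}0&1\\1&1\end{smallmatrix}\right)$, which in characteristic $2$ has characteristic polynomial $t^2+t+1$, hence order $3$ and no fixed vectors. So $\dim V_1(y)=2$ while $\dim V_\omega(y)=\dim V_{\omega^{-1}}(y)=3$; it is the $y$ of Section \ref{gen+} (two fixed basis vectors and two $3$-cycles) that has a two-dimensional $\omega$-eigenspace, and you have carried that structure over to a different matrix. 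Consequently the case analysis you outline --- a vector $v=b_1v_1+b_2v_2$, special ratios $b_1:b_2$ arising as roots of a quadratic, coprimality of two minors in $\F[a]$ --- does not apply here: the nonzero vectors of $U\cap V_\omega(y)$ form projectively a \emph{two}-parameter family, so the degenerate-case bookkeeping would have to be carried out over a projective plane of parameters, which is substantially harder and is nowhere addressed in your plan. A side remark: your hope that irreducibility of $t^2+t+(a+1)^4$ trims the special values is misplaced; that hypothesis only fixes the sign of the quadratic form (i.e.\ that $H\leq\Omega_8^-(q)$ rather than $\Omega_8^+(q)$) and plays no role in irreducibility.

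The paper sidesteps all of this by working with $xy$ instead of $y$, exploiting the factorization \eqref{char1}: the eigenvalue $(a+1)^2$ of $xy$ has a \emph{one}-dimensional eigenspace, spanned by an explicit vector $s$ (and likewise $\overline{s}$ for $(xy)^T$); this holds even in the exceptional case $a^2+a+1=0$, where $(a+1)^2$ is no longer a simple root. This yields a clean dichotomy with no free parameters: if $(xy)_{|U}$ has the eigenvalue $(a+1)^2$, then $s\in U$ and a single $8\times 8$ determinant, equal to $(a+1)^{18}/a^8\neq 0$, shows that the $H$-orbit of $s$ spans $\F^8$, so $U=\F^8$; otherwise $\overline{s}\in\overline{U}$ and a second determinant, $a^8(a+1)^{22}\neq 0$, gives $\overline{U}=\F^8$, i.e.\ $U=\{0\}$. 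In particular no reduction to $\dim U\leq 4$ and no coprimality arguments are needed. To repair your proposal you must either redo it honestly with the three-dimensional eigenspace (accepting a two-parameter degeneration analysis), or switch, as the paper does, to an element of $H$ possessing a one-dimensional eigenspace.
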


\begin{proof}
Let $U$ be an $H$-invariant subspace of $\F^{8}$ and $\overline{U}$ be an $H^T$-invariant  complement.
Assume $a^2+a+1\neq 0$.  In this case, the characteristic polynomial of $xy$ admits $(a+1)^2$ as a simple root.
The corresponding eigenspace of $xy$ and $(xy)^T$ are generated, respectively, by the vector $s=a^2 e_2+ e_3+ (a^3+a) e_{-1}$ and 
$\overline{s}=e_3+a^2e_4+(a^5+a^3)e_{-1}+a^2e_{-3}+a^4e_{-4}$.
When $a^2+a+1=0$, the eigenspace of $xy$ and $(xy)^T$ relative to $(a+1)^2$ are still one-dimensional and generated by the same vectors $s$ 
and $\overline{s}$.

If the restriction $(xy)_{|U}$ has the eigenvalue $(a+1)^2$, then the vector $s$ belongs to $U$.
Take the  matrix $M$ whose columns are the images of $s$ by
$$\I_8,\; y,\; y^2, \; xy^2, \; yxy^2,\; xyxy^2, \; (xy^2)^2, \; (xy^2)^3.$$
Since $\det(M)=\frac{(a+1)^{18}}{a^8}\neq 0$, then $U=\F^{8}$.
If the restriction $(xy)_{|U}$ does not have the eigenvalue $(a+1)^2$, then $\overline U$ contains the vector $\overline{s}$.
Take the matrix $\overline{M}$  whose columns are the images of $\overline{s}$ by
$$\I_8,\; y^T, \; (y^2)^T,\; (yxy^2)^T, \; (xy^2xy)^T, \;  ((xy^2)^3)^T, \; (y(xy^2)^3)^T, \; (y(xy^2)^4)^T.$$
Then $\det(\overline{M})=a^8 (a+1)^{22}\neq 0$, and hence $\overline{U}=\F^8$, that is, $U=\{0\}$.
\end{proof}

\subsection{Fields of odd characteristic}\label{sec2odd}

Suppose $q$ odd.
Let $a,\xi\in \F_q^*$, where $\F_p[a]=\F_q$ and $\xi$ is a nonsquare in $\F_q^*$.
Define the following elements $x=x(a,\xi)$ and $y$:
$$x=\begin{pmatrix}
0 & 0 &  0 & 0 & 2 & 0 &  0 & 0 \\
0 & 1 &  0 & 0 &  0 & 0 &  0 & 0 \\
0 & 0 & -1 & 0 &  0 & 0 &  -\frac{a^2\xi}{2} & -a\xi\\
0 & 1 &  0 &  -1 &  0 & 0 &  0 & 0 \\
\frac{1}{2} & 0 &  0 & 0 &  0 & 0 & 0 & 0 \\
0 & -\frac{1}{2} & 0 & 1 & 0 & 1 &  0 & 0 \\
0 & 0 &  0  & 0 &  0 & 0& -1 & 0 \\
0 & 0 &  0 & 0 &  0 & 0 &  a & 1
  \end{pmatrix},\quad
  y=\begin{pmatrix}
 0 & 0 & 1 & 0 & 0 & 0 & 0 & 0\\
 1 & 0 & 0 & 0 & 0 & 0 & 0 & 0\\
 0 & 1 & 0 & 0 & 0 & 0 & 0 & 0\\
 0 & 0 & 0 & 1 & 0 & 0 & 0 & 0\\
 0 & 0 & 0 & 0 & 0 & 0 & 1 & 0\\
 0 & 0 & 0 & 0 & 1 & 0 & 0 & 0\\
 0 & 0 & 0 & 0 & 0 & 1 & 0 & 0\\
 0 & 0 & 0 & 0 & 0 & 0 & 0 & 1
\end{pmatrix}.$$
Then, $x$ is an involution, $y$ has order $3$ and $\det(x)=\det(y)=1$.
Both the elements $x$ and $y$ fix the symmetric form whose Gram matrix is 
$$J=\begin{pmatrix}
0 & 0 & \I_3 & 0\\
0 & 1 & 0 & 0 \\
\I_3 & 0 & 0 & 0 \\
0 & 0 & 0 & -\xi
    \end{pmatrix} $$
of determinant $\xi$.
By \cite[Proposition 1.5.42]{Ho} the subgroup $H=\langle x,y\rangle$ is contained in 
$\SO_8^-(q)$ as $\xi$ is a nonsquare.
Furthermore, $y \in \Omega_{8}^{-}(q)$ having order $3$. 
The spinor norm of $x$ with respect to $J$ is $+1$:
if follows from \cite[Proposition 1.6.11]{Ho}(ii) that $x$ belongs to $\Omega_{8}^-(q)$. Hence,
$$H=\langle x,y\rangle\leq \Omega_8^-(q).$$

The characteristic polynomial of $xy$ is
\begin{equation}\label{charxy}
\chi_{xy}(t)= t^8 -t^6 +  \frac{a^2\xi- 4}{4} t^5 + \frac{a^2\xi+ 4}{2} t^4 + \frac{a^2\xi- 4}{4} t^3 - t^2 + 1.
\end{equation}
Note that $\chi_{xy}(1)=a^2 \xi \neq 0$.
 
\begin{lemma}\label{-irrodd}
The group $H$ is absolutely irreducible.
\end{lemma}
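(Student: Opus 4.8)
The plan is to follow the same eigenvector-based irreducibility argument that worked in Lemma \ref{irr_beta}, but now adapted to the odd-characteristic generators. The strategy rests on identifying a simple eigenvalue of $xy$, whose one-dimensional eigenspace gives a distinguished vector lying in any $H$-invariant subspace that ``sees'' that eigenvalue, and dually a distinguished vector in the $H^T$-invariant complement when it does not. Concretely, I would first factor the characteristic polynomial \eqref{charxy} of $xy$ and locate a root $\mu$ whose multiplicity is $1$, so that the associated eigenspaces $V_\mu(xy)$ and $V_\mu\bigl((xy)^T\bigr)$ are each spanned by a single explicit vector; call them $s$ and $\overline s$. As in the even case, I expect that these vectors, and hence the whole dichotomy, can be written down uniformly in $a$ and $\xi$, with perhaps one or two special values of $a$ (where extra roots coincide) requiring a separate check.

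The core of the proof is then the usual two-case split. Let $U$ be an $H$-invariant subspace and $\overline U$ an $H^T$-invariant complement, as furnished by the general remarks in Section 2. In the first case, if $(xy)_{|U}$ has the eigenvalue $\mu$, then $s\in U$; I would build an $8\times 8$ matrix $M$ whose columns are the images of $s$ under a carefully chosen list of words in $x$ and $y$ (e.g.\ $\I_8, y, y^2, xy^2,\dots$), and show $\det(M)\neq 0$, forcing $U=\F^8$. In the complementary case $\mu$ is not an eigenvalue of $(xy)_{|U}$, so $\overline U$ contains $\overline s$; a second matrix $\overline M$ of images of $\overline s$ under a suitable word list is shown to have nonzero determinant, forcing $\overline U=\F^8$, i.e.\ $U=\{0\}$. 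Since the spanning sets exhaust all of $\F^8$ in either alternative, $H$ acts irreducibly; absolute irreducibility follows because the computation is valid over the algebraic closure $\F$.

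The main obstacle is the bookkeeping: I must exhibit explicit word lists for which the two determinants are manifestly nonzero for all admissible $a$ (recall $\F_p[a]=\F_q$) and all nonsquare $\xi$, uniformly in the characteristic $p>2$. The determinants will be rational functions of $a$ and $\xi$, and the delicate point is verifying that their numerators do not vanish identically and, more importantly, do not vanish for the specific parameters in play; here the normalization $\chi_{xy}(1)=a^2\xi\neq 0$ and the nonsquare condition on $\xi$ should be exactly what rules out accidental degeneracies. I anticipate that, just as in Lemma \ref{irr_beta} the two determinants came out as clean monomials $\tfrac{(a+1)^{18}}{a^8}$ and $a^8(a+1)^{22}$, the odd-$p$ computation will yield determinants that are products of powers of $a$, $\xi$, and small integer factors, together perhaps with a factor like $(a^2+\cdots)$ that is controlled by the irreducibility hypotheses already secured for $H\leq\Omega_8^-(q)$. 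Choosing the word lists so as to make these determinants transparent is the step requiring the most care, and it is essentially a finite search guided by the structure of $y$ (a product of two $3$-cycles fixing two basis vectors) and the sparse form of $x$.
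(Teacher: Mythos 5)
There is a genuine gap, and it sits at the very first step of your plan: the simple explicit eigenvalue you intend to ``locate'' does not exist in odd characteristic. Set $c=a^2\xi$. The polynomial \eqref{charxy} is self-reciprocal, and writing $s=t+t^{-1}$ one gets $\chi_{xy}(t)=t^4g(s)$ with
$$g(s)=s^4-5s^2+\tfrac{c-4}{4}\,s+\tfrac{c+12}{2}.$$
A root of $g$ in $\F_p(c)$, written as $P/Q$ with $P,Q$ coprime polynomials in $c$, would satisfy $4P^4=20P^2Q^2-(c-4)PQ^3-2(c+12)Q^4$, forcing $Q$ to be constant; then comparing degrees (and, for constant $P=\alpha$, the coefficient of $c$, which gives $\alpha=-2$ and the contradiction $g$-value $16\neq 0$) shows no such root exists. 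Since $\F_p(a,\xi)=\F_p(c)(a)$ is purely transcendental over $\F_p(c)$, the same holds over $\F_p(a,\xi)$: no root of \eqref{charxy} is a rational expression in $a,\xi$. This is in sharp contrast with the characteristic-$2$ case, where \eqref{char1} hands you the explicit simple root $(a+1)^2\in\F_q$ and explicit eigenvectors $s,\overline{s}$ with polynomial entries; that explicitness is exactly what made the two determinant evaluations of Lemma \ref{irr_beta} possible. In odd characteristic the factorization pattern of \eqref{charxy} over $\F_q$ varies with $(q,a,\xi)$, so there is no uniform choice of $\mu$, no uniform formula for $s$ and $\overline{s}$, and hence no way even to write down your matrices $M$ and $\overline{M}$, let alone certify their determinants are nonzero. (Tellingly, the only fact the paper ever extracts from \eqref{charxy} is $\chi_{xy}(1)=a^2\xi\neq 0$, and it is used later, in Lemma \ref{S-}, not here.)

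The paper's proof therefore pivots to the other generator. It uses the eigenspace $E_\omega(y)$, which is explicitly and uniformly available because $y$ is a fixed permutation matrix: $E_\omega(y)=\langle v_1,v_2\rangle$ with $v_1=e_1+\omega^{-1}e_2+\omega e_3$ and $v_2=Jv_1$. The dichotomy then is: either $U\cap E_\omega(y)=\{0\}$, in which case $E_{\omega^{-1}}(y)\leq \overline{U}$ and one $8\times 8$ determinant, equal to $\tfrac{1}{32}a\xi(a^2\xi-4)^2(a^2\xi-16\omega^2)^2$, forces $U=\{0\}$ --- note this is where the hypothesis that $\xi$ is a nonsquare actually enters, since it rules out $a^2\xi=4$ and $a^2\xi=16\omega^2$; or else some nonzero $v=b_1v_1+b_2v_2$ lies in $U$. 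Because this eigenspace is $2$-dimensional, the second case cannot be reduced to a single distinguished vector as in your scheme: one must prove that the five vectors $v,\;xv,\;yxv,\;xy^2xv,\;(yx)^2v$ have rank $5$ for every ratio $(b_1:b_2)$, with subcases ($p=3$ versus $p\neq 3$, and the special ratios where individual minors vanish) handled by exhibiting alternative nonvanishing minors; this contradicts $\dim U\leq 4$. So the correct repair of your proposal is not a cleverer word list for $xy$, but replacing the ``simple eigenvalue of $xy$'' device by the eigenspace of $y$ and carrying out the resulting one-parameter case analysis.
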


\begin{proof}
By direct computations,  $E_\omega(y)$ is generated by the vectors
$v_i=v_i(\omega)$, $i=1,2$, where:
$$v_1=e_1+\omega^{-1} e_2+\omega e_3 \equad v_2=Jv_1=e_{-1}+\omega^{-1} e_{-2}+\omega e_{-3}.$$
Let $U$ be an $H$-invariant subspace of $\F^8$ and $\overline U$ be an $H^T$-invariant complement of $U$. 
We may assume $\dim U\leq 4$. 

If $U\cap E_\omega(y)=\{0\}$, then $E_\omega(y^T)=E_\omega(y^{-1})=E_{\omega^{-1}}(y) \leq \overline{U}$.
Writing $v_1^-=v_1(\omega^{-1})$ and $v_2^-=v_2(\omega^{-1})$, the matrix whose columns are the vectors 
$$v_1^-,\; x^Tv_1^-,\; (xy)^Tv_1^-,\; (xyx)^T v_1^-,\; ((xy)^2)^T v_1^-,\; ((xy)^2x)^T v_1^-,\;v_2^-,\; x^Tv_2^-$$
has determinant $\frac{1}{32} a\xi (a^2\xi-4)^2 (a^2\xi-16\omega^2)^2\neq 0$. This means that $U=\{0\}$.

So, for some $(b_1,b_2)\neq (0,0)$, we have $v=b_1 v_1 + b_2 v_2 \in U$.
Let $M$ be the matrix whose columns are 
$$v,\; xv,\; yx v,\; xy^2x v,\; (yx)^2 v.$$
We will show that $\rk(M)=5$, a contradiction as we are assuming $\dim(U)\leq 4$. 

Suppose first $b_2=0$, $b_1=1$. We have $\det(M_{(1, 2, 5, 7, 8)})=-\frac{a}{32}(a^2\xi-4)\neq 0$.
Now, suppose $b_2=1$. 
If $p=3$,  then 
$\det(M_{(1,2,3,4,8)})= -a b_1 (a^2\xi-1)^3$, which is nonzero
unless $b_1=0$. In this case, 
$\det(M_{(3, 4, 6, 7, 8 ) })= a^5\xi^2 (a^2\xi-1)\neq 0$.

If $p\neq 3$, then $\det(M_{(4, 5, 6, 7, 8)})=
\frac{2\omega + 1}{16}  a(a^2\xi-4)   (b_1+2\omega) (4(\omega + 2) b_1+ a^2\xi - 4)$
which is nonzero unless either (i) $b_1=-2\omega$  or (ii) $b_1=\frac{a^2\xi-4}{4(\omega^2-1)}$.
In case (i), we have $\det(M_{(2, 5, 6, 7, 8 )})= 2(\omega+2) a (a^2\xi-4)\neq 0$.
In case (ii), we have $\det(M_{( 1, 2, 3, 4, 8)})=\frac{\omega - 1}{2^6\cdot 3} a (a^2\xi-4)^2 (a^2\xi - 8\omega + 4)^2$,
which is nonzero unless $\xi=4a^{-2}(2\omega-1)$. This gives $\det(M_{( 2,5,6,7,8})=-48a \neq 0$.
\end{proof}

\begin{lemma}\label{-C5odd}
Suppose that $\F_p[a^2\xi]=\F_q$. Then,  $H$ is not contained in any maximal subgroup of class $\mathcal{C}_5$.
\end{lemma}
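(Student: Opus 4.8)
The plan is to follow the template established by Lemma~\ref{+C5} for the plus-type case, adapting it to the minus-type generators $x=x(a,\xi)$ and $y$ defined in this subsection. The class $\mathcal{C}_5$ consists of subfield subgroups, so if $H$ were contained in some $M\in\mathcal{C}_5$, there would exist $g\in\GL_8(\F)$ and scalars $\vartheta_1,\vartheta_2\in\F$ such that $x^g=\vartheta_1 x_0$ and $y^g=\vartheta_2 y_0$ with $x_0,y_0\in\GL_8(q_0)$ for some proper subfield $\F_{q_0}<\F_q$. The key observation is that a suitable trace is conjugation-invariant up to the scalars $\vartheta_i$, which must cancel in an appropriately chosen expression, forcing $a^2\xi$ to lie in $\F_{q_0}$.

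First I would compute $\tr([x,y])$, or more likely a commutator-type trace that kills the scalar ambiguity: since $[x^g,y^g]=[x_0,y_0]$ (the scalars cancel in the commutator), we have $\tr([x,y])=\tr([x^g,y^g])=\tr([x_0,y_0])\in\F_{q_0}$. From the characteristic polynomial \eqref{charxy} and the explicit matrices one expects $\tr([x,y])$ to be an explicit rational expression in $a^2\xi$ — most plausibly $\tr([x,y])=a^2\xi$ plus a constant, or a simple function thereof. The crucial point is that this trace determines $a^2\xi$. Then from the hypothesis $\F_p[a^2\xi]=\F_q$ we get $\F_q=\F_p[a^2\xi]\leq\F_{q_0}$, contradicting $q_0<q$.

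The main obstacle, and the only real computation, is verifying that the chosen trace invariant is genuinely a generator of $\F_q$ under the stated hypothesis $\F_p[a^2\xi]=\F_q$ — that is, isolating a conjugation-invariant quantity that equals (a field-generating function of) $a^2\xi$ rather than of $a$ or $\xi$ separately. This is why the lemma assumes $\F_p[a^2\xi]=\F_q$ rather than merely $\F_p[a]=\F_q$: the form $J$ has determinant $\xi$ and the natural invariants of the pair $(x,y)$ see $a$ and $\xi$ only through the combination $a^2\xi$, as already visible in $\chi_{xy}(1)=a^2\xi$ and in the determinants appearing in the proof of Lemma~\ref{-irrodd}. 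I would therefore select the trace expression so that its dependence on $(a,\xi)$ factors precisely through $a^2\xi$, and then the subfield argument closes immediately.

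\smallskip
\noindent\textbf{Note.} If the commutator trace turns out to depend on $a$ and $\xi$ in a way that does not reduce to $a^2\xi$, the fallback is to use a higher power or a product such as $\tr((xy)^k)$ for a small $k$, reading off the relevant coefficient of $\chi_{xy}(t)$ in \eqref{charxy}, all of whose nontrivial coefficients are visibly functions of $a^2\xi$ alone; this makes the reduction to $a^2\xi\in\F_{q_0}$ essentially automatic and confirms that the hypothesis $\F_p[a^2\xi]=\F_q$ is exactly what is needed.
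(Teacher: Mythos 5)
Your overall strategy (a conjugation-invariant trace expression combined with the hypothesis $\F_p[a^2\xi]=\F_q$) is the paper's, but your primary invariant fails concretely: with the matrices of Section \ref{sec2odd}, a direct computation gives $\tr([x,y])=-1$. Indeed, writing $[x,y]=(xy^2)(xy)$, the diagonal entries of this product are $-1,-1,-\tfrac{a^2\xi}{4},1,0,\tfrac{a^2\xi}{4}-1,0,1$, so the two contributions involving $a^2\xi$ cancel and the commutator trace is a constant. Thus the step ``this trace determines $a^2\xi$'' is false for your chosen expression, and no contradiction can be extracted from it: the commutator, despite its pleasant scalar-cancellation property, simply does not see the parameters here.

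What rescues you is your fallback, which is essentially the paper's actual proof: every nontrivial coefficient of $\chi_{xy}(t)$ in \eqref{charxy} depends on $(a,\xi)$ only through $a^2\xi$, and the paper uses the power sum $\tr((xy)^4)=-2a^2\xi-6$, obtainable from \eqref{charxy} by Newton's identities. But note that in passing from the commutator to a power of $xy$ you lose exactly the scalar cancellation you invoked: $(x^gy^g)^k=(\vartheta_1\vartheta_2)^k(x_0y_0)^k$, so you must still argue that $(\vartheta_1\vartheta_2)^k\in\F_{q_0}$. This forces a choice of even $k$ and a short supplementary argument: from $x^2=\I_8$ and $y^3=\I_8$ one gets $x_0^2=\vartheta_1^{-2}\I_8$ and $y_0^3=\vartheta_2^{-3}\I_8$, hence $\vartheta_1^2,\vartheta_2^3\in\F_{q_0}$; combining $\vartheta_2^3\in\F_{q_0}$ with $\vartheta_2^8\in\F_{q_0}$ (from $\det(y)=1$ and $\det(y_0)\in\F_{q_0}$) gives $\vartheta_2\in\F_{q_0}$, and then $(\vartheta_1\vartheta_2)^4=(\vartheta_1^2)^2\vartheta_2^4\in\F_{q_0}$. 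With $k=4$ the argument then closes as you describe: $-2a^2\xi-6\in\F_{q_0}$ gives $a^2\xi\in\F_{q_0}$ (as $p$ is odd), whence $\F_q=\F_p[a^2\xi]\leq\F_{q_0}$, contradicting $q_0<q$. An odd power such as $\tr(xy)$ would leave a possible factor $\vartheta_1\notin\F_{q_0}$ unaccounted for; the paper itself takes $k=4$, though it writes the scalar factor loosely as $\vartheta_1\vartheta_2$ rather than $(\vartheta_1\vartheta_2)^4$.
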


\begin{proof}
Suppose the contrary. Then there exists $g\in \GL_{8}(\F)$ such that
$x^g=\vartheta_1 x_0$, $y^g=\vartheta_2 y_0$, with $x_0,y_0\in \GL_{8}(q_0)$ and $\vartheta_i\in \F$.
From  $-2a^2\xi - 6=\tr ((xy)^4)=\tr\left((x^gy^g)^4\right)= \vartheta_1\vartheta_2 \tr ( (x_0y_0)^4)$,
it follows that 
$a^2\xi \in \F_{q_0}$. So,  $\F_q=\F_p[a^2\xi]\leq \F_{q_0}$ implies $q_0=q$.
\end{proof}

\section{The $(2,3)$-generation of $\Omega_8^-(q)$}

\begin{lemma}\label{S-}
The group $H$ is not contained in any maximal subgroup of class $\mathcal{S}$.
\end{lemma}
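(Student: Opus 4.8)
The plan is to show that $H=\langle x,y\rangle\leq\Omega_8^-(q)$ with $q$ odd cannot lie in any maximal subgroup of the Aschbacher class $\mathcal{S}$ by exhausting the short list of such subgroups provided by \cite[Tables 8.52--8.53]{Ho}, and ruling out each possibility with an order-theoretic or trace-theoretic invariant. Since Lemma \ref{-irrodd} already guarantees that $H$ is absolutely irreducible, $H$ cannot be contained in any imprimitive, field-extension, or reducible subgroup, so only the genuinely almost-simple members of $\mathcal{S}$ remain to be excluded. For the minus-type $8$-dimensional groups the relevant $\mathcal{S}$-candidates are few: one expects subfield-type subgroups $\Omega_8^-(q_0)$ (with $q=q_0^k$), twisted groups such as ${}^3D_4(q_0)$ or ${}^2\!B_2$-related configurations when they embed, the spin/triality images of $\PSL_3$ or $\PSU_3$ analogous to those handled for the plus type, and a handful of sporadic almost-simple groups occurring only for very small $q$.

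First I would organize the argument by splitting the $\mathcal{S}$-list into the generic families and the finitely many exceptional small-$q$ entries. For the generic families I would reuse exactly the two invariants that were effective in the plus-type analysis: the characteristic polynomial of $xy$ computed in \eqref{charxy}, and the quantity $a^2\xi$, which by hypothesis satisfies $\F_p[a^2\xi]=\F_q$ (this is the same field-generation condition exploited in Lemma \ref{-C5odd}). Concretely, for a subfield candidate $\Omega_8^-(q_0)$ with $q_0<q$, the trace $\tr((xy)^4)=-2a^2\xi-6$ forces $a^2\xi\in\F_{q_0}$, hence $\F_q=\F_p[a^2\xi]\leq\F_{q_0}$, a contradiction exactly as in the proof of Lemma \ref{-C5odd}. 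For a twisted group of type ${}^3D_4(q_0)$ with $q=q_0^3$, I would equate coefficients of $\chi_{xy}(t)$ against its Frobenius-twisted image, as was done in Lemma \ref{3D4} for the plus type, to extract a relation of the form $a^{2q_0}=-2a^2$ or similar and thereby a contradiction on the order of $a$ in $\F_q^*$.

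For the spin images of $d\times\PSL_3(q).3$ and $d\times\PSU_3(q^2).3$ I would invoke the adjoint-module construction already set up before Remark \ref{poli}: every element in the image of $\SL_3$ or $\SU_3$ under the adjoint representation $\Phi$ fixes a nonzero vector, i.e. admits the eigenvalue $1$. I would therefore compute $\chi_{xy}(1)$ or $\chi_{[x,y]}(1)$ for the minus-type generators and verify it is nonzero; indeed we already have $\chi_{xy}(1)=a^2\xi\neq 0$, which shows $xy$ has no eigenvalue $1$, contradicting membership in such an $\mathcal{S}$-subgroup (after applying the Frobenius/eigenvalue argument of Lemma \ref{LU3}). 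The main obstacle I anticipate is the finite list of sporadic almost-simple $\mathcal{S}$-subgroups arising only for small $q$ (analogues of the $2^{\cdot}\Omega_8^+(2)$, $2^{\cdot}Sz(8)$, $\Alt(10)$ entries seen in Lemma \ref{+C2}): these cannot be dispatched by generic field arguments and instead require explicitly computing the orders of a few words in $x,y$ — such as $[x,y]^j\,xyx$ or $\eta\,y^2xy$ — and checking, via \cite{Atlas}, that some $\Upsilon$-value fails to divide the order of the candidate subgroup. Handling these cleanly, case by case for each offending $q$, is where the bulk of the careful bookkeeping will be, and I would present it as a short table of witness elements and their orders.
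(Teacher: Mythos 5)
Your core argument --- excluding the adjoint-module subgroups $\PSL_3(q)$ and $\PSU_3(q^2)$ because every element of their image under $\Phi$ fixes a nonzero vector, while $\chi_{xy}(1)\neq 0$ --- is exactly the paper's proof of this lemma. The problem is everything else you plan to do. By \cite[Tables 8.52--8.53]{Ho}, the class $\mathcal{S}$ for $\Omega_8^-(q)$ is nonempty only when $p\neq 3$, and then it consists \emph{solely} of subgroups of type $\PSL_3(q)$ or $\PSU_3(q^2)$. There are no subfield groups in $\mathcal{S}$ (subfield subgroups $\Omega_8^-(q_0)$ belong to class $\mathcal{C}_5$ and are handled by Lemma \ref{-C5odd} and inside the proof of Theorem \ref{main-2}), no ${}^3D_4(q_0)$, and no sporadic entries such as $2^\cdot Sz(8)$, $2\times\Alt(10)$ or $2^\cdot\Omega_8^+(2)$: all of those occur only for the plus type, where triality produces additional $\mathcal{S}$-subgroups, and they have no analogue in minus type. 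Consequently the part of your plan that you describe as ``the bulk of the careful bookkeeping'' --- order computations of words in $x,y$ against hypothetical small-$q$ almost simple subgroups, to be checked in \cite{Atlas} --- is vacuous; but since you defer it as an unfulfilled promise rather than either carrying it out or observing that the list is empty, your proof is incomplete as written. The fix is bibliographic, not computational: one consults the tables and the whole lemma collapses to the eigenvalue-$1$ argument you already gave.

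A second, smaller gap: you restrict from the outset to $q$ odd. The lemma is also invoked for even $q$ (Theorem \ref{main-2} uses it to exclude class $\mathcal{S}$ for $q\geq 4$ even), and the paper's proof covers both parities by citing both \eqref{char1}, which gives $\chi_{xy}(1)=\frac{a^4}{(a+1)^4}\neq 0$ for the even-characteristic generators, and \eqref{charxy}, which gives $\chi_{xy}(1)=a^2\xi\neq 0$ in odd characteristic. Your argument extends verbatim to the even case, but as stated it does not cover it.
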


\begin{proof}
The class $\mathcal{S}$ is nonempty only when $p\neq 3$, in which case it contains 
only maximal subgroups isomorphic either to $\PSL_3(q)$
or to $\PSU_3(q^2)$. 
Arguing as in Lemma \ref{LU3}, we can exclude these subgroups since $xy$ does not have the eigenvalue $1$,
see  \eqref{char1} and \eqref{charxy}.
\end{proof}

\begin{theorem}\label{main-2}
Let $q\geq 4$ be even, and let $a \in \F_q$ be such that $\F_2[a]=\F_q$.
If the polynomial $t^2+t+(a+1)^4$ is irreducible, then $H=\Omega_8^-(q)$. In particular,  $\Omega_8^-(q)$ is $(2,3)$-generated for
all even $q\geq 2$.
\end{theorem}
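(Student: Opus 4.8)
My plan is to follow the scheme of Theorem \ref{main+}. I fix an even $q\geq 4$ and choose $a$ as in Lemma \ref{pol_irr}, so that $\F_2[a]=\F_q$ and $t^2+t+(a+1)^4$ is irreducible; by the computation preceding that lemma this already gives $H=\langle x,y\rangle\leq \Omega_8^-(q)$. It then suffices to check that $H$ is contained in no maximal subgroup of $\Omega_8^-(q)$, going through the Aschbacher classes of \cite[Tables 8.52--8.53]{Ho}. By Lemma \ref{irr_beta}, $H$ is absolutely irreducible, so it lies in no member of $\mathcal{C}_1$, all of which are reducible (including, in even characteristic, the symplectic-type stabilisers $\Sp_6(q)$ of nonsingular points); and class $\mathcal{S}$ is excluded by Lemma \ref{S-}. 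Inspecting the same tables, the classes $\mathcal{C}_2$, $\mathcal{C}_4$ and $\mathcal{C}_6$ are empty for $\Omega_8^-(q)$ in even characteristic: a transitive orthogonal decomposition into isometric nondegenerate summands and the only admissible tensor decomposition, of shape $2\otimes 4$, both carry plus type; a totally singular decomposition would require Witt index $4$; and the extraspecial normalisers of $\mathcal{C}_6$ require $q$ odd. Thus only the field-extension class $\mathcal{C}_3$ and the subfield class $\mathcal{C}_5$ remain to be treated directly.

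For $\mathcal{C}_5$ I argue as in Lemmas \ref{+C5} and \ref{-C5odd}. A subfield subgroup would realise $x,y$ over some $\F_{q_0}$ with $q=q_0^k$; since $\Omega_8^-(q)$ has trivial centre in even characteristic there is no scalar to correct, so the conjugacy invariant $\tr(xy)=(a+1)^2$ must lie in $\F_{q_0}$. Because the Frobenius $z\mapsto z^2$ is an automorphism of $\F_q$ fixing $\F_2$, one has $\F_2[(a+1)^2]=\F_2[a+1]=\F_2[a]=\F_q$, so $\F_q\leq\F_{q_0}$ forces $q_0=q$, a contradiction.

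The main obstacle is class $\mathcal{C}_3$, which for $\Omega_8^-(q)$ is nonempty and consists of the field-extension subgroups of types $\Omega_4^-(q^2).2$ and $\Omega_2^-(q^4).4$ (degrees $r=2$ and $r=4$; the unitary and plus-type extensions all land in $\Omega_8^+(q)$). Here I exploit the factorisation \eqref{char1}: since $a\neq 0,1$, the eigenvalue $a^2+1\in\F_q^*$ of $xy$ does not coincide with $(a^2+1)^{-1}$, nor with a root of $t^2+(a^2+1)^{-1}t+1$ or of $t^2+t+1$ (the last coincidence forcing $q=4$), so it is a \emph{simple} eigenvalue. The guiding principle is that an element acting $\F_{q^r}$-linearly has every $\F_q$-rational eigenvalue with $\F_q$-multiplicity divisible by $r$, because multiplication by $\nu\in\F_q$ on an $\F_{q^r}$-line contributes $\nu$ to the $\F_q$-spectrum with multiplicity $r$. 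A suitable power $(xy)^r$ lies in the $\F_{q^r}$-linear part of a degree-$r$ field-extension subgroup and still has the simple eigenvalue $(a^2+1)^r\in\F_q^*$ for all $q>4$; since multiplicity one is not divisible by $r\in\{2,4\}$, no containment occurs. The finitely many remaining values (here $q=4$) I would settle by direct computation.

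Having excluded all maximal subgroups, I would conclude $H=\Omega_8^-(q)$ for every even $q\geq 4$, the required $a$ being supplied by Lemma \ref{pol_irr}. As $x$ is an involution and $y$ has order $3$, this shows $\Omega_8^-(q)$ is $(2,3)$-generated for all even $q\geq 4$, while $q=2$ is Lemma \ref{q2}; since $\Omega_8^-(q)=\P\Omega_8^-(q)$ in even characteristic, the same conclusion holds for the simple group, completing the proof.
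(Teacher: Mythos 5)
Your proposal follows the paper's architecture for this theorem --- absolute irreducibility via Lemma \ref{irr_beta}, class $\mathcal{S}$ via Lemma \ref{S-}, class $\mathcal{C}_5$ via the trace of $xy$, and Lemmas \ref{pol_irr} and \ref{q2} for the final assertion --- but it departs from the paper in one substantive way: you explicitly exclude the field-extension class $\mathcal{C}_3$, which the paper's own proof never mentions (it passes directly from absolute irreducibility to ``$M$ belongs to $\mathcal{C}_5$ or $\mathcal{S}$''). Your caution is justified, and this step is genuinely needed: by \cite[Table 8.52]{Ho} the class $\mathcal{C}_3$ of $\Omega_8^-(q)$ is nonempty, containing a maximal subgroup of type $\Omega_4^-(q^2).2$ (for $q=2$ this is the maximal subgroup $\PSL_2(16){:}4$ of $\Omega_8^-(2)$ listed in \cite{Atlas}), and the full subgroup of this type \emph{is} absolutely irreducible, since the Galois element on top interchanges the two non-isomorphic Frobenius-twisted $4$-dimensional constituents of its $\F_{q^2}$-linear part; so Lemma \ref{irr_beta} alone cannot dispose of it. Your mechanism is sound: if $H\leq M\in\mathcal{C}_3$ of degree $r$, then $(xy)^r$ lies in the $\F_{q^r}$-linear part $M_0$ of $M$; the $\F_q$-characteristic polynomial of an $\F_{q^r}$-linear element is the product of the $r$ Galois conjugates of its $\F_{q^r}$-characteristic polynomial, so each $\F_q$-rational eigenvalue has multiplicity divisible by $r$; and since squaring is injective in characteristic $2$, the simple root $a^2+1$ of \eqref{char1} makes $(a^2+1)^r$ a simple $\F_q$-rational eigenvalue of $(xy)^r$. (Only $r=2$ actually requires treatment: $\mathcal{C}_3$ subgroups have prime degree, and your ``$\Omega_2^-(q^4).4$'' lies inside the degree-$2$ subgroup, so it is not a separate maximal subgroup.)

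The one genuine gap is $q=4$. The values $a\in\{\omega,\omega^2\}$ do satisfy the hypotheses of the theorem (e.g.\ $t^2+t+(\omega+1)^4=t^2+t+\omega^2$ is irreducible over $\F_4$), so this case cannot be left to an unspecified ``direct computation''. It can be closed in two ways. First, inside your own argument: for $q=4$ one has $a^2+1=a\in\{\omega,\omega^2\}$, which is also a root of $(t^2+t+1)^2$, hence an eigenvalue of $xy$ of algebraic multiplicity $3$; consequently $(xy)^2$ has the $\F_4$-rational eigenvalues $\omega$ and $\omega^2$ each with multiplicity $3$, still odd, and your divisibility principle applies verbatim. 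Second, and more uniformly for all even $q\geq 4$: since $y$ has odd order, $y\in M_0$; since $H$ is absolutely irreducible, $H\not\leq M_0$, so $x\notin M_0$ and hence $xy\notin M_0$ is properly $\sigma$-semilinear; but a properly semilinear map, viewed over $\F_q$, has trace $0$ (over $\overline{\F}_q$ it interchanges the two summands of the restriction-of-scalars decomposition, so its matrix is block-antidiagonal), contradicting $\tr(xy)=(a+1)^2\neq 0$. Either repair makes your proof complete; with it, your argument is not only correct but supplies a justification that the paper itself leaves implicit.
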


\begin{proof}
By the considerations of Section \ref{sec2} and Lemma \ref{irr_beta}, $H$ is an absolutely irreducible subgroup of $\Omega_8^-(q)$.
If our claim is false, there exists a maximal subgroup $M$ of $\Omega_8^-(q)$ which contains $H$.
Then, $M$ must belong either to the class $\mathcal{C}_5$ or to the class $\mathcal{S}$.
The second possibility is excluded by Lemma \ref{S-}.
Hence, $M=\Omega_8^-(q_0)$ for some $q_0$ such that $q=q_0^r$ with $r$ an odd prime.
However, also this possibility can be easily excluded, as $\tr(xy)=a^2+1$ and $\F_2[a]=\F_q$.

For the second part of the statement, if $q=2$ we apply Lemma \ref{q2}.
If $q\geq 4$, there exists an element $a\in \F_q^*$ satisfying all the requirements  by Lemma \ref{pol_irr}.
\end{proof}

\begin{theorem}\label{main-odd}
Let $q\geq 3$ be odd.
Let $\xi$ be a nonsquare in $\F_q^*$ and let $a \in \F_q^*$ be such that $\F_p[a^2\xi]=\F_q$.
Then $H=\Omega_8^-(q)$. In particular, $\Omega_8^-(q)$ is $(2,3)$-generated for
all odd $q\geq 3$.
\end{theorem}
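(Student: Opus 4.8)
The plan is to follow the same Aschbacher-class elimination strategy already established for $\Omega_8^+(q)$ in Theorem \ref{main+} and for even $\Omega_8^-(q)$ in Theorem \ref{main-2}, but now adapted to the odd characteristic minus-type case with the generators defined in Section \ref{sec2odd}. The starting point is that, by the explicit construction in Section \ref{sec2odd}, we already know $H=\langle x,y\rangle \leq \Omega_8^-(q)$, and by Lemma \ref{-irrodd} the group $H$ is absolutely irreducible. So if $H\neq \Omega_8^-(q)$, then $H$ must be contained in some maximal subgroup $M$ of $\Omega_8^-(q)$, and the task is to run through the list of possible classes $\mathcal{C}_1,\ldots,\mathcal{C}_6,\mathcal{S}$ from \cite[Tables 8.52--8.53]{Ho} and rule each one out.

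First I would dispose of the reducible classes: absolute irreducibility (Lemma \ref{-irrodd}) immediately kills $\mathcal{C}_1$ and, via the tensor-decomposition obstruction, the imprimitive/tensor classes as well. The class $\mathcal{C}_5$ (subfield subgroups) is excluded by Lemma \ref{-C5odd}, provided the hypothesis $\F_p[a^2\xi]=\F_q$ holds, which is exactly the standing assumption of the theorem. The class $\mathcal{S}$ is handled by Lemma \ref{S-}, which uses that $\chi_{xy}(1)=a^2\xi\neq 0$ from \eqref{charxy}, so $xy$ has no eigenvalue $1$ and cannot lie in the relevant $\PSL_3$ or $\PSU_3$ subgroups. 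What remains to be addressed in the body of the proof are the classes not yet covered by a dedicated lemma, namely the geometric imprimitivity classes $\mathcal{C}_2$ and $\mathcal{C}_3$ (and $\mathcal{C}_4$, $\mathcal{C}_6$ where they arise for the minus-type group). For these I would mimic the argument of Lemma \ref{+C2}: compute the characteristic polynomial of $xy$, already available in \eqref{charxy}, and exploit its shape together with the orders of elements such as suitable words $[x,y]^j xyx$ to contradict the order of $M$, or use trace conditions to contradict a block decomposition.

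The key computational inputs are all in hand: the characteristic polynomial $\chi_{xy}(t)$ of \eqref{charxy}, the eigenvector data for $E_\omega(y)$ from Lemma \ref{-irrodd}, and the nonvanishing $\chi_{xy}(1)=a^2\xi$. For the monomial/imprimitive subcases I expect the decisive step to be a trace comparison: if $H$ preserved a decomposition into blocks permuted by $y$ (acting as a $3$-cycle plus a fixed block, by rational-form considerations on the order-$3$ element), then the constraints on $\tr(xy)$ and higher traces $\tr((xy)^k)$ forced by the block structure would clash with the explicit coefficients of \eqref{charxy}, yielding $a=0$ or $\xi$ a square, each a contradiction. For subfield-type $\mathcal{S}$-members of shape $\Omega_8^-(q_0)$ with $q=q_0^r$, $r$ odd prime, the argument is the one used at the end of Theorem \ref{main-2}: the hypothesis $\F_p[a^2\xi]=\F_q$ forbids descent to a proper subfield.

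The main obstacle I anticipate is verifying that the specific maximal subgroups of class $\mathcal{C}_2,\mathcal{C}_3$ for the \emph{minus}-type form genuinely fail, since the minus-type geometry admits decompositions of a slightly different flavor than the plus-type case (for instance field-extension subgroups $\Or_4^-(q^2)$ or $\GU_4(q)$-type stabilizers in $\mathcal{C}_3$), and the trace-polynomial method must be checked to give a contradiction uniformly for every admissible $a$ and nonsquare $\xi$ with $q\geq 3$. I would therefore pay closest attention to ensuring that the exceptional small-order phenomena (analogous to the $q\in\{5,7\}$ analysis in Lemma \ref{+C2}) are fully accounted for, exhibiting concrete element orders that exceed those possible in any candidate $M$; once those low-dimensional coincidences are cleared, the generic argument via $\chi_{xy}$ and the standing field-generation hypothesis closes the remaining cases and forces $H=\Omega_8^-(q)$, whence the $(2,3)$-generation of $\Omega_8^-(q)$ for all odd $q\geq 3$ follows, and combined with Theorem \ref{main-2} and Lemma \ref{q2}, for all $q\geq 2$.
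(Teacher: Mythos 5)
Your proposal is correct and its skeleton is exactly the paper's: $H\leq\Omega_8^-(q)$ and absolute irreducibility come from Section \ref{sec2odd} and Lemma \ref{-irrodd}, class $\mathcal{C}_5$ is killed by Lemma \ref{-C5odd} using $\F_p[a^2\xi]=\F_q$, and class $\mathcal{S}$ by Lemma \ref{S-}. Where you diverge is in the middle of the subgroup list, and the comparison cuts both ways. The paper's proof is three lines long because \cite[Tables 8.52--8.53]{Ho} show that for the minus-type group in dimension $8$ the classes $\mathcal{C}_2$, $\mathcal{C}_4$, $\mathcal{C}_6$ are empty (an orthogonal decomposition into isometric nondegenerate summands, a tensor decomposition, or an extraspecial normalizer all force a plus-type form), that $\mathcal{S}$ contains only $\PSL_3(q)$ and $\PSU_3(q^2)$ --- no unitary groups $\U_4(q)$ (these preserve plus-type forms), and no analogues of the $2^\cdot\Omega_8^+(2)$, $2^\cdot Sz(8)$, $\Alt(10)$ cases of Lemma \ref{+C2}, hence no small-$q$ element-order analysis --- and that the groups $\Omega_8^-(q_0)$ with $q=q_0^r$, $r$ an odd prime, constitute $\mathcal{C}_5$ (they are not members of $\mathcal{S}$, as you label them). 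So most of the extra work you plan would evaporate upon consulting the tables. The exception is $\mathcal{C}_3$, and there your caution is better than the paper's brevity: this class is nonempty, containing the stabilizer of an $\F_{q^2}$-structure, of type $O_4^-(q^2)$ (essentially $\PSL_2(q^4)$ extended by the Galois automorphism), and absolute irreducibility of $H$ alone does \emph{not} exclude it, since the Galois coset swaps the two Frobenius-twisted $4$-spaces over $\F$ and the full subgroup is therefore itself absolutely irreducible; the paper's assertion that $M$ must lie in $\mathcal{C}_5\cup\mathcal{S}$ passes over this case silently. Your trace-versus-block-structure idea is precisely what fills it: if $H\leq M\in\mathcal{C}_3$, then either $H$ lies in the $\F_{q^2}$-linear part, contradicting Lemma \ref{-irrodd}, or every word in $x,y$ with an odd number of occurrences of $x$ has trace zero; but \eqref{charxy} gives $\tr((xy)^3)=-\frac{3}{4}(a^2\xi-4)$ and $\tr((xy)^5)=-\frac{5}{2}(a^2\xi-4)$, which vanish simultaneously only if $a^2\xi=4$, i.e.\ only if $\xi$ is a square --- exactly the contradiction you predicted. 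In short, your route is costlier than the paper's table lookup, but it is sound, and on $\mathcal{C}_3$ it supplies a justification that the paper's own proof leaves implicit.
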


\begin{proof}
Take $a,\xi$ as in the statement (for instance, $a=1$ and any generator $\xi$ of the 
cyclic group $\F_q^*$).
By the considerations of Section \ref{sec2odd} and Lemma \ref{-irrodd}, $H$ is an absolutely irreducible subgroup of
$\Omega_8^-(q)$.
If our claim is false, there exists a maximal subgroup $M$ of $\Omega_8^-(q)$ which contains $H$.
So, $M$ must belong either to the class $\mathcal{C}_5$ or to the class $\mathcal{S}$.
However, these possibilities are excluded by Lemma  \ref{-C5odd}  and Lemma \ref{S-}, respectively.
\end{proof}

\end{document}